\documentclass[11pt]{amsart}
\usepackage[utf8]{inputenc}
\usepackage[T1]{fontenc}
\usepackage{amsmath,amssymb,amsthm}
\usepackage[margin=1.15in]{geometry}
\usepackage[colorlinks=true,linkcolor=blue,citecolor=blue,urlcolor=blue]{hyperref}

\newtheorem{theorem}{Theorem}[section]
\newtheorem{proposition}[theorem]{Proposition}
\newtheorem{lemma}[theorem]{Lemma}
\newtheorem{corollary}[theorem]{Corollary}
\newtheorem*{theoremA}{Theorem A}
\newtheorem*{theoremB}{Theorem B}
\newtheorem*{theoremC}{Theorem C}
\theoremstyle{definition}
\newtheorem{definition}[theorem]{Definition}
\newtheorem{example}[theorem]{Example}
\newtheorem{question}[theorem]{Question}
\theoremstyle{remark}
\newtheorem{remark}[theorem]{Remark}

\newcommand{\C}{\mathbb C}
\newcommand{\R}{\mathbb R}
\newcommand{\Z}{\mathbb Z}
\newcommand{\PP}{\mathbb P}
\newcommand{\cS}{\mathcal S}
\newcommand{\cF}{\mathcal F}
\DeclareMathOperator{\Ad}{Ad}
\DeclareMathOperator{\diag}{diag}
\DeclareMathOperator{\Gr}{Gr}
\DeclareMathOperator{\Poly}{Poly}

\title{Non-abelian LVM manifolds}
\author{Jos\'e Seade}
\address{Instituto de Matem\'aticas-Cuernavaca, Universidad Nacional Aut\'onoma de
M\'exico.}
\email{jseade@im.unam.mx}
\subjclass[2020]{Primary 32J18, 32M25; Secondary 32M05, 32Q55, 32L05,
32M10, 32M18, 32J10, 53D20, 14L24, 14M15, 57R30}
\keywords{LVM manifolds, Samelson groups, holomorphic foliations,
tangency loci, transversely holomorphic structures, moment maps}
\date{}

\begin{document}

\begin{abstract}
We extend the tangency construction of L\'opez de Medrano, Verjovsky
and Meersseman from diagonal abelian actions to non-abelian Samelson
groups. For these groups the tangency locus is a moment-map zero set:
the commutator term in the leafwise Hessian vanishes there, and
under an admissibility hypothesis, Kempf--Ness convexity gives a global
transversal whose projectivization is a compact complex manifold. The construction includes compact
rotation groups and groups with non-closed central rotations.
In the latter setting we construct explicit families of complex
dimension $2m-3$, $m\geq5$, which are $4$-connected and admit no
symplectic form. Their meromorphic function fields are rational of
transcendence degree $m-2$, although every holomorphic map to a
projective space is constant. We compute their connected automorphism
groups and show that they admit no maximal torus action. They are
Hopf-surface bundles over compact complex quotients of stable
Grassmannian loci; these bases give related examples already in
complex dimension five. The families demonstrate that the non-abelian
tangency construction reaches beyond compact complex manifolds with
maximal torus actions.
\end{abstract}

\maketitle

\section{Introduction}

The LVM manifolds were introduced by L\'opez de Medrano and Verjovsky
\cite{LdMV} and, in full generality, by Meersseman \cite{M}. One
starts from a diagonal linear action of $\C^k$ on $\C^n$ and looks at
the points where its orbits are tangent to the round spheres centered at the origin. Under
the \emph{admissibility} condition of Meersseman \cite{M}, recalled in
Definition~\ref{def:Meersseman-admissible}, this tangency locus is a
smooth global transversal to the foliation by orbits, and the leaves
on which the norm has a non-degenerate minimum are exactly the Siegel
leaves of Camacho, Kuiper and Palis \cite{CKP} and L\'opez de Medrano
\cite{LdM-Siegel}. The LVM manifold is the projectivized manifold of
tangencies, the quotient of the tangency locus by the canonical action
of $\C^*$: a compact real submanifold of $\PP^{n-1}_{\mathbb C}$ which meets each
leaf of the projectivized foliation exactly once, and therefore
inherits the transverse complex structure of the foliation.

The construction is remarkable for its simplicity: a single datum of
linear algebra produces compact complex manifolds, usually
non-K\"ahler, whose topology is governed by convex polytopes
\cite{MV,BM,Verjovsky-survey}. It was extended by Bosio and Meersseman
to the LVMB manifolds \cite{Bosio,BM}, and related to geometric
invariant theory and toric geometry by Meersseman--Verjovsky \cite{MV},
Cupit-Foutou--Zaffran \cite{CFZ}, Battisti \cite{Battisti} and others;
its irrational versions underlie the quantum toric geometry of
\cite{KLMV1,KLMV2}. Ishida \cite{Ishida} classified the compact complex
manifolds admitting a maximal torus action, a class which contains the
LVM and LVMB manifolds. In all these developments the acting group is
abelian.

It is therefore natural to ask whether the diagonal group $\C^k$ can be replaced
by a non-abelian group of triangular matrices. In general the answer is
negative, because of a commutator term. Write an element of the Lie
algebra of the acting group as $B=H_B+K_B$, with $H_B$ Hermitian (the
dilation part) and $K_B$ skew-Hermitian (the rotation part). The second
derivative of $\|e^{tB}z\|^2$ at $t=0$ is
$$
4\|H_Bz\|^2+2\langle[H_B,K_B]z,z\rangle .
$$
In the diagonal case $H_B$ and $K_B$ commute, so only the non-negative
first term remains, at every point of every leaf, and the whole LVM
construction rests on this convexity. If $[H_B,K_B]\neq0$, it is a
Hermitian matrix of trace zero, so the second term takes both signs and
may dominate the first: tangencies can be saddles or degenerate, and
the tangency locus need not be a transversal to the foliation. To our
knowledge, the LVM construction has so far been confined to abelian
groups, and this computation shows that the restriction is genuine.

In this paper we show that the obstruction disappears for a large class of
non-abelian solvable groups, and we explain why. Let $K\subset U(n)$ be
a compact connected group of even dimension $2k$, with complexification
$S\subset GL(n,\C)$. A \emph{Samelson group} is a complex subgroup
$G\subset S$ complementary to $K$: every element of $S$ is in a unique
way a product $gu$ with $g\in G$ and $u\in K$. These are the groups
used by Samelson \cite{Sam} and Wang \cite{Wang} to put left-invariant
complex structures on compact Lie groups of even dimension, and they
were classified by Pittie \cite{Pittie}. For such groups the tangency
locus of the $G$-orbit foliation is the zero set of the moment map of
$K$, and the commutator term vanishes there. It does not vanish
elsewhere, so the global argument is carried out along the Hermitian
one-parameter subgroups of $S$, which have no rotation part
(Remark~\ref{rem:commutator}); this convexity argument is the
Kempf--Ness theorem \cite{KN} in disguise. The ingredients are
classical. What is new is that they fit into the tangency picture of
L\'opez de Medrano, Verjovsky and Meersseman, which thereby acquires a
non-abelian theory; it is the simplicity of their original construction
that makes this extension possible.

Let $H^*$ be the zero set of the moment map of $K$ on $\C^n$, with the
origin removed. We say that $K$ is \emph{admissible} if $H^*$ is
non-empty and $K$ acts on it with finite stabilizers
(Definition~\ref{def:admissible}); for tori this is exactly
Meersseman's condition. Put $\cS=G\cdot H^*$, an open subset of $\C^n$,
and call
$$
N=N_G=H^*/\C^*
$$
the \emph{non-abelian LVM manifold} of $(K,G)$
(Definition~\ref{def:nonabelianLVM}), endowed with the complex
structure transverse to the projectivized foliation by $G$-orbits.

\begin{theoremA}
Let $K$ be admissible and let $G$ be a Samelson group in its
complexification. Then:
\begin{enumerate}
\item $H^*$ is the tangency locus of the orbits of $G$ with the spheres,
every tangency is a leafwise non-degenerate minimum, and every orbit in
$\cS$ meets $H^*$ exactly once;
\item $N$ is a compact complex manifold of dimension $n-k-1$,
biholomorphic to the space of leaves of the projectivized foliation;
\item $N$ fibres holomorphically, in the orbifold sense, over
$\PP(\cS)/S$, with model fibre $K$ endowed with its Samelson complex
structure; geometric fibres may be finite quotients of $K$.
If $K$ is not abelian, $N$ is not K\"ahler.
\end{enumerate}
\end{theoremA}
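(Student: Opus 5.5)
The plan is to derive everything from the Iwasawa-type decomposition $S=GK$ and the convexity of the norm along Hermitian one-parameter subgroups of $S$ (Kempf--Ness).

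\emph{Step 1: the tangency locus.} Let $\mu(z)(\xi)=\tfrac{i}{2}\langle \xi z,z\rangle$ for $\xi\in\mathfrak k$ be the moment map of $K$, and write $\mathfrak s=\mathfrak g\oplus\mathfrak k$. The Lie algebra of $K$ consists of skew-Hermitian matrices, so $\mathfrak s=\mathfrak k\oplus i\mathfrak k$ and the Hermitian part of $\mathfrak s$ is $i\mathfrak k$. A point $z\ne0$ is a tangency of its $G$-orbit with the sphere exactly when $\mathrm{Re}\langle Bz,z\rangle=\langle H_Bz,z\rangle=0$ for all $B\in\mathfrak g$. I will check that the Hermitian parts $H_B$, $B\in\mathfrak g$, fill $i\mathfrak k$. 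The map $B\mapsto H_B$ is real-linear from $\mathfrak g$ to $i\mathfrak k$, and it has trivial kernel, since $\mathfrak g\cap\mathfrak k=0$. Both spaces have real dimension $2k$, because $\dim_{\R}\mathfrak g=\dim_\R\mathfrak s-\dim\mathfrak k=2k$. So the map is an isomorphism, and the tangency condition becomes $\mu(z)=0$, that is, $z\in H^*$.

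At such a $z$ the second derivative equals $4\|H_Bz\|^2+2\langle[H_B,K_B]z,z\rangle$. The commutator $[H_B,K_B]$ lies in $[i\mathfrak k,\mathfrak s]$. Up to the scalar $i$, $\langle [H_B,K_B]z,z\rangle$ is the pairing of $\mu(z)$ with an element of $\mathfrak k$, after decomposing $[H_B,K_B]$ into its $\mathfrak k$ and $i\mathfrak k$ parts; the $i\mathfrak k$ part gives a Hermitian term whose expectation is $\mu$ paired with something. Either way it vanishes on $\mu^{-1}(0)$. This is the paper's earlier computation, Remark~\ref{rem:commutator}.

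For non-degeneracy I need $H_Bz\neq0$ whenever $B\neq0$. If $H_Bz=0$, then $iH_B\in\mathfrak k$ lies in the stabilizer algebra of $z$ ($\mathfrak k_z\otimes\C=\mathfrak s_z$ is standard on the zero level). Finite stabilizers force $H_B=0$, so $B\in\mathfrak k\cap\mathfrak g=0$.

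\emph{Step 2: one meeting per orbit.} Existence is immediate, since $\cS=G\cdot H^*$. For uniqueness, suppose $z,gz\in H^*$. Write $g=ue^{i\xi}$ with $u\in K$ and $\xi\in\mathfrak k$ (Cartan decomposition). Since $K$ preserves both $H^*$ and the norm, $e^{i\xi}z\in H^*$. The function $t\mapsto\|e^{it\xi}z\|^2$ is convex, because along Hermitian subgroups there is no commutator term. Its derivative vanishes at $t=0$ and $t=1$, so it is constant on $[0,1]$. Hence $\xi z=0$, so $\xi$ lies in the stabilizer algebra, which is zero. Then $g=u\in G\cap K=\{1\}$. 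Openness of $\cS$ follows from the transversality in Step 1 and the implicit function theorem.

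\emph{Step 3: the quotient $N$.} Step 1 shows that $H^*$ is smooth of real dimension $2n-k$ (the regular zero level of $\mu$) and meets the $G$-orbits transversally. The real dimensions add up correctly: $2n-k+2k-2n=k$ is the real dimension of the intersection with an orbit, which is consistent with the $K$-orbits. The projectivized tangency locus contains $\C^*$-orbits, and it is compact because $H^*\cap S^{2n-1}$ is compact and $H^*/\C^*=(H^*\cap S^{2n-1})/S^1$. The map from $H^*/\C^*$ to the leaf space of the projectivized foliation on $\PP(\cS)$ is a bijective local diffeomorphism onto that leaf space. The leaf space is therefore Hausdorff, and it carries the transverse holomorphic structure. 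This gives a complex manifold of dimension $(n-1)-k$.

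\emph{Step 4: the fibration.} The map $N\to\PP(\cS)/S$ is the quotient by the residual action of $S/G\cong K$, so its fibres are $S/G$ or finite quotients of it. These fibres carry the Samelson structure, because $G$ is a complex subgroup of $S$ and hence $S/G$ has an $S$-invariant complex structure. Since stabilizers are finite, the base is an orbifold (Kempf--Ness: $\PP(\cS)/S\cong\PP(H^*)/K$).

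\emph{Step 5: non-K\"ahler.} If $K$ is non-abelian, its fibres are compact complex submanifolds that are Samelson-type structures on a non-abelian compact group. They have $b_2=0$, since $K$ is semisimple up to a torus. I would argue as follows: a K\"ahler form restricted to a fibre is cohomologous to zero on the semisimple part, yet $\int\omega^{\dim}>0$ on the fibre. This requires care with the torus factor. If that fails, the fallback is Blanchard's theorem: in a K\"ahler family a non-K\"ahler fibre is impossible, and Samelson structures on non-abelian $K$ are never K\"ahler.

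\textbf{Main obstacle.} The hardest part is showing that the commutator term vanishes on $H^*$ for genuinely triangular (non-unitary) $B$, together with rigorously establishing the fibre-integral argument in Step 5.
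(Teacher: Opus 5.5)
Your Steps 1--3 follow the paper's route. The Hermitian parts of $\mathfrak g$ fill $i\mathfrak k$. The commutator $[H_B,K_B]$ lies in $\mathfrak s$ and therefore pairs to zero against any $z\in H^*$. Uniqueness comes from convexity of $\|e^{tP}z\|^2$ along the polar factor. So the commutator issue you list as the main obstacle is already settled in your Step 1.

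The dimension count in Step 3 is wrong and should be fixed. $H^*$ is cut out by $2k=\dim\mathfrak k$ independent quadrics, so it has real dimension $2n-2k$, not $2n-k$. Transversality means $T_zH^*\oplus\mathfrak g z=\C^n$, so $H^*$ meets each $G$-orbit in one point (as your Step 2 shows), not in a $k$-dimensional set. Your sentence about $2n-k+2k-2n=k$ contradicts Step 2. Transversality should also be derived rather than asserted: if $\xi z\in T_zH^*$, pairing with $H_\xi z$ gives $\|H_\xi z\|^2=0$.

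The genuine gaps are in Steps 4 and 5.

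\textbf{Step 4.} For non-abelian $K$ the group $G$ is not normal in $S$, so $S/G$ is not a group and $S$ does not act on $N=\PP(\cS)/G$; there is no ``residual action''. What acts on $N=(H^*\cap S^{2n-1})/S^1$ is $K$, smoothly but in general not holomorphically, and its orbits are the fibres. Their complex structure has to be read off as the leaf space $G\backslash S/\Gamma_{[z]}$ of the $S$-orbit of $[z]$, where $\Gamma_{[z]}$ is the finite projective stabilizer. More importantly, you never prove local triviality, which is the actual content of ``fibre bundle in the orbifold sense''. The paper gets it from properness of the $S\times\C^*$-action and local slices. Properness comes from the diffeomorphism $(S\times\C^*)\times_{K\times S^1}(H^*\cap S^{2n-1})\to\cS$ supplied by Proposition~\ref{prop:once}.

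\textbf{Step 5.} The claim $b_2(K)=0$ is false. One has $H^2(K;\R)=\Lambda^2H^1(K;\R)$ with $b_1(K)=\dim Z(K)$, so the admissible group $K=U(2)\times U(2)$ of Example~\ref{ex:CE} has $b_2=1$. Your fallback is circular. Restricting a K\"ahler form to a compact complex fibre needs no Blanchard theorem, and the non-K\"ahlerness of Samelson structures on non-abelian $K$ is precisely what must be shown.

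The paper proves it via Borel--Remmert: a compact homogeneous K\"ahler manifold is a torus times a flag manifold. Combined with $\pi_2(K)=0$ and $\pi_3(K)\neq0$, this rules out non-abelian $K$.

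Alternatively, your cohomological idea can be completed. Every $c\in H^2(K;\R)$ lies in $\Lambda^2H^1(K;\R)$, so $c^k$ lies in the image of $\Lambda^{2k}H^1(K;\R)$. That space is zero, because $\dim Z(K)<2k=\dim K$ when $K$ is non-abelian. This contradicts $\int_K\omega^k>0$ for a K\"ahler form pulled back to the finite cover $G\backslash S\cong K$ of a fibre.
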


Parts (1) and (2) are Theorem~\ref{thm:main}, and part (3) is
Theorem~\ref{thm:structure}. For a torus, Theorem~A recovers, after the
normalization of Section~\ref{subsec:condition-K}, the LVM manifolds
satisfying the rationality condition (K) of Meersseman and Verjovsky
\cite{MV}. For $K=U(2)$ it gives homogeneous Hopf surfaces and complex
structures on the Stiefel manifolds $V_2(\C^p)$ fibred by Hopf surfaces
over Grassmannians, as in Ornea and Piccinni \cite{OP}; for
$K=U(2)\times U(2)$ and suitable Samelson groups the fibres are, up to
finite quotients, products of an elliptic curve and a Calabi--Eckmann
manifold \cite{CE}. On the other hand, $SU(2)\times SU(2)$, whose
Samelson structures are the Calabi--Eckmann structures on
$S^3\times S^3$, has no admissible representation
(Proposition~\ref{prop:no-SU2SU2}). Thus LVM, Hopf, Calabi--Eckmann and
Samelson manifolds appear in a single picture.

By part (3), the manifolds of Theorem~A are holomorphic bundles over
projective orbifolds whose fibres are compact Lie groups with Samelson
structures. Such bundles were studied by Poddar and Thakur \cite{PT},
whose general construction also allows non-projective bases; see also
Loeb, Manjar\'in and Nicolau \cite{LMN}. What the present approach adds
is their realization as projectivized manifolds of tangencies of linear
foliations, in exactly the same terms as the LVM manifolds, and, in the
irrational case below, the construction of examples whose invariants
can be computed.

The next question is what happens when the rotation parts generate a
non-closed group, the situation of the LVM manifolds without condition
(K). The non-abelian construction extends to this setting as well
(Theorem~\ref{thm:irrational}).

\begin{theoremB}
Parts \textup{(1)} and \textup{(2)} of Theorem~A remain valid when the
compact group $K$ is replaced by the
group $K'\cdot\exp\mathfrak z$ generated by a compact connected group
$K'$ and a possibly non-closed group of diagonal unitary matrices
commuting with $K'$, with $\dim(\mathfrak k'\oplus\mathfrak z)$ even,
and $G$ by a Samelson group in the sense of
Section~\ref{sec:irrational}, provided the tangency locus is non-empty
and the infinitesimal stabilizers vanish on it.
\end{theoremB}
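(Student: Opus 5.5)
The plan is to rerun the proof of Theorem~A, parts (1) and (2), with the compact group $K$ replaced by $L=K'\cdot\exp\mathfrak z$. Compactness of $K$ is used in only two places. First, in the Kempf--Ness convexity along Hermitian one-parameter subgroups. Second, in the compactness of $H^*/\C^*$. The first is insensitive to whether $L$ is closed. For the second, I replace $L$ by its closure $\bar L=K'\cdot T$, where $T$ is the compact torus closure of $\exp\mathfrak z$, and I keep the smaller group only to define the foliation.

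\textbf{Step 1: the tangency locus is a moment-map zero set.} Set $\mathfrak l=\mathfrak k'\oplus\mathfrak z$ and $\mu(z)(\xi)=\langle i\xi z,z\rangle$ for $\xi\in\mathfrak l$. Let $\mathfrak g$ be the Lie algebra of the Samelson group, so that $\mathfrak l^{\C}=\mathfrak g\oplus\mathfrak l$ as real vector spaces. Write $B\in\mathfrak g$ as $B=H_B+K_B$. The first derivative of $\|e^{tB}z\|^2$ at $t=0$ is $2\langle H_Bz,z\rangle$, and $H_B$ runs over $i\mathfrak l$ as $B$ runs over $\mathfrak g$, by complementarity. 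Hence $z$ is a tangency exactly when $\mu(z)=0$, so the tangency locus is $H^*$.

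\textbf{Step 2: leafwise non-degeneracy.} At $z\in H^*$ the commutator term $\langle[H_B,K_B]z,z\rangle$ is a value of $\mu(z)$ on $[H_B,K_B]\in\mathfrak l$. This holds because $[i\mathfrak l,\mathfrak l]\subset i\mathfrak l$; note also that $\mathfrak z$ is central in $\mathfrak l$. So the term vanishes, as in Remark~\ref{rem:commutator}, and the Hessian reduces to $4\|H_Bz\|^2$. This is positive definite because the infinitesimal stabilizers vanish on $H^*$.

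\textbf{Step 3: each orbit in $\cS$ meets $H^*$ exactly once.} Uniqueness follows from convexity of $t\mapsto\|e^{tX}z\|^2$ for $X\in i\mathfrak l$ Hermitian, which has no commutator term. Write $g=e^{X}u$ with $u\in L$. The Hermitian one-parameter subgroup from $z$ to $gz$ then joins two critical points, so it is constant, and strict convexity at zeros of $\mu$ together with the absence of infinitesimal stabilizers makes it trivial. To turn this into a statement about $G$-orbits, I will use the decomposition $L^{\C}=G\cdot L$: $G\cdot z\cap H^*$ lies in $L^{\C}z\cap H^*=Lz$. So it remains to show that $G\cdot z\cap Lz$ is a single point. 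This uses uniqueness in the decomposition: if $gz=uz$ then $u^{-1}g$ fixes $z$, and stabilizers near $H^*$ are discrete. Here some care is needed, because $L$ is not closed and the decomposition is only that of Section~\ref{sec:irrational}. Existence of the meeting point is automatic, since $\cS=G\cdot H^*$ by definition.

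\textbf{Step 4: compactness and complex structure (part (2)).} I expect this to be the main obstacle. By Steps 1--2, $H^*/\C^*$ is a smooth submanifold of the sphere quotient of real codimension $\dim\mathfrak l$. It is closed in $S^{2n-1}/S^1$ once we know that $0$ is not a limit point of $\{\mu=0\}\cap S^{2n-1}$ away from $H^*$, and this holds because $\mu^{-1}(0)\cap S^{2n-1}$ is closed and misses the origin. Hence $N=H^*/\C^*$ is compact. Transversality to the projectivized $G$-foliation, whose leaves have real dimension $\dim\mathfrak l$, follows from Step 2 (injectivity of $B\mapsto H_Bz$) and a dimension count. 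Combined with Step 3, this identifies $N$ with the leaf space and gives it the transverse holomorphic structure, of dimension $n-1-\tfrac12\dim\mathfrak l$. The delicate point I would check first is that the leaf space is Hausdorff. Here I would use the properness of $\bar L$ acting on $H^*$, together with the fact that the map $N\to\PP(\cS)/G$ is a continuous bijection from a compact space. Openness of this map follows from local transversality, so it is a homeomorphism.
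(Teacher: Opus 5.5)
\textbf{Where the gap is.} Your outline matches the paper's: rerun Theorem~A, using convexity only along Hermitian one-parameter subgroups. Steps~1--2 and the first half of Step~3 are sound. The polar decomposition holds factorwise, $s=ue^{X}$ with $u\in L$ and $X\in i\mathfrak l$. Convexity together with $\mathfrak l_z=0$ then gives $L^{\C}z\cap H^*=Lz$, and shows that the stabilizer of $z\in H^*$ in $L^{\C}$ lies in $L$.

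The gap is the second half of Step~3, which is exactly where Theorem~B differs from Theorem~A. From $gz=uz$ you get $u^{-1}g\in L$, hence $g\in G\cap L$. To conclude you need $G\cap L=\{e\}$, that is, uniqueness in $L^{\C}=G\cdot L$. Discreteness of stabilizers is not the relevant input: it does not rule out a non-trivial $g\in G\cap L$ carrying $z$ to a different point $gz\in Lz\subset H^*$. In Theorem~A this uniqueness is Lemma~\ref{lem:Samelson}, proved via $S/K\cong i\mathfrak k$ and a covering argument. You cannot quote it here: $L$ is not closed in $U(n)$, and the natural homomorphism $K'^{\C}\times\mathfrak z^{\C}\to GL(n,\C)$ need not be injective. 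Writing ``some care is needed'' therefore leaves the one new point of the theorem unproved. Step~4 depends on it too: the bijectivity of $N\to\PP(\cS)/G$, hence Hausdorffness of the leaf space and the identification of $N$ with it.

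\textbf{How the paper closes it.} It works with the abstract groups $\widetilde K=K'\times\mathfrak z\subset\widetilde S=K'^{\C}\times\mathfrak z^{\C}$. There $\widetilde K$ is closed and $\widetilde S/\widetilde K\cong i\mathfrak k$ is a vector space. The isotropy algebras of $\widetilde G$ on it are $\mathfrak g\cap\Ad(s)\mathfrak k=0$, by the eigenvalue argument, so $\widetilde G$ acts simply transitively. This gives $\widetilde S=\widetilde G\widetilde K$ with uniqueness and $\widetilde G\cap\widetilde K=\{e\}$. Injectivity of $\widetilde G\times H^*\to\C^n$ then follows word for word from Theorem~\ref{thm:main}.

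\textbf{A fix inside $GL(n,\C)$.} If you prefer to stay in the matrix group, it suffices to show $G\cap U(n)=\{e\}$ directly. Write $g=e^{a}e^{\nu}$ with $a\in\mathfrak a$, $\nu\in\mathfrak n^+$, in a weight basis in which $\mathfrak n^+$ is strictly upper triangular. Unitarity makes the eigenvalues $e^{a_{jj}}$ unimodular, so the Hermitian part of $a$ vanishes. Hence $a\in\mathfrak a\cap(\mathfrak t'\oplus\mathfrak z)=0$, and $g=e^{\nu}$ is unipotent and unitary, so $g=e$.

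\textbf{On where compactness enters.} Your account of this is off. Compactness of $N$ never used compactness of $K$: $\mu^{-1}(0)\cap S^{2n-1}$ is simply closed in the sphere, as you eventually note. So the detour through $\bar L$ is unnecessary, and using the moment map of $\bar L$ would in fact shrink $H^*$. Likewise, no properness of $\bar L$ is needed for Hausdorffness once bijectivity is established.
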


For a torus, Theorem~B gives the LVM manifolds without condition (K).
In the simplest non-abelian irrational examples, built from binary
forms (Section~\ref{sec:binary-forms}), the irrationality is absorbed
into the moduli of the fibre, and one obtains bundles of Hopf manifolds
over projective orbifolds, of known types. Section~\ref{sec:absorption}
explains this absorption and identifies a limitation of the argument
when the irrational torus separates copies of a non-abelian
representation that can vanish independently. The examples below
exploit this situation.

An explicit situation in which this absorption argument does not apply is the following. Let $\zeta=e^{2\pi i/5}$
and $V=(\C^2)^5\oplus\C$, with coordinates $(v_0,\ldots,v_4,c)$. The
group $SU(2)$ acts on each $v_j$ by its standard representation and
trivially on $c$, and a three-dimensional non-closed group of diagonal
unitary matrices acts on the five copies of $\C^2$ through the vertices
$\zeta^j$ of a regular pentagon and on $c$ by a character. Together
with the Hermitian matrix $\diag(1,-1)$ and the nilpotent matrix
$\left(\begin{smallmatrix}0&1\\0&0\end{smallmatrix}\right)$ acting on
each $v_j$, this gives a Samelson group $G$ satisfying the hypotheses
of Theorem~B, and the resulting manifold has the explicit model
$$
N\cong\Bigl\{(v_0,\ldots,v_4)\in(\C^2)^5:\ \sum_jv_jv_j^*=\tfrac12I_2,\ \
\sum_j\zeta^j\|v_j\|^2=0\Bigr\}
$$
(Proposition~\ref{prop:N5}), with the complex structure transverse to
the orbits of $G$. Replacing the pentagon by $m\geq5$ suitable points
$\omega_j$ of the unit circle gives the manifolds $N_{\omega,\tau}$ of
Section~\ref{sec:families}; the pentagon is the case $m=5$.

\begin{theoremC}
For every $m\geq5$ there are compact complex manifolds $N$ of
dimension $2m-3$, obtained from Theorem~B, with the following
properties:
\begin{enumerate}
\item $N$ is $4$-connected and admits no symplectic form;
\item $N$ is a holomorphic bundle of Hopf surfaces over a compact
complex manifold of dimension $2m-5$, obtained from the stable part of
the Grassmannian $\Gr(2,m)$;
\item the meromorphic function field of $N$ is purely transcendental
of degree $m-2$, so that $N$ has algebraic dimension $m-2$, but every
holomorphic map from $N$ to a projective space is constant;
\item the identity component of the automorphism group of $N$ is
abelian of complex dimension $m-1$, and its maximal compact subgroup
is a torus of real dimension $m+1$.
\end{enumerate}
Consequently $N$ admits no maximal torus action, is not homogeneous,
and is not a Samelson-fibre bundle over a projective orbifold. For
$m=5$ this is the pentagon example above, of dimension $7$ and
algebraic dimension $3$.
\end{theoremC}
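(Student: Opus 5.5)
Theorem~C is a package of properties of one explicit family, so the plan is to fix the model first and then read every property off a single fibration. For $m\geq5$ and suitable points $\omega_0,\dots,\omega_{m-1}$ on the unit circle, take
$$
N=N_{\omega,\tau}=\Bigl\{v\in(\C^2)^m:\ \sum_j v_jv_j^*=\tfrac12 I_2,\ \sum_j\omega_j\|v_j\|^2=0\Bigr\},
$$
the analogue of Proposition~\ref{prop:N5}, realized as the tangency locus of Theorem~B. I first check the hypotheses of Theorem~B: the moment-map zero set is non-empty and infinitesimal stabilizers vanish on it. Then $N$ is compact complex. Its real dimension is $4m-4-(3+2)=4m-9$ plus the one-dimensional circle direction absorbed by the projectivization, which gives the count $2m-3$ (equivalently $n-k-1$ with $n=2m+1$, $k=2$).

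For (2), view $v$ as a $2\times m$ matrix with orthonormal rows, scaled. The row space gives a map to $\Gr(2,m)$, and the torus condition $\sum\omega_j\|v_j\|^2=0$ is a moment condition for the diagonal torus acting on $\Gr(2,m)$. The base is therefore the quotient of the stable locus by the complex torus orbit foliation twisted by $\omega$, an LVM-type quotient of complex dimension $2(m-2)-1=2m-5$. The fibre is $U(2)$ modulo the circle, with the Samelson-type structure induced by $G$, i.e.\ a Hopf surface. Holomorphicity of the projection follows because $G$-orbits map to orbits of the induced action on the base.

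For (1), I use the fibration $U(2)\to V\to$ (stable part of the Stiefel manifold). The stable locus has complement of high complex codimension; for $m\geq5$ and a generic pentagon-type $\omega$ the unstable strata have codimension $\geq3$, which gives $4$-connectivity of the relevant open set. The quotient by the residual torus is then analyzed via the long exact sequence. With $b_2=0$ we get $H^2(N;\R)=0$, and compactness of $N$ rules out any symplectic form.

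For (3) and (4), every meromorphic function is constant on the Hopf fibres, since these have algebraic dimension $0$ generically or irrational moduli. So the field is pulled back from the base, which is a torus quotient of $\Gr(2,m)$ whose invariant rational functions, ratios of Plücker coordinates of weight zero, form a purely transcendental field of degree $\dim\Gr(2,m)-(m-1)=m-2$; compare the moduli of $m$ points on $\PP^1$. A holomorphic map to $\PP^r$ pulls back $\mathcal O(1)$; since $H^2(N)=0$ the line bundle is topologically trivial, and together with the absence of non-constant holomorphic functions this forces the map to be constant.

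For the automorphisms, I lift vector fields to $\C^{2m}$ commuting with $G$: the centralizer is the diagonal torus $(\C^*)^m$ times the scalars, modulo $G$. This gives an abelian group of dimension $m-1$, with maximal compact part $T^{m}\times S^1$ of real dimension $m+1$. A maximal torus action would require compact torus dimension $\geq\dim_\C N+1=2m-2>m+1$ once $m\geq5$, which rules it out. Non-homogeneity follows from $\dim\mathrm{Aut}<\dim N$. The failure of the Samelson-fibre-over-projective-orbifold structure follows from (3): such bundles would admit a non-constant map to projective space.

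I expect the main obstacle to be the automorphism computation. Showing that every holomorphic vector field on $N$ lifts to a $G$-equivariant linear field on $\cS$ needs a Hartogs extension across the unstable locus, whose high codimension is exactly what the choice $m\geq5$ provides, together with a careful determination of the normalizer.
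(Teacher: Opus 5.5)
The decisive gap is in part (3). Your proof that holomorphic maps to projective space are constant uses only $H^2(N)=0$ and the absence of non-constant holomorphic functions, and these do not suffice. Since $N$ is simply connected, topologically trivial holomorphic line bundles are parametrized by $H^1(N,\mathcal O)$, which you do not control. On non-K\"ahler manifolds such bundles can be non-trivial and even globally generated. The Calabi--Eckmann fibration $S^3\times S^3\to\PP^1\times\PP^1$ contradicts your implication. So do the Hopf fibres of $N$ themselves: $B_\tau\backslash GL(2,\C)$ maps holomorphically onto $B\backslash GL(2,\C)\cong\PP^1$, where $B\supset B_\tau$ is the upper triangular Borel subgroup, with elliptic fibres $B_\tau\backslash B$.

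The same fact breaks your function-field argument. These fibres are Hopf surfaces with scalar contraction, of algebraic dimension one, so $\mathcal M(N)$ is not pulled back from the base. Your own count gives $\dim\Gr(2,m)-(m-1)=m-3$, not $m-2$; this is the transcendence degree of $\mathcal M(Z_\omega)$ (cross-ratios of $m$ points of $\PP^1$). The missing generator comes from the fibre. The correct field consists of rational functions invariant under the upper Borel subgroup on the left and the full column torus on the right, that is, $m$ points of $\C$ modulo affine maps, generated by the $t_j$ of Proposition~\ref{prop:function-field}. You also never show that meromorphic functions are rational, nor that invariance under $C_\omega$ implies invariance under the whole column torus. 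The latter is false without (R): an integral $\ell\neq0$ with $\sum_j\ell_j=0=\sum_j\ell_j\omega_j$ yields ratios of Pl\"ucker monomials of column weight $\ell$ that descend to $N$.

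What is needed is to work on $U_\omega$. Codimension at least three gives Hartogs extension, $H^1(U_\omega,\mathcal O)=0$ and $\operatorname{Pic}(U_\omega)=0$, so a map or function is given by entire functions with a common multiplier. The eigenvalues $(p-q)+2\tau(p+q)$ of $D_0$ on monomials of row bidegree $(p,q)\neq(0,0)$ lie in the half-plane $\operatorname{Im}>0$ and grow linearly. This lets you normalize the multiplier and forces polynomials of one row bidegree. Then (R) gives one column multidegree, and (D), a point of $U_\omega$ with a prescribed zero column, kills every column degree.

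A second gap is existence. The theorem asserts examples for every $m\geq5$, but you only assume ``suitable'' weights whose unstable locus has codimension at least three, and you never impose (R). The paper obtains both in Proposition~\ref{prop:seed}, deducing (G) for the pentagon plus generic extra weights by restricting strata to the pentagon columns and applying Lemma~\ref{lem:codim}.

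Several smaller points also need correction:
\begin{enumerate}
\item Here $k=3$, not $2$, since $\dim\mathfrak{su}(2)+\dim\mathfrak z=6$; so $n-k-1=2m-3$, and the level set has real dimension $4m-6$.
\item The fibre of $N\to Z_\omega$ is $U(2)$ itself, not $U(2)$ modulo a circle.
\item For (1), the relevant quotient is the principal bundle $U_\omega\to N$ with contractible group $\widetilde G\cong\C^3$, not a torus quotient.
\item For (4), a lift $X$ of a vector field need not commute with $\mathfrak g$; only $[X,\mathfrak g]$ is known to be vertical. Reducing $X$ to a linear field normalizing $\mathfrak g$ uses $H^1(U_\omega,\mathcal O)=0$ and the spectrum of $\operatorname{ad}D_0$.
\item Within the column torus $(\C^*)^m$, the kernel of the action on $N$ contains, besides $C_\omega$, the non-unitary scalar $\exp(\pi iD_0)=-e^{2\pi i\tau}I\in\widetilde G$. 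This element produces the $(m+1)$-st compact direction, which you assert without argument.
\item A maximal torus action needs $\dim T\geq\dim_\C N$, not $\dim_\C N+1$.
\item Excluding Samelson-fibre bundles also requires treating a zero-dimensional base, which $4$-connectedness handles.
\end{enumerate}
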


This is Theorem~\ref{thm:families} together with
Proposition~\ref{prop:seed}, which provides the required weights in
every $m\geq5$; for the pentagon see also Theorem~\ref{thm:N5},
Corollary~\ref{cor:new} and Proposition~\ref{prop:function-field}. By
(4), the maximal compact torus is too small to act maximally on a
manifold of dimension $2m-3$, so $N$ is not biholomorphic to any
manifold with a maximal torus action in the sense of Ishida, in
particular to no LVM or LVMB manifold and to no moment-angle manifold
with its standard complex structure. In (2) the base replaces the
projective spaces of the toric world by non-toric quotients of
Grassmannians: the non-abelian group is absorbed into the fibre, but it
changes the base. These bases have rational function fields of
transcendence degree $m-3$, generated by cross-ratios of the column
directions, and their first homotopy groups are $\pi_1=\pi_3=0$ and
$\pi_2=\pi_4=\Z$ (Theorem~\ref{thm:Z}). As Hopf-surface bundles,
the manifolds of Theorem~C fit the general construction of
Poddar--Thakur \cite[Theorem~3.3]{PT} over non-projective bases; what
distinguishes them is their realization as manifolds of tangencies,
their Grassmannian bases, and the fact that all the invariants above can
be computed. These families show that the non-abelian tangency
construction produces compact complex manifolds outside the class of
manifolds admitting maximal torus actions.

The Grassmannian bases are also a family of interest in their own
right. Theorem~\ref{thm:Z} computes their automorphism groups, homotopy
groups and meromorphic function fields. For $m=5$, the base $Z_5$ has
complex dimension five and algebraic dimension two, yet every
holomorphic map from it to a projective space is constant. Like the
total spaces, these bases admit no maximal torus action and, under
condition (R), no Samelson-fibre bundle presentation over a projective
orbifold.

The proofs of (3) and (4) lift holomorphic vector fields, meromorphic
functions and holomorphic maps to projective space to an affine model,
an open subset of $\C^{2\times m}$ whose complement has codimension at
least three. They use Hartogs and Scheja extension, the vanishing of
its Picard group, and the fact that one generator of the Samelson group
has all its eigenvalues in a half-plane. The $4$-connectedness comes
from the same codimension estimate, and implies the absence of
symplectic forms.

The paper is organized as follows. Section~\ref{sec:prelim} recalls LVM manifolds, Meersseman's
admissibility, the transverse complex structure of Haefliger, and
Samelson groups. Section~\ref{sec:tangency} studies the tangency locus,
and Section~\ref{sec:main} proves Theorem~A and describes the structure
of the manifolds. Section~\ref{sec:irrational} proves Theorem~B,
Section~\ref{sec:binary-forms} treats the binary forms, and
Section~\ref{sec:absorption} discusses absorption and open questions.
Section~\ref{sec:new} studies the pentagon example, and
Section~\ref{sec:families} proves Theorem~C in general and computes the
algebraic reductions. An elementary account of the construction will
also appear in a forthcoming book by the author, within the framework
of tangency geometry.

Throughout this paper $\langle\ ,\ \rangle$ is the
standard Hermitian product on $\C^n$, complex linear in the first
variable, and $\|z\|^2=\langle z,z\rangle$. Every matrix $B$ splits as
$B=H_B+K_B$ with $H_B=\tfrac12(B+B^*)$ Hermitian and
$K_B=\tfrac12(B-B^*)$ skew-Hermitian. We think of $e^{K_B}$ as a
\emph{rotation} and of $e^{H_B}$ as a \emph{dilation}.
Matrix brackets mean $[A,B]=AB-BA$. For the associated linear
vector fields $X_A(z)=Az$, regarded as derivations, we use the usual
bracket, so $[X_A,X_B]=-X_{[A,B]}$. When the same symbol denotes a
matrix and its fundamental field, the context specifies which bracket
is intended.

\section{Preliminaries}\label{sec:prelim}

\subsection{LVM manifolds and Meersseman's admissibility}

Let $\Lambda=(\Lambda_1,\ldots,\Lambda_n)$, $\Lambda_j\in\C^k$, with
$n>2k$, and let $\C^k$ act on $\C^n$ by
$$
T\cdot z=\bigl(z_1e^{\Lambda_1\cdot T},\ldots,
z_ne^{\Lambda_n\cdot T}\bigr),\qquad
\Lambda_j\cdot T=\sum_{\alpha=1}^k\Lambda_{j\alpha}T_\alpha.
$$

\begin{definition}[Meersseman \cite{M}]
\label{def:Meersseman-admissible}
The configuration $\Lambda$ satisfies the \emph{Siegel condition} if
$0\in\operatorname{Conv}\{\Lambda_1,\ldots,\Lambda_n\}$, and the
\emph{weak hyperbolicity condition} if
$0\notin\operatorname{Conv}\{\Lambda_{i_1},\ldots,\Lambda_{i_{2k}}\}$
for every choice of $2k$ indices. It is \emph{admissible} if it
satisfies both.
\end{definition}

For an admissible configuration let
$\cS_\Lambda=\{z:\ 0\in\operatorname{Conv}\{\Lambda_j:z_j\neq0\}\}$
be the union of the Siegel leaves \cite{CKP,LdM-Siegel}, and
$$
H_\Lambda=\Bigl\{z\in\C^n\setminus\{0\}:\
\sum_{j=1}^n\Lambda_j|z_j|^2=0\Bigr\}.
$$
The action is free on $\cS_\Lambda$, the restriction of $\|z\|^2$ to
each leaf has a unique critical point, a non-degenerate minimum, and
the set of these minima is $H_\Lambda$. Thus $H_\Lambda$ is the locus
where the leaves are tangent to the spheres, and it meets every leaf
in $\cS_\Lambda$ exactly once. The LVM manifold is
$N_\Lambda=H_\Lambda/\C^*\subset\PP^{n-1}_{\mathbb C}$; it is a compact complex
manifold of dimension $n-k-1$, not K\"ahler for $n>2k+1$
\cite{LdMV,M}.

\subsection{The classical rationality condition (K)}
\label{subsec:condition-K}

For an admissible configuration, the classical condition (K) of
Meersseman--Verjovsky \cite{MV} is that the real vector space
$$
\mathcal R_\Lambda=
\left\{s\in\R^n:\ \sum_{j=1}^n s_j\Lambda_j=0,\quad
\sum_{j=1}^n s_j=0\right\}
$$
has a basis of vectors in $\Z^n$. For this exact formulation see
\cite[\S5.4]{KLMV1}, where the associated torus-fibration
result of \cite{MV} is recalled as Theorem~5.1.

Equivalently, the augmented real row space generated by
$\mathbf1$, $\operatorname{Re}\Lambda$ and
$\operatorname{Im}\Lambda$ is rational. Thus (K) concerns the
rotation subgroup \emph{modulo scalar rotations}; it does not require
the affine rotation subgroup in $(S^1)^n$ itself to be closed.
It is invariant under real affine changes of the weights. In
particular, a common translation of the weights leaves the
projectivized holomorphic action unchanged.

To relate this formulation to integral weights for a compact torus,
choose a real affine coordinate system in which the weights are
integral. The image of the original origin can be replaced by a
nearby rational point in the same admissibility chamber. Here a
chamber means a region on which the subsets of weights whose convex
hulls contain that point are fixed. Translate this rational point
to zero and clear denominators. The resulting integral configuration
has the same stable projective open set and the same projectivized
holomorphic foliation: the real linear change is incorporated into
the choice of the Samelson subspace, and the translation only adds
scalar infinitesimal actions. This is the normalization used when
identifying the compact-torus construction with the rational LVM case.

For example, $(1,i,-\sqrt2-i)$ is an admissible configuration with
$\mathcal R_\Lambda=0$, so it satisfies (K), whereas its affine
rotation plane has irrational normal vector $(\sqrt2,1,1)$ and
does not exponentiate to a closed subgroup of $(S^1)^3$. Its image
modulo scalar rotations is closed.

\subsection{The transverse complex structure}

The complex structure of $N_\Lambda$ is the transverse structure of
the foliation, in the sense of Haefliger \cite{Haefliger}. We use it in
the following elementary form.

\begin{lemma}\label{lem:Haefliger}
Let $\cF$ be a regular holomorphic foliation of complex codimension
$q$ on a complex manifold $W$, and let $N\subset W$ be a smooth real
submanifold of dimension $2q$, transverse to $\cF$. Then $N$ has a
unique complex structure for which the restrictions to $N$ of the
local holomorphic submersions defining $\cF$ are local
biholomorphisms. If moreover every leaf meets $N$ exactly once, $N$ is
the space of leaves of $\cF$ with its transverse complex structure.
A biholomorphism of $W$ which preserves $\cF$ and $N$ restricts to a
biholomorphism of $N$.
\end{lemma}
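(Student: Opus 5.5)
The plan is to work locally with holomorphic submersions defining $\cF$ and then glue. Cover $W$ by foliation charts: open sets $U_\alpha$ with holomorphic submersions $f_\alpha:U_\alpha\to\C^q$ whose fibres are the plaques of $\cF$. On overlaps there are biholomorphisms $\gamma_{\alpha\beta}$ between open subsets of $\C^q$ with $f_\alpha=\gamma_{\alpha\beta}\circ f_\beta$. These are the transition data of the transverse holomorphic structure in the sense of Haefliger.

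First, let $p\in N\cap U_\alpha$. Transversality means $T_pN\oplus T_p\cF=T_pW$ as real vector spaces. Since $\ker df_\alpha(p)=T_p\cF$, the restriction of $df_\alpha(p)$ to $T_pN$ is injective. The dimensions agree, $\dim_\R T_pN=2q=\dim_\R\C^q$, so this restriction is an isomorphism. By the inverse function theorem, $f_\alpha|_{N\cap U_\alpha}$ is a local diffeomorphism near $p$, and it is a diffeomorphism onto its image after shrinking.

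Second, take these restrictions as charts on $N$. On overlaps the chart changes are restrictions of the $\gamma_{\alpha\beta}$, hence holomorphic. So $N$ acquires a complex atlas, and by construction each $f_\alpha|_N$ is a local biholomorphism. Uniqueness holds because a complex structure on $N$ making every $f_\alpha|_N$ a local biholomorphism must have these maps as holomorphic charts, and charts covering $N$ determine the structure. Independence of the chosen foliation atlas follows in the same way: any two defining submersions differ locally by biholomorphisms of $\C^q$, so the atlases are compatible.

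Third, suppose every leaf meets $N$ exactly once. The composite $N\hookrightarrow W\to W/\cF$ is then a bijection, and a continuous one. Locally it is the map $f_\alpha|_N$ onto the local leaf-space chart. I expect this step to need the most care. In this generality $W/\cF$ need not be Hausdorff, and plaques of one leaf can recur in a single chart. I would therefore read ``space of leaves with its transverse complex structure'' as follows: $N$, with the structure just constructed, is a complete transversal that represents the holonomy pseudogroup of $\cF$. Concretely, for each leaf $L$ there is a unique point $L\cap N$. Sliding along $L$ inside charts carries any local transversal near $L$ biholomorphically onto a neighbourhood of that point, by the same transition maps $\gamma_{\alpha\beta}$. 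This is the structure the statement puts on the leaf space, and the bijection is a homeomorphism for the quotient topology: it is continuous, and it is open because the saturation of an open subset of $N$ is open, since holonomy transport is a local homeomorphism.

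Finally, let $\Phi$ be a biholomorphism of $W$ preserving $\cF$ and $N$. Then $f_\beta\circ\Phi$ is a holomorphic submersion whose fibres are plaques of $\cF$, because $\Phi$ maps leaves to leaves. Hence locally $f_\beta\circ\Phi=h\circ f_\alpha$ for some biholomorphism $h$ between open subsets of $\C^q$. Restricted to $N$, the map $\Phi|_N$ reads $h$ in the charts of the second step, so it is holomorphic. Applying the same argument to $\Phi^{-1}$ shows that $\Phi|_N$ is a biholomorphism.
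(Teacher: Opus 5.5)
Your proposal is correct and follows essentially the same route as the paper. By transversality and the dimension count, the restrictions to $N$ of the local defining submersions are local diffeomorphisms, and their chart changes are restrictions of local biholomorphisms of $\C^q$. You also supply the details the paper leaves as ``following from the construction'': uniqueness, the homeomorphism with the leaf space via openness of saturations, and the factorization $f_\beta\circ\Phi=h\circ f_\alpha$ for the restriction of biholomorphisms.
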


\begin{proof}
By transversality the restriction to $N$ of a local submersion
$W\to\C^q$ defining $\cF$ is a local diffeomorphism. Two such
submersions differ by a local biholomorphism of $\C^q$, so the
restrictions form a holomorphic atlas. The other assertions follow
from the construction.
\end{proof}

\subsection{Samelson groups}\label{subsec:Samelson}

Let $K\subset U(n)$ be a compact connected Lie group with Lie algebra
$\mathfrak k$, a real space of skew-Hermitian matrices, of even
dimension $2k$. Its complexification $S\subset GL(n,\C)$ is the
connected complex Lie group with Lie algebra
$\mathfrak s=\mathfrak k\oplus i\mathfrak k$. It is closed under
$s\mapsto s^*$, and every $s\in S$ has a unique \emph{polar
decomposition} $s=u\,e^{P}$ with $u\in K$ and $P\in i\mathfrak k$
\cite[Ch.~VI]{Knapp}.

Let $T\subset K$ be a maximal torus, of dimension $r$, with Lie
algebra $\mathfrak t$, choose a system of positive roots of
$(\mathfrak s,\mathfrak t^{\C})$, and let $\mathfrak n^+$ be the sum
of the positive root spaces. Since
$\dim K=r+2\dim_\C\mathfrak n^+$, the rank $r$ is even.

\begin{definition}[\cite{Sam,Wang,Pittie}]\label{def:Samelson}
A \emph{Samelson subalgebra} of $\mathfrak s$ is a complex subalgebra
$\mathfrak g=\mathfrak a\oplus\mathfrak n^+$, where
$\mathfrak a\subset\mathfrak t^{\C}$ is a complex subspace of
dimension $r/2$ with $\mathfrak a\cap\mathfrak t=0$. The connected
subgroup $G\subset S$ with Lie algebra $\mathfrak g$ is a
\emph{Samelson group}. We write $A=\exp\mathfrak a$.
\end{definition}

The condition $\mathfrak a\cap\mathfrak t=0$ says that $\mathfrak a$
contains no infinitesimal rotation. A Samelson group is solvable, of
complex dimension $k$, and abelian exactly when $K$ is a torus; then
$G=A\cong\C^k$. For $K=U(r)$, $G$ is a group of upper triangular
matrices whose diagonal runs over the complex subgroup $A$ of the
diagonal torus. By a theorem of Pittie \cite{Pittie}, every complex
subalgebra of $\mathfrak s$ which is a real complement of
$\mathfrak k$ is a Samelson subalgebra for a suitable choice of
$\mathfrak t$ and of the positive roots.

\begin{lemma}\label{lem:Samelson}
Every element of $S$ is in a unique way a product $g\,u$ with
$g\in G$, $u\in K$. The group $G$ is closed in $S$, it is
biholomorphic to $\C^k$, and it has no non-trivial finite subgroups.
The complex torus $T^{\C}$ normalizes $G$, and $T^{\C}/A$ is a compact
complex torus of dimension $r/2$.
\end{lemma}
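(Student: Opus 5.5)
The plan is to compare $G$ with the Borel subgroup $B=T^{\C}N^+$, where $N^+=\exp\mathfrak n^+$, and to use two classical facts: the Iwasawa decomposition $S=K\,A_{\R}\,N^+$, where $A_{\R}=\exp(i\mathfrak t)$, and the decomposition $T^{\C}=T\cdot A_{\R}$ with $T\cap A_{\R}=\{1\}$. First I treat the torus. The complex subspace $\mathfrak a\subset\mathfrak t^{\C}=\mathfrak t\oplus i\mathfrak t$ has real dimension $r$ and satisfies $\mathfrak a\cap\mathfrak t=0$, so $\mathfrak t^{\C}=\mathfrak t\oplus\mathfrak a$ as real vector spaces. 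Since $T^{\C}$ is abelian, the map $\mathfrak a\times\mathfrak t\to\mathfrak t^{\C}$ exponentiates to a surjective homomorphism $A\times T\to T^{\C}$. Its kernel is $\{(a,a^{-1}):a\in A\cap T\}$.

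The delicate point is to show $A\cap T=\{1\}$ and that $\exp|_{\mathfrak a}$ is injective. Because $\mathfrak a$ is a complex subspace, $\mathfrak a\cap i\mathfrak t=i(\mathfrak a\cap\mathfrak t)=0$. Hence the projection $\mathfrak a\to i\mathfrak t$ along $\mathfrak t$ is a real isomorphism. Now $\exp X$, for $X=X_{\mathfrak t}+X_{i\mathfrak t}$, lies in $T$ only if $\exp X_{i\mathfrak t}=1$, and in that case $X_{i\mathfrak t}=0$, so $X\in\mathfrak a\cap\mathfrak t=0$. Thus $\exp:\mathfrak a\to A$ is injective, $A\cong\C^{r/2}$ is closed, and $A\cap T=1$. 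Therefore $T^{\C}=A\cdot T$ uniquely. The quotient $T^{\C}/A\cong T/(A\cap T)=T$ is compact, and it is a complex Lie group because $A$ is a closed complex subgroup. Being compact, connected and complex abelian, it is a complex torus of dimension $r/2$.

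Next I assemble $G$. Since $\mathfrak n^+$ is an ideal in $\mathfrak t^{\C}\oplus\mathfrak n^+$, the group $T^{\C}$ normalizes $N^+$, and it centralizes $A$. It therefore normalizes $G=A\ltimes N^+$, and this product is a semidirect product because $A\cap N^+\subset T^{\C}\cap N^+=1$. The unipotent group $N^+$ is closed and $\exp:\mathfrak n^+\to N^+$ is a biholomorphism. Hence $G\cong A\times N^+\cong\C^{r/2}\times\C^{\dim\mathfrak n^+}=\C^k$ as complex manifolds, and $G$ is closed, since $B=T^{\C}\ltimes N^+$ is closed and $G$ is the preimage of the closed subgroup $A$ under the projection $B\to T^{\C}$.

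For the decomposition of $S$, take $s\in S$ and write $s^{-1}=u'\,b$ with $u'\in K$ and $b\in A_{\R}N^+$, using Iwasawa; so $s=b^{-1}u'^{-1}$ lies in $BK$. Split the $T^{\C}$-part of the element of $B$ as $a\,t$ with $a\in A$ and $t\in T$, and use that $T$ normalizes $N^+$ to move $t$ to the right and absorb it into $K$. This gives $s=gu$. For uniqueness, if $gu=g'u'$ then $g'^{-1}g\in G\cap K$. A compact subgroup of $B$ has trivial $N^+$-component, being contained in a conjugate of a maximal compact subgroup of $B$, which is $T$; in particular it projects into $T$ under $B\to T^{\C}$. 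So its image in $T^{\C}$ lies in $A\cap T=1$, forcing $G\cap K=1$. The same argument handles finite subgroups: a finite subgroup of $G$ is compact, so it is conjugate into $T$ within $B$, hence lies in $G\cap T$, which is trivial. Alternatively, $G\cong\C^k$ is a simply connected solvable group, and such groups are torsion free.

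The main obstacle I expect is exactly this compact-subgroup step inside $B$. I would prove it directly: $N^+$ is unipotent, so its only compact subgroup is trivial, and the image of a compact $C\subset G$ in $A\cong\C^{r/2}$ is trivial, so $C\subset N^+$ and hence $C=1$. This avoids citing conjugacy of maximal compact subgroups.
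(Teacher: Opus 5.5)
Your proof is correct, but it takes a different route from the paper's. The paper does not use the Iwasawa decomposition or the Borel subgroup $B$ at all. It first shows, by an eigenvalue argument, that $\mathfrak g\cap\Ad(s)\mathfrak k=0$ for every $s\in S$. Hence $G$ acts on $S/K\cong i\mathfrak k\cong\R^{2k}$ with discrete isotropy, so with open orbits, so transitively. The orbit map $G\to S/K$ is then a connected covering of a simply connected space, hence a diffeomorphism. This gives existence and uniqueness of $s=gu$, closedness of $G$, and $G\cong\R^{2k}$ in one stroke. Finite subgroups are excluded because a finite subgroup of $S$ is conjugate into $K$, and so would fix a point of $S/K$.

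You instead build the factorization explicitly from $S=KA_{\R}N^+$ and the splitting $T^{\C}=A\cdot T$. You get uniqueness and torsion-freeness from $G\cap K=1$: project a compact subgroup of $G=A\ltimes N^+$ to the vector group $A$, then use that $N^+$ has no nontrivial compact subgroups. Your route costs more structure theory (Iwasawa, closedness of $B$). In exchange it avoids the covering argument and the conjugacy of finite subgroups into $K$, and it is more explicit. It also actually proves that $\mathfrak a\times T\to T^{\C}$ is injective ($A\cap T=1$ and $\exp|_{\mathfrak a}$ injective), which the paper only asserts. The paper's route yields the statement that $G$ acts simply transitively on $S/K$. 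That is the form reused in the proof of Theorem~\ref{thm:irrational}, where $\widetilde K$ is non-compact and one only needs $\widetilde S/\widetilde K$ to be a vector space.

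Two small points.
\begin{itemize}
\item Your sentence that a compact subgroup of $B$ ``has trivial $N^+$-component'' is false as worded; consider $nTn^{-1}$ with $n\in N^+$. The consequence you draw, that it projects into $T$ under $B\to T^{\C}$, is right, and your final direct argument bypasses this sentence anyway.
\item Closedness of $A$ deserves one line. Since $X\mapsto\exp X_{i\mathfrak t}$ is a homeomorphism $\mathfrak a\to A_{\R}$, the group $A$ is the graph of a continuous map $A_{\R}\to T$ inside $T^{\C}\cong T\times A_{\R}$, hence closed.
\end{itemize}
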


\begin{proof}
First, $\mathfrak g\cap\Ad(s)\mathfrak k=0$ for every $s\in S$. Let
$\xi=a+\nu$ be in the intersection, $a\in\mathfrak a$,
$\nu\in\mathfrak n^+$. The elements of $\Ad(s)\mathfrak k$ are
conjugate to skew-Hermitian matrices, hence diagonalizable with purely
imaginary eigenvalues. In a basis of common eigenvectors of $T$,
suitably ordered, $a$ is diagonal and $\nu$ strictly upper triangular,
so the eigenvalues of $\xi$ are the diagonal entries of $a$. Thus $a$
is skew-Hermitian, $a\in\mathfrak a\cap\mathfrak t=0$, and $\xi=\nu$
is nilpotent and diagonalizable, so $\xi=0$.

Let $G$ act on $S/K$, which by the polar decomposition is
diffeomorphic to $i\mathfrak k\cong\R^{2k}$. The isotropy algebra at
$sK$ is $\mathfrak g\cap\Ad(s)\mathfrak k=0$, and $\dim_\R G=2k$, so
all orbits are open; since $S/K$ is connected there is a single orbit,
and $S=GK$. The orbit map $G\to S/K$ is a covering of a simply
connected space, hence a diffeomorphism. This gives uniqueness, shows
that $G$ is closed and diffeomorphic to $\R^{2k}$, and, since a finite
subgroup of $S$ is conjugate into $K$ and therefore fixes a point of
$S/K$, that $G$ has no non-trivial finite subgroups. The group $G$ is
the product of $A\cong\C^{r/2}$ and of the unipotent group
$\exp\mathfrak n^+$, on which the exponential map is biholomorphic, so
$G\cong\C^k$. Finally $T^{\C}$ centralizes $\mathfrak a$ and preserves
$\mathfrak n^+$; and $\mathfrak t^{\C}=\mathfrak a\oplus\mathfrak t$
gives an isomorphism of real Lie groups $\mathfrak a\times T\to
T^{\C}$, $(a,t)\mapsto\exp(a)t$, so $T^{\C}/A\cong T$ is compact.
\end{proof}

Thus $K\cong G\backslash S$ inherits a complex structure, invariant
under right translations by $K$ (or left translations after identifying
the underlying group by inversion). This is the discovery of Samelson and
Wang \cite{Sam,Wang}: for $K=U(2)$ one obtains homogeneous Hopf surfaces, and
for $K=SU(2)\times SU(2)$ the Calabi--Eckmann manifolds \cite{CE}.

\section{The tangency locus}\label{sec:tangency}

Let $G$ be a Samelson group in the complexification of $K\subset
U(n)$, acting linearly on $\C^n$. For $X\in\mathfrak k$ put
$$
\mu_X(z)=\langle iXz,z\rangle ,
$$
a real quadratic form; $2\mu_X(z)$ is the derivative at $t=0$ of
$\|e^{itX}z\|^2$. The map $z\mapsto(X\mapsto\mu_X(z))$ is, up to a
constant, the moment map of the action of $K$ on $\C^n$. Let
$$
H^*=H^*_K=\{z\in\C^n\setminus\{0\}:\ \mu_X(z)=0\ \text{for all }
X\in\mathfrak k\},
$$
an intersection of $2k$ real quadrics, invariant under $K$ and under
the homotheties.

\begin{definition}\label{def:admissible}
The group $K\subset U(n)$ is \emph{admissible} if $H^*_K\neq\varnothing$
and $K$ acts on $H^*_K$ with finite stabilizers.
\end{definition}

The terminology is justified by the following proposition, which
shows that for tori this is Meersseman's admissibility
(Definition~\ref{def:Meersseman-admissible}).

\begin{proposition}\label{prop:tori}
Let $K=(S^1)^{2k}$ act diagonally on $\C^n$ with weights
$w_1,\ldots,w_n\in\mathfrak t^*\cong\R^{2k}$, let $G=A$ be a Samelson
group, and let $\Lambda_j\in\mathfrak a^*\cong\C^k$ be the restriction
to $\mathfrak a$ of the complex-linear extension of $w_j$. Then the
action of $G$ is the diagonal action with configuration
$\Lambda=(\Lambda_1,\ldots,\Lambda_n)$, $H^*_K=H_\Lambda$, and $K$ is
admissible if and only if $\Lambda$ is admissible in the sense of
Meersseman.
\end{proposition}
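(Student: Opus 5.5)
The plan is to prove the three assertions in turn, working in the diagonal setting throughout.

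\emph{Step 1: the action of $G$.} Since $K=(S^1)^{2k}$ is abelian, $\mathfrak s=\mathfrak t^{\C}$ consists of diagonal matrices and $\mathfrak n^+=0$, so $\mathfrak g=\mathfrak a$. An element $a\in\mathfrak a$ acts on the coordinate $z_j$ by multiplication by the complex-linear extension of $w_j$ evaluated at $a$ (with the convention that $X\in\mathfrak t$ acts by $i w_j(X)$, or $2\pi i w_j(X)$; the factor $i$ is absorbed into the complex-linear extension). Choosing a complex basis of $\mathfrak a\cong\C^k$, this is exactly the diagonal action with configuration $\Lambda$, that is, $\exp(a)\cdot z=(z_je^{\Lambda_j(a)})$. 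This step is immediate.

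\emph{Step 2: $H^*_K=H_\Lambda$.} For $X\in\mathfrak t$ we have $\mu_X(z)=\langle iXz,z\rangle=-\sum_j w_j(X)|z_j|^2$. So $z\in H^*_K$ if and only if $\sum_j w_j|z_j|^2=0$ in $\mathfrak t^*$. Now restriction from $\mathfrak t^*$ to $\mathfrak a$ via the complex-linear extension is a real-linear map $\mathfrak t^*\to\mathfrak a^*$, both of real dimension $2k$. It is injective: if $w^{\C}|_{\mathfrak a}=0$, then $w^{\C}$ vanishes on $\mathfrak a$ and on $i\mathfrak a$ (by complex linearity), and it is real on $\mathfrak t$. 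Since $\mathfrak t^{\C}=\mathfrak a\oplus\mathfrak t$ as real spaces, each $X\in\mathfrak t$ can be written $X=a+Y$ with $a\in\mathfrak a$, $Y\in\mathfrak t$. Then $iX=ia+iY$, and $iY\in i\mathfrak t$, while $w^{\C}(iY)=iw(Y)$ is purely imaginary; this forces $w=0$. A cleaner route is to note that $\mathfrak t^{\C}=\mathfrak a\oplus\bar{\mathfrak a}$ fails in general, so I would instead argue by dimension count directly: the map $\mathfrak t^*\to\mathfrak a^*$ is an isomorphism because $\mathfrak a\to\mathfrak t^{\C}/i\mathfrak t\cong\mathfrak t$ is a real isomorphism, which holds since $\mathfrak a\cap i\mathfrak t=i(i\mathfrak a\cap\mathfrak t)=0$. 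Hence $\sum w_j|z_j|^2=0$ if and only if $\sum\Lambda_j|z_j|^2=0$, which gives $H^*_K=H_\Lambda$. Injectivity of this linear isomorphism is the one point requiring care.

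\emph{Step 3: admissibility.} Via the real isomorphism of Step 2, convex-hull conditions on the $w_j$ and on the $\Lambda_j$ coincide, so it suffices to work with the $w_j$. $H^*\neq\varnothing$ means that some nonnegative combination of the $w_j$, not all coefficients zero, vanishes; this is $0\in\operatorname{Conv}\{w_j\}$, the Siegel condition. For the stabilizers: the stabilizer in $K$ of $z$ has Lie algebra $\bigcap_{z_j\neq0}\ker w_j$. It is finite if and only if the $w_j$ with $z_j\ne0$ span $\mathfrak t^*$. Suppose weak hyperbolicity fails, say $0\in\operatorname{Conv}$ of at most $2k$ weights. By Carathéodory we may take a minimal such set with positive coefficients; minimality makes it affinely independent, so it has at most $2k$ elements spanning a space of dimension less than $2k$. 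Setting $|z_j|^2$ equal to these coefficients gives a point of $H^*$ with infinite stabilizer. Conversely, if $z\in H^*$ has infinite stabilizer, then the weights $\{w_j:z_j\neq0\}$ lie in a proper subspace of dimension at most $2k-1$, and $0$ is a positive combination of them. Carathéodory in that subspace yields at most $2k$ of them with $0$ in their convex hull, contradicting weak hyperbolicity. Hence $K$ is admissible if and only if $\Lambda$ is admissible in the sense of Meersseman.
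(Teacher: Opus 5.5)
Your proof is correct and is essentially the paper's: the same formula $\mu_X(z)=-\sum_j w_j(X)|z_j|^2$, the same identification of $H^*_K\neq\varnothing$ with the Siegel condition, and the same Carath\'eodory argument in both directions for the stabilizers. The only variation is that you prove $w\mapsto w^{\C}|_{\mathfrak a}$ is an isomorphism $\mathfrak t^*\to\mathfrak a^*$ via $\mathfrak a\cap i\mathfrak t=0$, whereas the paper uses $\mathfrak t^{\C}=\mathfrak a\oplus\overline{\mathfrak a}$: from $\mathfrak a\cap i\mathfrak t=0$ the map $\operatorname{Re}\colon\mathfrak a\to\mathfrak t$ is bijective, and $\operatorname{Re}w^{\C}(a)=w(\operatorname{Re}a)$ then gives injectivity, a line you should write out.

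Delete your aside that this decomposition ``fails in general''; it is false. It always holds, because $\mathfrak a\cap\overline{\mathfrak a}$ is conjugation-stable with real points in $\mathfrak a\cap\mathfrak t=0$, and $\dim_{\C}\mathfrak a=k$. Your garbled first injectivity attempt should either be deleted or repaired: decompose $iX=a+Y$ with $a\in\mathfrak a$, $Y\in\mathfrak t$; then $iw(X)=w^{\C}(iX)=w(Y)\in\R$, hence $w(X)=0$.
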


\begin{proof}
The first assertion is clear. Since $\mathfrak a\cap\mathfrak t=0$,
also $\mathfrak a\cap\overline{\mathfrak a}=0$, so
$\mathfrak t^{\C}=\mathfrak a\oplus\overline{\mathfrak a}$, and a real
weight vanishing on $\mathfrak a$ vanishes on $\overline{\mathfrak a}$.
Hence $w\mapsto w|_{\mathfrak a}$ is an isomorphism of real vector
spaces $\mathfrak t^*\to\mathfrak a^*$, and convex-hull conditions on
the $w_j$ and on the $\Lambda_j$ are equivalent. The quadrics are
$\mu_X(z)=-\sum_jw_j(X)|z_j|^2$, so $H^*_K=\{\sum_jw_j|z_j|^2=0\}=
H_\Lambda$, which is non-empty if and only if the Siegel condition
holds. The stabilizer of $z$ is finite if and only if the weights $w_j$
with $z_j\neq0$ span $\R^{2k}$. If at some $z\in H^*_K$ they did not,
they would lie in a hyperplane while containing $0$ in their convex
hull, and Carath\'eodory's theorem in dimension $2k-1$ would put $0$
in the convex hull of at most $2k$ of them, contradicting weak
hyperbolicity. Conversely, if $0$ is in the convex hull of $2k$
weights, these are linearly dependent, and a point of $H^*_K$
supported on them has an infinite stabilizer.
\end{proof}

\begin{proposition}\label{prop:tangency}
Let $z\in\C^n\setminus\{0\}$.
\begin{enumerate}
\item The orbit $G\cdot z$ is tangent at $z$ to the sphere through $z$
if and only if $z\in H^*$.
\item For $z\in H^*$ and $\xi\in\mathfrak g$,
$\dfrac{d^2}{dt^2}\Big|_{t=0}\|e^{t\xi}z\|^2=4\|H_\xi z\|^2$.
\item The map $\xi\mapsto H_\xi$ is a real isomorphism of $\mathfrak g$
onto $i\mathfrak k$. Hence, if the stabilizer $K_z$ is finite, the
tangency at $z\in H^*$ is a leafwise non-degenerate minimum, and near
$z$ the set $H^*$ is a smooth submanifold of real codimension $2k$
transverse to the orbits of $G$.
\end{enumerate}
\end{proposition}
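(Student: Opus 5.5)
The plan is to reduce all three parts to a single observation: the real decomposition $\mathfrak s=\mathfrak g\oplus\mathfrak k$ coming from Lemma~\ref{lem:Samelson}, combined with the complex linearity of $B\mapsto\langle Bz,z\rangle$.

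\emph{Part (1).} Since $\mathfrak g$ is a complex subspace, $G\cdot z$ is tangent to the sphere at $z$ iff $\operatorname{Re}\langle\xi z,z\rangle=0$ for all $\xi\in\mathfrak g$. Replacing $\xi$ by $i\xi$, this is the same as $\varphi(\xi)=0$ for all $\xi\in\mathfrak g$, where $\varphi(B)=\langle Bz,z\rangle$ is a complex-linear functional on $\mathfrak s$. Its real part vanishes automatically on $\mathfrak k$, because skew-Hermitian matrices give imaginary values. By Lemma~\ref{lem:Samelson} we have $\mathfrak s=\mathfrak g\oplus\mathfrak k$ as real vector spaces, since $\mathfrak g\cap\mathfrak k=0$ and the dimensions add up. Hence $\operatorname{Re}\varphi|_{\mathfrak g}=0$ iff $\operatorname{Re}\varphi\equiv0$ on $\mathfrak s$. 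By complex linearity this holds iff $\varphi\equiv0$ on $\mathfrak s=\mathfrak k\oplus i\mathfrak k$. That is exactly the condition $\mu_X(z)=\langle iXz,z\rangle=0$ for all $X\in\mathfrak k$, i.e.\ $z\in H^*$.

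\emph{Part (2).} Start from the formula in the introduction,
$$
\tfrac{d^2}{dt^2}\big|_{t=0}\|e^{t\xi}z\|^2=4\|H_\xi z\|^2+2\langle[H_\xi,K_\xi]z,z\rangle .
$$
Because $\mathfrak s$ is closed under $*$, we have $H_\xi=\tfrac12(\xi+\xi^*)\in\mathfrak s$. Being Hermitian, it lies in $i\mathfrak k$, and likewise $K_\xi\in\mathfrak k$. Therefore $[H_\xi,K_\xi]\in[i\mathfrak k,\mathfrak k]\subset i\mathfrak k$, say $[H_\xi,K_\xi]=iX$ with $X\in\mathfrak k$. The commutator term is then $2\mu_X(z)$, which vanishes on $H^*$.

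\emph{Part (3).} The map $\xi\mapsto H_\xi$ is real linear from $\mathfrak g$ to $i\mathfrak k$. If $H_\xi=0$, then $\xi\in\mathfrak g\cap\mathfrak k=0$, so the map is injective, and both spaces have real dimension $2k$, so it is an isomorphism. For non-degeneracy, suppose $H_\xi z=0$ with $\xi\neq0$. Writing $H_\xi=iX$ with $0\neq X\in\mathfrak k$ gives $Xz=0$, so $X$ lies in the Lie algebra of $K_z$, contradicting finiteness of $K_z$. The same argument shows $\xi z\neq0$ for $\xi\neq0$: if $\xi z=0$ the second derivative vanishes, which forces $H_\xi z=0$. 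So the orbit map is an immersion at $z$. Since $z$ is critical for $\|\cdot\|^2$ on the orbit, the leafwise Hessian is the quadratic form $\xi\mapsto4\|H_\xi z\|^2$, and it is positive definite.

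For smoothness of $H^*$, look at the differential of the moment map, $d\mu(v)(X)=2\operatorname{Re}\langle iXz,v\rangle$. Its failure to be surjective would give some $X\neq0$ with $iXz\perp_{\R}\C^n$, i.e.\ $Xz=0$, which is excluded. So $H^*$ is a submanifold of codimension $2k$ near $z$.

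Transversality is the main computational step, and I expect it to be the main obstacle. We need $d\mu$ to be injective on $T_z(G\cdot z)=\{\xi z\}$, which has real dimension $2k$. I would test against $X=-iH_\xi\in\mathfrak k$:
$$
d\mu(\xi z)(X)=2\operatorname{Re}\langle H_\xi z,\xi z\rangle=2\|H_\xi z\|^2+2\operatorname{Re}\langle H_\xi z,K_\xi z\rangle .
$$
The last term equals $-2\operatorname{Re}\langle K_\xi H_\xi z,z\rangle$. To handle it, split $K_\xi H_\xi=\tfrac12(K_\xi H_\xi+H_\xi K_\xi)+\tfrac12[K_\xi,H_\xi]$. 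The anticommutator is skew-Hermitian, so its pairing with $z$ is imaginary and contributes nothing to the real part. The commutator lies in $i\mathfrak k$, so its pairing is again a $\mu$-value, which vanishes on $H^*$. Hence $d\mu(\xi z)(X)=2\|H_\xi z\|^2>0$ for $\xi\neq0$. This shows $T_zH^*\cap T_z(G\cdot z)=0$, and the dimensions $(n\text{ real}\ 2n-2k)+2k=2n$ then give transversality.
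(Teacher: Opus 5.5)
Your proof is correct and follows the paper's argument essentially step for step. You use the real splitting $\mathfrak s=\mathfrak g\oplus\mathfrak k$ plus complex linearity of $B\mapsto\langle Bz,z\rangle$ for (1), and the Hermitian commutator $[H_\xi,K_\xi]\in i\mathfrak k$ for (2). For transversality you take the same test element $iX=H_\xi$, with the commutator term vanishing on $H^*$; your extra check that $\xi z\neq0$ for $\xi\neq0$, which closes the dimension count, is a detail the paper leaves implicit.
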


\begin{proof}
(1) The orbit is tangent to the sphere at $z$ when
$\operatorname{Re}\langle\xi z,z\rangle=0$ for all $\xi\in\mathfrak g$.
Since $\langle Xz,z\rangle$ is purely imaginary for $X\in\mathfrak k$,
this holds for all $\xi\in\mathfrak g+\mathfrak k=\mathfrak s$; as
$\mathfrak s$ is complex we may replace $\xi$ by $i\xi$, and obtain
$\langle\xi z,z\rangle=0$ for all $\xi\in\mathfrak s$. Taking $\xi=iX$
gives $\mu_X(z)=0$. The converse is the same argument read backwards.
Here $\mathfrak g+\mathfrak k=\mathfrak s$ because
$\mathfrak g\cap\mathfrak k=0$ (Lemma~\ref{lem:Samelson}) and the
dimensions add up.

(2) For any matrix $B=H+K$ as in the conventions, a direct computation
gives
$$
\frac{d^2}{dt^2}\Big|_{t=0}\|e^{tB}z\|^2=4\|Hz\|^2+2\langle[H,K]z,z\rangle .
$$
For $B=\xi\in\mathfrak g\subset\mathfrak s$, both $H_\xi$ and $K_\xi$
belong to $\mathfrak s$, which is closed under adjoints. So
$[H_\xi,K_\xi]$ is a Hermitian element of $\mathfrak s$, that is,
$iX$ with $X\in\mathfrak k$, and the last term is $2\mu_X(z)=0$.

(3) If $H_\xi=0$ then $\xi\in\mathfrak g\cap\mathfrak k=0$, and both
spaces have real dimension $2k$. If $K_z$ is finite, no non-zero
element of $i\mathfrak k$ kills $z$, so $H_\xi z\neq0$ for $\xi\neq0$
and the Hessian in (2) is positive definite. The gradient of $\mu_X$
at $z$ is $2iXz$, so the differentials of the $2k$ quadrics are
independent at $z$, and $H^*$ is smooth of codimension $2k$ near $z$,
with $T_zH^*=\{v:\operatorname{Re}\langle iXz,v\rangle=0\ \forall
X\in\mathfrak k\}$. It is transverse to the orbit: if $\xi z\in T_zH^*$,
take $iX=H_\xi$; then, by the computation in (2),
$0=\operatorname{Re}\langle H_\xi z,\xi z\rangle=\|H_\xi z\|^2+
\tfrac12\langle[H_\xi,K_\xi]z,z\rangle=\|H_\xi z\|^2$, so $\xi=0$.
\end{proof}

\begin{remark}[The commutator term]\label{rem:commutator}
The proof of (2) shows what makes Samelson groups special. For a
general linear action the second derivative contains the term
$\langle[H,K]z,z\rangle=2\operatorname{Re}\langle Hz,Kz\rangle$,
which measures the interaction of the dilation and rotation parts.
Since $[H,K]$ is Hermitian of trace zero, this term has no sign unless
it vanishes identically, and it produces saddles and degenerate
tangencies; here it vanishes at the tangency points, because the
commutator is again one of the quadrics defining them. In the diagonal
case $[H,K]=0$, so $t\mapsto\|e^{t\xi}z\|^2$ is convex for every $z$,
which gives at once the uniqueness of the critical point on each
Siegel leaf. For a non-abelian Samelson group the commutator term only
vanishes on $H^*$, and off $H^*$ the norm need not be convex along the
orbits. For instance, if $K$ contains $U(3)$ acting on a summand $\C^3$
and the positive roots are chosen so that $\mathfrak n^+$ contains
$\nu=E_{12}+E_{23}$, then for $z=(2,0,-1)$ in that summand
$\|e^{t\nu}z\|^2=5-t^2+t^4/4$, which has a local maximum at $t=0$.
This is why uniqueness is proved in Proposition~\ref{prop:once} by
working in $S$ along the Hermitian one-parameter groups $e^{tP}$,
$P\in i\mathfrak k$, for which $K_P=0$ and there is no commutator term
(Lemma~\ref{lem:convexity}).
\end{remark}

\section{Non-abelian LVM manifolds}\label{sec:main}

\begin{lemma}[Convexity]\label{lem:convexity}
Let $P$ be Hermitian and $z\in\C^n$. Then $f(t)=\|e^{tP}z\|^2$ is
convex, with $f''(t)=4\|Pe^{tP}z\|^2$, and if $f$ has two distinct
critical points then $Pz=0$.
\end{lemma}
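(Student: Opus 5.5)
The plan is to differentiate directly, using that $P$ is Hermitian, so $e^{tP}$ is Hermitian and commutes with $P$. Write $f(t)=\langle e^{tP}z,e^{tP}z\rangle=\langle e^{2tP}z,z\rangle$, since $(e^{tP})^*=e^{tP}$. Then
$$
f'(t)=2\langle Pe^{2tP}z,z\rangle=2\langle Pe^{tP}z,e^{tP}z\rangle ,
$$
which is real because $P$ is Hermitian. Differentiating once more gives
$$
f''(t)=4\langle P^2e^{2tP}z,z\rangle=4\|Pe^{tP}z\|^2\ge0 .
$$
Hence $f$ is convex. Equivalently, one can diagonalize $P=U\diag(\lambda_1,\ldots,\lambda_n)U^*$ with $U$ unitary and $\lambda_j$ real, and write $f(t)=\sum_j|w_j|^2e^{2\lambda_jt}$ with $w=U^*z$. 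Then $f$ is a non-negative combination of convex exponentials, and $f''(t)=\sum_j4\lambda_j^2|w_j|^2e^{2\lambda_jt}$ agrees with the formula above.

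For the last assertion, suppose $f'(t_0)=f'(t_1)=0$ with $t_0<t_1$. Since $f'$ is non-decreasing, it vanishes on all of $[t_0,t_1]$, so $f''\equiv0$ there. In the eigenbasis this reads $\sum_j\lambda_j^2|w_j|^2e^{2\lambda_jt}=0$ for $t\in[t_0,t_1]$. Every term is non-negative, so $\lambda_j^2|w_j|^2=0$ for each $j$, that is, $\lambda_jw_j=0$ for all $j$. Therefore $Pz=U\diag(\lambda_j)w=0$. Without the eigenbasis, one can instead note that $f''(t_0)=0$ gives $Pe^{t_0P}z=0$, and since $e^{t_0P}$ commutes with $P$ and is invertible, this yields $Pz=0$.

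No step is a real obstacle. The only point needing care is that the adjoint of $e^{tP}$ is $e^{tP}$ itself, which is exactly where the absence of a skew-Hermitian part, and hence of the commutator term of Remark~\ref{rem:commutator}, enters.
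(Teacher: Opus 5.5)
Your proposal is correct and follows essentially the same route as the paper: write $f(t)=\langle e^{2tP}z,z\rangle$, differentiate twice to get $f''(t)=4\|Pe^{tP}z\|^2$, observe that two critical points force $f'$ to be constant on the interval between them, and conclude $Pz=0$ from $Pe^{tP}z=0$ and the invertibility of $e^{tP}$. Your eigenbasis computation is a harmless extra check that the paper omits.
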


\begin{proof}
$f(t)=\langle e^{2tP}z,z\rangle$, so
$f'(t)=2\langle Pe^{tP}z,e^{tP}z\rangle$ and
$f''(t)=4\|Pe^{tP}z\|^2$. A convex function with two critical points
is constant between them, so $Pe^{tP}z=0$ for some $t$, hence $Pz=0$.
\end{proof}

\begin{proposition}\label{prop:once}
Let $z_0\in H^*$ and $s\in S$.
\begin{enumerate}
\item If $sz_0\in H^*$, then $sz_0\in Kz_0$.
\item The points of minimal norm in $S\cdot z_0$ are exactly those of
$K\cdot z_0$.
\item If $K_{z_0}$ is finite, the stabilizer of $z_0$ in $S$ is
$K_{z_0}$.
\end{enumerate}
\end{proposition}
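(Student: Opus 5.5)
We reduce everything to the polar decomposition in $S$ and the convexity Lemma~\ref{lem:convexity}, the standard Kempf--Ness argument. Write $s=u\,e^{P}$ with $u\in K$ and $P\in i\mathfrak k$. Since $K$ preserves norms and $H^*$, we have $sz_0\in H^*$ if and only if $e^{P}z_0\in H^*$, and $\|sz_0\|=\|e^Pz_0\|$. It therefore suffices to study the function $f(t)=\|e^{tP}z_0\|^2$, which is convex by Lemma~\ref{lem:convexity}. Its derivative is $f'(t)=2\langle Pe^{tP}z_0,e^{tP}z_0\rangle$. Writing $P=iX$ with $X\in\mathfrak k$, this is $f'(t)=2\mu_X(e^{tP}z_0)$. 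Hence $f'(t)=0$ whenever $e^{tP}z_0\in H^*$.

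For (1), suppose $sz_0\in H^*$. Then $e^Pz_0\in H^*$, and so $f'(0)=f'(1)=0$, because $z_0\in H^*$ as well. By Lemma~\ref{lem:convexity}, two distinct critical points of $f$ force $Pz_0=0$. Then $e^{P}z_0=z_0$, so $sz_0=uz_0\in Kz_0$.

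For (2), $f'(0)=0$ together with convexity shows that $t=0$ is a global minimum of $f$. Hence $\|e^Pz_0\|\ge\|z_0\|$ for every $P\in i\mathfrak k$, so $\|sz_0\|\ge\|z_0\|$ for all $s\in S$, and every point of $Kz_0$ attains the minimum. Conversely, suppose $\|sz_0\|=\|z_0\|$. Then $f(1)=f(0)$, and since $f$ is convex with a minimum at $0$, it is constant on $[0,1]$. This gives $f''\equiv 0$ there, so $Pz_0=0$ by the formula for $f''$, and therefore $sz_0=uz_0\in Kz_0$. One could alternatively use that a norm-minimizing point of an orbit is a critical point of the norm along $S$, hence lies in $H^*$ by the computation in Proposition~\ref{prop:tangency}(1), and then apply (1). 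The direct argument above avoids this.

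For (3), take $s=ue^P$ with $sz_0=z_0$. By (1) we have $e^Pz_0\in H^*$, and the argument there gives $Pz_0=0$. Then $uz_0=z_0$, so $u\in K_{z_0}$. Now $Pz_0=0$ means that the infinitesimal element $-iP\in\mathfrak k$ satisfies $(-iP)z_0=0$, so the one-parameter group $e^{-itP}\subset K$ fixes $z_0$. Finiteness of $K_{z_0}$ then forces $P=0$. Hence $s=u\in K_{z_0}$, and the reverse inclusion is trivial.

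The only delicate point is this last step, namely the passage from $Pz_0=0$ to $P=0$ via the finiteness of the stabilizer. It works because $P\in i\mathfrak k$, so $iP$ generates a one-parameter subgroup of $K$ inside $K_{z_0}$; that subgroup is trivial exactly when $P=0$, since $\exp$ is injective on a line in $\mathfrak k$ near $0$. Everything else is immediate from Lemma~\ref{lem:convexity} and the uniqueness of the polar decomposition.
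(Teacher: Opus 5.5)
Your proposal is correct and follows the paper's proof essentially step for step. Both write $s=ue^{P}$, use the convexity of $t\mapsto\|e^{tP}z_0\|^2$ with its critical point at $t=0$, obtain a second critical point at $t=1$ (from $e^Pz_0\in H^*$ in (1), or from $f(1)=f(0)$ in (2)) to force $Pz_0=0$, and in (3) use finiteness of $K_{z_0}$ to conclude that $P=iX$ must vanish.
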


\begin{proof}
Write $s=ue^P$ and $f(t)=\|e^{tP}z_0\|^2$. Since $z_0\in H^*$,
$f'(0)=2\langle Pz_0,z_0\rangle=0$, so $0$ is a global minimum of $f$
and $\|sz_0\|\geq\|z_0\|$. If $sz_0\in H^*$, then $w=e^Pz_0=u^{-1}sz_0\in
H^*$ and $f'(1)=2\langle Pw,w\rangle=0$; if $sz_0$ has minimal norm,
then $f(1)=f(0)$. In both cases $t=1$ is a second critical point, so
$Pz_0=0$ and $sz_0=uz_0$. This proves (1) and (2). For (3), if
$sz_0=z_0$ then $Pz_0=0$; writing $P=iX$ with $X\in\mathfrak k$, the
finiteness of $K_{z_0}$ gives $X=0$, so $s=u\in K$.
\end{proof}

Statements (1) and (2) are part of the theorem of Kempf and Ness
\cite{KN}. Let
$$
\cS=\cS_K=S\cdot H^*=G\cdot H^*,
$$
the second equality because $S=GK$ and $K$ preserves $H^*$.

\begin{theorem}\label{thm:main}
Let $K\subset U(n)$ be admissible, of dimension $2k$, and let $G$ be a
Samelson group in its complexification.
\begin{enumerate}
\item The map $\Phi:G\times H^*\to\C^n$, $(g,z)\mapsto gz$, is a
diffeomorphism onto the open set $\cS$. Hence $G$ acts freely on
$\cS$, and its orbits are the leaves, closed in $\cS$ and
biholomorphic to $\C^k$,
of a regular holomorphic foliation $\cF_G$.
\item The restriction of $\|z\|^2$ to each leaf of $\cF_G$ has a unique
critical point, a non-degenerate minimum, and the set of these minima
is $H^*$. Thus $H^*$ is the tangency locus of $\cF_G$ with the spheres,
and it is a global transversal meeting every leaf exactly once.
\item The quotient $N_G=H^*/\C^*=(H^*\cap S^{2n-1})/S^1$ is a compact
smooth manifold of dimension $2(n-k-1)$, transverse in $\PP^{n-1}_{\mathbb C}$ to
the projectivized foliation; with the structure of
Lemma~\ref{lem:Haefliger} it is a compact complex manifold of
dimension $n-k-1$, biholomorphic to the space of leaves of the
projectivized foliation.
\end{enumerate}
\end{theorem}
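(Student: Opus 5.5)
The plan is to derive everything from Propositions~\ref{prop:tangency} and \ref{prop:once} together with the decomposition $S=GK$ of Lemma~\ref{lem:Samelson}. We proceed in the order (1), (2), (3).

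\emph{Part (1).} First, $\Phi$ is injective. Suppose $gz=g'z'$ with $z,z'\in H^*$. Then $z'=sz$ with $s=g'^{-1}g\in S$. By Proposition~\ref{prop:once}(1), $z'=uz$ for some $u\in K$, so $s^{-1}u$ lies in the $S$-stabilizer of $z$. By Proposition~\ref{prop:once}(3), this stabilizer is $K_z$, a finite group. Hence $g'^{-1}g\in K$, and since $G\cap K=\{1\}$ (uniqueness in Lemma~\ref{lem:Samelson}) we get $g=g'$, and then $z=z'$.

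Next, $\Phi$ is a local diffeomorphism. Admissibility and Proposition~\ref{prop:tangency}(3) make $H^*$ a smooth submanifold of codimension $2k$. At $(1,z)$ the differential of $\Phi$ is $(\xi,v)\mapsto \xi z+v$. This is injective because $T_zH^*$ is transverse to $\mathfrak g z$ and $\xi\mapsto\xi z$ is injective (again Proposition~\ref{prop:tangency}(3)). The dimensions add up to $2n$. At a general point $(g,z)$, equivariance $\Phi(hg,z)=h\Phi(g,z)$ reduces the question to $g=1$.

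So $\Phi$ is an injective local diffeomorphism, hence a diffeomorphism onto its open image, which is $G\cdot H^*=\cS$. Freeness of the action, closedness of the orbits in $\cS$ (they are the images of the closed sets $G\times\{z\}$), and $G$-orbits $\cong G\cong\C^k$ then follow directly. The foliation is holomorphic because it is given by a free holomorphic action with injective infinitesimal action.

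\emph{Part (2).} Let $z=gz_0$ lie on the leaf through $z_0\in H^*$. The point $z$ is a critical point of $\|\cdot\|^2$ restricted to the leaf exactly when the orbit is tangent to the sphere at $z$. By Proposition~\ref{prop:tangency}(1) this means $z\in H^*$, and injectivity of $\Phi$ then forces $z=z_0$. So each leaf has exactly one critical point, and it is non-degenerate by Proposition~\ref{prop:tangency}(3). It is in fact a global minimum: Proposition~\ref{prop:once}(2) gives $\|sz_0\|\ge\|z_0\|$ on all of $S\cdot z_0$, which contains the leaf.

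\emph{Part (3).} This is where the real work lies. First, compactness: $H^*\cap S^{2n-1}$ is closed in the sphere, being an intersection of quadrics with the origin excluded automatically. Next, $S^1$ acts on it freely up to finite stabilizers. The scalar rotations need not lie in $K$; I would note that the $\C^*$-action commutes with $G$ and that $\C^*$ preserves $H^*$. Then $\C^*\times G$ acts on $\cS$. Its orbits projectivize to the leaves of the projectivized foliation on $\PP(\cS)$.

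The key claim is that $\C^*$ acts freely on the leaf space $H^*$ modulo scalars, equivalently that $H^*/\C^*$ is a manifold. Suppose $\lambda z=gz$ with $z\in H^*$. Then $\|\lambda z\|$ is the minimal norm on the orbit, so $|\lambda|=1$. Moreover $\lambda^{-1}g$ is in the stabilizer of $z$ in $S\cdot\C^*$. I would handle this by applying the argument of Proposition~\ref{prop:once} to the group $\C^*S$, whose compact form $S^1K$ has the same zero set, to conclude $g\in S^1K$, hence $g=1$ by the uniqueness in $S^1K$-versions of Lemma~\ref{lem:Samelson}.

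With this in hand, the projectivized leaves through $\PP(H^*)$ are transverse to $\PP(H^*)=N_G$, of real dimension $2n-2k-2$, and each such leaf meets $N_G$ once. Lemma~\ref{lem:Haefliger} then gives the complex structure of dimension $n-k-1$ and the identification with the leaf space.

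The main obstacle is this freeness of $S^1$ on $H^*\cap S^{2n-1}$ modulo $G$, i.e.\ excluding $e^{i\theta}z=gz$ with $g\ne1$. If the direct argument fails, I would use the finite-stabilizer hypothesis to show that $e^{i\theta}z=gz$ forces $g$ into a compact, hence finite, subgroup. Lemma~\ref{lem:Samelson} says $G$ has no non-trivial finite subgroups, so $g=1$.
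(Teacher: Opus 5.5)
Your parts (1) and (2) are correct and follow the paper's proof essentially verbatim. Injectivity of $\Phi$ comes from Proposition~\ref{prop:once}(1),(3) and $G\cap K=\{e\}$. The local diffeomorphism property comes from $\C^n=T_zH^*\oplus\mathfrak g z$ and equivariance. Uniqueness of the critical point comes from Proposition~\ref{prop:tangency}(1) and injectivity.

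The gap is in (3), at the step you call the key claim. You propose applying Proposition~\ref{prop:once} to $\C^*S$, asserting that its compact form $S^1K$ has the same zero set. This is false. The scalar generator $iI$ gives $\mu_{iI}(z)=\langle -z,z\rangle=-\|z\|^2$, so the zero set for $S^1K$ is $\{0\}$. Concretely, for $P=I$ one has $f(t)=e^{2t}\|z\|^2$ with $f'(0)=2\|z\|^2\neq0$. So points of $H^*$ do not minimize the norm on $\C^*S$-orbits, and the convexity argument collapses. There is also no ``$S^1K$-version'' of Lemma~\ref{lem:Samelson}. Since $I\notin\mathfrak s$ when $H^*\neq\varnothing$, one has $\dim_\R G+\dim_\R S^1K=4k+1<4k+2=\dim_\R\C^*S$. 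Your fallback (``forces $g$ into a compact, hence finite, subgroup'') is not specified enough to replace this. Separately, ``$H^*/\C^*$ is a manifold'' is not the issue: the scalar circle acts freely on $\C^n\setminus\{0\}$, so that part is automatic.

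The missing observation is that $H^*$ is invariant under homotheties, so nothing beyond your part (1) is needed. Suppose $gz=\lambda z'$ with $z,z'\in H^*$ and $\lambda\in\C^*$. Then $(g,z)$ and $(e,\lambda z')$ are points of $G\times H^*$ with the same image under $\Phi$, hence $g=e$ and $z=\lambda z'$. This gives two things at once:
\begin{itemize}
\item freeness of the projectivized action along $\PP(H^*)$, which is your case $z'=z$;
\item that every projectivized leaf meets $N_G$ exactly once, the case $z'\neq z$, which you assert but never argue.
\end{itemize}
Transversality in $\PP^{n-1}$, which you also only assert, follows from the same splitting. By $\C^*$-invariance, $\C z\subset T_zH^*$, so $\mathfrak g z\cap\C z=0$. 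Hence $T_zH^*/\C z$ is a complement to the $k$-dimensional image of $\mathfrak g z$ in $T_{[z]}\PP^{n-1}=\C^n/\C z$. With these two lines, which are exactly the paper's argument, Lemma~\ref{lem:Haefliger} applies as you intended.
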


\begin{proof}
(1) By Proposition~\ref{prop:tangency}(3), $H^*$ is a manifold of
dimension $2n-2k$ transverse to the orbits, so $d\Phi_{(e,z)}$,
$(\xi,v)\mapsto\xi z+v$, is an isomorphism; by equivariance $\Phi$ is a
local diffeomorphism, and $\cS$ is open. If $gz=g'z'$ with
$z,z'\in H^*$, then $z'=hz$ with $h=g'^{-1}g$, and by
Proposition~\ref{prop:once}(1) $z'=uz$ for some $u\in K$; then
$u^{-1}h$ fixes $z$, so $u^{-1}h\in K$ by
Proposition~\ref{prop:once}(3), and $h\in G\cap K=\{e\}$ by
Lemma~\ref{lem:Samelson}. So $\Phi$ is injective, hence a
diffeomorphism, and in these coordinates $G$ acts by left translations
on the first factor.

(2) By Proposition~\ref{prop:tangency}(1) the critical points of the
norm on a leaf are its points in $H^*$; by (1) there is exactly one,
and it is a non-degenerate minimum by
Proposition~\ref{prop:tangency}(3).

(3) The homotheties commute with $G$, preserve $H^*$ and act freely on
it. For $z\in H^*$, (1) gives $\C^n=T_zH^*\oplus\mathfrak g z$, and
$\C z\subset T_zH^*$; passing to $\PP^{n-1}_{\mathbb C}$, the image of $H^*$ is a
transversal to the projectivized foliation, which is regular of
dimension $k$ near it. If $[gz]=[z']$ with $z,z'\in H^*$, then
$gz=\lambda z'\in H^*$, and (1) gives $g=e$; so every leaf of the
projectivized foliation of $\PP(\cS)$ meets $N_G$ exactly once, and
Lemma~\ref{lem:Haefliger} applies. Compactness is clear, since
$H^*\cap S^{2n-1}$ is closed in the sphere.
\end{proof}

\begin{definition}\label{def:nonabelianLVM}
Let $K\subset U(n)$ be an admissible compact connected group of
dimension $2k$, with complexification $S$, and let $G\subset S$ be a
Samelson group (Definition~\ref{def:Samelson}). The
\emph{non-abelian LVM manifold} of $(K,G)$ is the compact complex
manifold
$$
N_G=H^*/\C^*=(H^*\cap S^{2n-1})/S^1
$$
of dimension $n-k-1$ given by Theorem~\ref{thm:main}(3), endowed with
the complex structure transverse to the projectivized foliation
$\cF_G$ (Lemma~\ref{lem:Haefliger}).
\end{definition}

As a smooth manifold, $N_G$ depends only on $K$ and the representation;
its complex structure depends on the choice of $G$, that is, of
$\mathfrak a$. When $K$ is a torus, $G=A$ acts diagonally, and $N_G$ is
the LVM manifold of the configuration described in
Proposition~\ref{prop:tori}.

\begin{remark}
By the theorem of Kempf and Ness \cite{KN}, $\cS$ is the complement of
the null cone of $S$, the set of points whose $S$-orbit has $0$ in its
closure, and for admissible $K$ every semistable point is stable:
if the orbit of a semistable point $x$ were not closed, its closure
would contain a closed orbit $S\cdot y$, $y\in H^*$, of strictly
smaller dimension, and the stabilizer of $y$ would be infinite. Thus
$\PP(\cS)$ is the set of stable points of $\PP^{n-1}_{\mathbb C}$ for the action
of $S$ in the sense of geometric invariant theory \cite{MFK}.
\end{remark}

\subsection{Structure}

The group $K\times S^1$ acts on $H^*$ with finite stabilizers: if
$Xz=i\theta z$ with $X\in\mathfrak k$ and $z\in H^*$, then
$0=\mu_X(z)=-\theta\|z\|^2$, so $\theta=0$ and $X=0$. Let
$$
X_K=H^*/(K\times\C^*)=\PP(\cS)/S,
$$
the geometric quotient of the stable points; it is a projective
variety with quotient singularities, of dimension $n-1-2k$ \cite{MFK}.

\begin{theorem}\label{thm:structure}
Let $N_G$ be a non-abelian LVM manifold.
\begin{enumerate}
\item The projection $N_G\to X_K$ is holomorphic; it is a holomorphic
fibre bundle in the orbifold sense whose fibres are the compact Lie
group $K$ with the Samelson complex structure $G\backslash S$, or
quotients of it by finite groups.
\item The compact complex torus $T^{\C}/A$ acts holomorphically on
$N_G$ with finite stabilizers, and the quotient is a holomorphic fibre bundle in the orbifold sense over $X_K$,
with model fibre the flag manifold $K/T$; its geometric fibres may
be finite quotients of this flag manifold.
\item The elements of $K$ which normalize $G$, among them those of $T$,
act on $N_G$ by biholomorphisms.
\item If $K$ is not abelian, $N_G$ is not K\"ahler.
\end{enumerate}
\end{theorem}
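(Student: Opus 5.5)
The plan is to work throughout with the global product chart of Theorem~\ref{thm:main}(1). There $\cS\cong G\times H^*$, and the leaf space of $\cF_G$ on $\cS$ is identified with $H^*$. The projectivized leaf space is $N_G=H^*/\C^*$, and the complex structure on $N_G$ is the one induced from the holomorphic quotient $\PP(\cS)\to\PP(\cS)/G$. All four parts will come from letting the remaining groups $S\supset T^{\C}$ act on this leaf space.

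For (1), note that $S$ acts on $\PP(\cS)$ holomorphically and, since $G\subset S$, the map $N_G\cong\PP(\cS)/G\to\PP(\cS)/S=X_K$ is holomorphic. Its fibre over the class of $[z]$, $z\in H^*$, is $G\backslash S/S_{[z]}$, where the stabilizer $S_{[z]}$ is computed as follows. By Proposition~\ref{prop:once}(3) applied to the action of $K\times\C^*$ on $H^*$, this stabilizer is $\C^*$ times a finite subgroup $\Gamma_z\subset K$. The fibre is therefore $G\backslash S/\Gamma_z$ with $\Gamma_z$ finite, a finite quotient of the Samelson manifold $G\backslash S\cong K$.

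Local triviality in the orbifold sense comes from a slice for the free-modulo-finite action of $K\times S^1$ on $H^*\cap S^{2n-1}$. Let $\Sigma$ be a $\Gamma_z$-invariant holomorphic slice in $\PP(\cS)$ for the $S$-action through $[z]$, which exists by Luna's slice theorem, or directly because $S$ acts properly there. Then $S\times_{\Gamma_z}\Sigma$ is an $S$-invariant neighbourhood, and dividing by $G$ on the left gives $(G\backslash S)\times_{\Gamma_z}\Sigma$ holomorphically. The key point I would check carefully is that $G$ acting on the left commutes with $S$ acting on the right in the identification $\PP(\cS)\cong\PP(\cS)$; I would write the action as $g\cdot(s,x)=(gs,x)$ in $S\times_{\Gamma}\Sigma$. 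This slice step is where I expect the main technical work.

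For (2), $T^{\C}$ normalizes $G$ by Lemma~\ref{lem:Samelson}, so it acts holomorphically on $\PP(\cS)/G$. The subgroup $A\subset G$ acts trivially, so $T^{\C}/A$ acts, with finite stabilizers by the same stabilizer computation. The quotient is $\PP(\cS)/(GT^{\C})$. Since $GT^{\C}=B$ is a Borel subgroup of $S$, the fibrewise computation in the slice model gives $B\backslash S/\Gamma_z=(K/T)/\Gamma_z$.

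For (3), an element $u\in K$ normalizing $G$ is unitary, so it preserves $H^*$, the spheres, and $\cF_G$, since $u(gz)=(ugu^{-1})(uz)$. Lemma~\ref{lem:Haefliger} then gives the biholomorphism.

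For (4), suppose $N_G$ is K\"ahler. The fibres in (1) are compact complex submanifolds, or suborbifolds, and hence the restricted K\"ahler form makes a finite cover $K$ with its Samelson structure K\"ahler. To avoid the orbifold issue, I would take a fibre over a point where $\Gamma_z$ is trivial, or pass to a finite cover. A Samelson structure on a non-abelian $K$ is not K\"ahler, because $b_2(K)=0$ for semisimple $K$. In general $K$ has a finite cover $T'\times K_{ss}$, whose $H^2$ comes only from the torus factor, while the fibration over $K/T$ in (2) would force a non-zero class pulled back from the flag manifold, which vanishes on $K$. The cleanest route is the standard fact that a compact K\"ahler manifold with $b_1$ odd, or with the Hopf-type structure, cannot exist. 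Concretely, the fibre $K\to K/T$ is a principal torus bundle whose Chern classes span $H^2(K/T)$, so the pullback of the K\"ahler class of $K/T$ to $K$ is zero. This contradicts the positivity of the induced K\"ahler volume on the torus fibres combined with the transgression argument of Blanchard. I would cite Blanchard's theorem for holomorphic principal torus bundles with non-trivial characteristic classes.
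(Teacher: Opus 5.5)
Your treatment of (1)--(3) is essentially the paper's. The fibre over the class of $z\in H^*$ is $G\backslash S/\Gamma_{[z]}$, and local triviality comes from a slice for the proper $S$-action. Since $T^{\C}$ normalizes $G$, the torus $T^{\C}/A$ acts, and the quotient is taken by the Borel subgroup $B=GT^{\C}$. Part (3) is Lemma~\ref{lem:Haefliger}.

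Two slips should be corrected. First, the stabilizer of $[z]$ in $S$ is finite, not ``$\C^*$ times a finite group''. For admissible $K$ one has $iI\notin\mathfrak k$, since otherwise $\mu_{iI}=-\|z\|^2$ and $H^*=\varnothing$, so $S$ contains no scalar $\C^*$. Second, Proposition~\ref{prop:once}(3) does not apply verbatim to $K\times\C^*$, because $H^*$ is not the zero set of the moment map of $K\times S^1$. Rerun the convexity argument instead: if $sz=\lambda z$ with $s=ue^P$, then $e^Pz=\lambda u^{-1}z\in H^*$, so $Pz=0$. Hence $P=0$, and $s$ lies in the finite stabilizer of $[z]$ in $K$.

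The properness you assert is what the paper obtains from the diffeomorphism $(S\times\C^*)\times_{K\times S^1}(H^*\cap S^{2n-1})\to\cS$. Luna's \'etale slice theorem on the stable locus is an acceptable alternative.

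For (4) you take a different route, and only the last of the arguments you list works in general. The identity $b_2(K)=0$ holds only for semisimple $K$. Also, $b_1(K)=\dim Z(K)$ need not be odd: for $U(2)\times U(2)$, which occurs in Example~\ref{ex:CE}, $b_1=2$ and $b_2=1$. Drop those and keep the fibration argument, which is sound and needs no citation of Blanchard.

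The map $\pi\colon G\backslash S\to B\backslash S\cong K/T$ is a holomorphic principal $T^{\C}/A$-bundle whose real Chern classes span $H^2(K/T;\R)$. Hence $\pi^*[\omega_B]=0$ for a K\"ahler form $\omega_B$ on the flag manifold. If $\omega$ were K\"ahler on $K$, then $(\pi^*\omega_B)^{d}\wedge\omega^{r/2}$, with $d=\dim_\C K/T\geq1$, would be a pointwise positive top form that is exact, which Stokes' theorem forbids.

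The paper argues instead that $G\backslash S$ is compact homogeneous. If it were K\"ahler, Borel--Remmert would make it a torus times a flag manifold; then $\pi_2(K)=0$ removes the flag factor, and $\pi_3(K)\neq0$ rules out a torus. The paper's proof is a few lines but rests on a classification theorem. Yours is elementary, uses only the structure established in (2), and exhibits the obstruction as the non-trivial characteristic class of $K\to K/T$.
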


\begin{proof}
(1) Under the identification of $N_G$ with the space of leaves, the
fibre through the class of $z$ is the space of leaves contained in the
orbit of $S\times\C^*$ through $z$, which is $G\backslash S/\Gamma_{[z]}$, where
$$
\Gamma_{[z]}=\{s\in S:sz\in\C^*z\}
$$
is the finite projective stabilizer. Its finiteness follows from
that of the stabilizer in $S\times\C^*$: projection to $S$ identifies
the latter with $\Gamma_{[z]}$, since $z\neq0$. This group need not
coincide with the affine stabilizer $S_z$. By Lemma~\ref{lem:Samelson},
$G\backslash S\cong K$. Local triviality follows from a local slice for
the action of $S\times\C^*$ on $\cS$, which is proper: by
Proposition~\ref{prop:once} the natural map
$(S\times\C^*)\times_{K\times S^1}(H^*\cap S^{2n-1})\to\cS$ is a
bijective local diffeomorphism, hence a diffeomorphism, and a Lie
group acts properly on a bundle associated to it with compact
structure group.
(2) $T^{\C}$ normalizes $G$ and $A\subset G$, so $T^{\C}/A$ acts on the
space of leaves, with finite stabilizers because those of
$S\times\C^*$ are finite. Dividing by it amounts to dividing
$\PP(\cS)$ by the Borel subgroup generated by $G$ and $T^{\C}$, and
$B\backslash S$ is biholomorphic, by inversion, to $S/B\cong K/T$.
Over an orbifold chart this is the model fibre; at a coarse-space
point the residual finite stabilizer may give a finite quotient.
(3) Such an element preserves the foliation and $H^*$; apply
Lemma~\ref{lem:Haefliger}.
(4) By (1), $N_G$ contains compact complex submanifolds which are
finite quotients of $K$ with a left-invariant complex structure. If
$N_G$ were K\"ahler, so would these fibres and their finite covers
$K$. By the theorem of Borel and
Remmert \cite{BR} a compact homogeneous K\"ahler manifold is the
product of a complex torus and a flag manifold. A flag manifold which
is not a point has non-zero $\pi_2$, while $\pi_2$ of a Lie group
vanishes; so $K$ would be diffeomorphic to a torus. This is impossible
for non-abelian $K$, since a compact connected Lie group with
non-trivial semisimple part has $\pi_3\neq0$.
\end{proof}

\subsection{Examples}

\begin{example}[Tori]
Let $K=(S^1)^{2k}$ act diagonally with integral weights. By
Proposition~\ref{prop:tori}, $N_G$ is the LVM manifold $N_\Lambda$ of
the configuration $\Lambda_j=w_j|_{\mathfrak a}$. They satisfy condition (K), since the augmented matrix of integral
weights has rational kernel. Conversely, a configuration satisfying
(K) has such a realization after the projective normalization in
Section~\ref{subsec:condition-K}. Thus Theorem~\ref{thm:structure}(1)
recovers their orbifold fibration in compact complex tori over a
projective toric variety, as in Meersseman--Verjovsky \cite{MV}.
\end{example}

\begin{example}[Hopf surfaces and Stiefel manifolds]\label{ex:U2}
Let $K=U(2)$ act on pairs $(M,c)$, with $M$ a complex $2\times p$
matrix, $p\geq2$, and $c\in\C$, by $u\cdot(M,c)=(uM,(\det u)^{-m}c)$
with $m\geq1$. Then $n=2p+1$, $k=2$, and one computes
$\mu_X(M,c)=\operatorname{tr}\bigl(iX(MM^*-m|c|^2I)\bigr)$, so
$$
H^*=\{(M,c):\ MM^*=m|c|^2I,\ c\neq0\},
$$
on which $U(2)$ acts freely. Normalizing $c=1$ shows that $N_G$ is
diffeomorphic to the Stiefel manifold $V_2(\C^p)$ of unitary
$2$-frames, and Theorem~\ref{thm:structure} exhibits it as a bundle of
Hopf surfaces over the Grassmannian of planes in $\C^p$; for $p=2$,
$N_G$ is a Hopf surface. Here $G$ is the group of matrices
$\left(\begin{smallmatrix}e^{\alpha s}&b\\0&e^{\beta s}\end{smallmatrix}
\right)$, $s,b\in\C$, with $\alpha,\beta$ linearly independent over
$\R$, and the existence and uniqueness of the tangency point on each
orbit is the Gram--Schmidt process. Replacing $c$ by several
coordinates $c_1,\ldots,c_q$, on which $u$ acts by $(\det u)^{-m_j}$,
gives Hopf surface bundles over weighted projective bundles over
Grassmannians.
\end{example}

The Samelson structures on $SU(2)\times SU(2)\cong S^3\times S^3$ are
Calabi--Eckmann structures \cite{CE}. They do not occur directly as
fibres in Theorem~\ref{thm:structure}, because this group has no
admissible representation.

\begin{proposition}\label{prop:no-SU2SU2}
No unitary representation of $K=SU(2)\times SU(2)$ is admissible.
\end{proposition}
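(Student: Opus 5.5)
The plan is to show directly that for every unitary representation $V$ of $K=SU(2)\times SU(2)$, either $H^*$ is empty or $H^*$ contains a point with infinite stabilizer. We work with the maximal torus $T=T_1\times T_2$, identify weights with pairs $(p,q)\in\Z^2$, and recall that the Weyl group $\{\pm1\}\times\{\pm1\}$ acts by sign changes. The case analysis is on the weights of $V$.

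First we dispose of degenerate cases. If $V$ contains a trivial summand, a non-zero vector $z$ in it satisfies $\mu_X(z)=0$ for all $X$, so $z\in H^*$ and $K_z=K$. Next suppose every weight of $V$ lies on a coordinate axis, so that $V=V_1\oplus V_2$ with the second factor acting trivially on $V_1$ and the first trivially on $V_2$. Then $\mu$ splits, and $H^*\cup\{0\}=(H_1^*\cup\{0\})\times(H_2^*\cup\{0\})$. If $H_1^*\neq\varnothing$, a point $(x,0)$ with $x\in H_1^*$ lies in $H^*$ and is fixed by $1\times SU(2)$. The case $H_2^*\neq\varnothing$ is symmetric. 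If both are empty, then $H^*=\varnothing$. Either way $K$ is not admissible.

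In the remaining case, $V$ has a weight $\omega=(p,q)$ with $p\neq0$ and $q\neq0$. Then $\omega$ is not proportional to either root $(2,0)$ or $(0,2)$. The element $w_0=(-1,-1)$ of the Weyl group shows that $-\omega$ is also a weight. Let $T'=\ker\omega\subset T$, a one-dimensional subtorus. Since $\omega$ is not on a root line, no root vanishes on $\mathfrak t'$, so the centralizer of $T'$ in $K$ is exactly $T$. Choose unit weight vectors $v_{\pm\omega}$ and put $z=v_\omega+v_{-\omega}$. The claim is that $z\in H^*$. Then $T'\subset K_z$, so the stabilizer is infinite.

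To check $z\in H^*$, split $\mathfrak k=\mathfrak t\oplus(\text{root part})$. For $X\in\mathfrak t$ we get $\mu_X(z)=-\omega(X)\|v_\omega\|^2+\omega(X)\|v_{-\omega}\|^2=0$, since the norms are equal. For $X$ in the real span of the root vectors $E_{\pm\alpha}$, the vector $Xz$ has $T$-weights in $\{\pm\omega\pm\alpha\}$. So $\langle Xz,z\rangle$ can be non-zero only if $\pm\omega\pm\alpha=\pm\omega$ for some choice of signs, that is, $\alpha=\pm2\omega$. This is impossible, because $2\omega=(2p,2q)$ has both coordinates non-zero, while each root has a zero coordinate. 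This is exactly where the choice of an off-axis weight is used. Equivalently, the root parts of $\mu$ pair distinct $T'$-weight spaces, and $z$ lies in the $T'$-fixed space $V^{T'}$. Hence $\mu(z)=0$.

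The main obstacle I expect is this vanishing of the root components of the moment map. It is the reason for separating the axis case, where $\omega$ could be half of a root and the argument above fails. The axis case is then handled by the product structure of the moment map instead.
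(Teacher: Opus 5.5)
Your proof is correct and is essentially the paper's argument: the key step is to take $z=v_\omega+v_{-\omega}$ for a weight $\omega$ with $2\omega$ not a root, using the Weyl element $-1$, and note that $z$ lies in $H^*$ and is fixed by a circle in $T$; in the remaining case, points of $H^*$ supported on one factor's summand are fixed by the other $SU(2)$, exactly as in the paper. The only difference is the case split: you separate off-axis weights from axis weights and treat the whole axis case abstractly via $H^*\cup\{0\}=(H_1^*\cup\{0\})\times(H_2^*\cup\{0\})$, whereas the paper applies the $v_\omega+v_{-\omega}$ trick to every weight other than $\pm(1,0),\pm(0,1)$ (including $\omega=0$ and axis weights such as $(3,0)$) and then handles only sums of standard representations explicitly through $M_1M_1^*=sI_2$, $M_2M_2^*=tI_2$.
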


\begin{proof}
Let $T$ be the diagonal maximal torus, with weights in $\Z^2$, and
roots $\pm(2,0)$, $\pm(0,2)$. For $x\in\C^n$, $\mu(x)=0$ if and only
if $\langle Hx,x\rangle=0$ for $H\in\mathfrak t^{\C}$ and
$\langle E_\alpha x,x\rangle=0$ for every root vector $E_\alpha$. If $x$
is a sum of weight vectors no two of which have weights differing by a
root, the second condition holds, since $E_\alpha$ raises weights by
$\alpha$.

The set of weights is invariant under the Weyl group, which contains
$-1$. Suppose first that some weight $w$ is not one of $\pm(1,0)$,
$\pm(0,1)$. If $w=0$, a weight vector $x$ of weight $0$ satisfies
$\mu(x)=0$ and is fixed by $T$. If $w\neq0$, let $e_w,e_{-w}$ be unit
weight vectors and $x=e_w+e_{-w}$. The torus component of $\mu(x)$ is
proportional to $w-w=0$, and $2w$ is not a root, so $\mu(x)=0$; and $x$
is fixed by the circle of $T$ orthogonal to $w$. In both cases $H^*$
contains a point with infinite stabilizer.

Otherwise $\C^n\cong(\C^2\otimes\C)^{a}\oplus(\C\otimes\C^2)^{b}$, that
is, $K$ acts on $\C^{2\times a}\times\C^{2\times b}$ by
$(M_1,M_2)\mapsto(u_1M_1,u_2M_2)$, and $H^*$ is defined by
$M_1M_1^*=s I_2$, $M_2M_2^*=t I_2$ with $s,t\ge0$, not both zero. If
$a\ge2$, the points with $M_1M_1^*=I_2$, $M_2=0$ lie in $H^*$ and are
fixed by the second factor. If $a\le1$, then $M_1=0$ on $H^*$, and
every point of $H^*$ is fixed by the first factor; $H^*$ is empty if
also $b\le1$. So $K$ is never admissible.
\end{proof}

Calabi--Eckmann manifolds do appear as fibres once central directions
are added, which couple the two factors.

\begin{example}[Calabi--Eckmann fibres]\label{ex:CE}
Let $K=U(2)\times U(2)$ act on $\C^{2\times p}\times\C^{2\times q}\times
\C$, $p,q\ge2$, by
$$
(u_1,u_2)\cdot(M_1,M_2,c)=\bigl(u_1M_1,\,u_2M_2,\,
(\det u_1)^{-m_1}(\det u_2)^{-m_2}c\bigr),\qquad m_1,m_2\ge1 .
$$
Then $n=2p+2q+1$, $k=4$, and
$$
H^*=\{M_1M_1^*=m_1|c|^2I_2,\ M_2M_2^*=m_2|c|^2I_2,\ c\neq0\},
$$
on which $K$ acts freely, since both $M_j$ have rank $2$. Normalizing
$c=1$, $N_G$ is diffeomorphic to $V_2(\C^p)\times V_2(\C^q)$, and
Theorem~\ref{thm:structure} exhibits it as a bundle over
$\Gr(2,p)\times\Gr(2,q)$ with fibre $G\backslash S$, up to a finite
group, where $S=GL(2,\C)\times GL(2,\C)$. Let $Z_j$ and
$H_j=\diag(1,-1)$ denote the identity and the Cartan element of the
$j$-th factor, and $E_j$ its positive root vector. For
$\tau,\sigma\in\C\setminus\R$,
$$
\mathfrak g=\C(Z_1+\tau Z_2)\oplus\C(H_1+\sigma H_2)\oplus\C E_1
\oplus\C E_2
$$
is a Samelson subalgebra, since a vector
$x(Z_1+\tau Z_2)+y(H_1+\sigma H_2)$ with purely imaginary coordinates
has $x=y=0$. Then $G=G_Z\cdot G_{ss}$, with $G_Z=\exp\C(Z_1+\tau Z_2)$
central and $G_{ss}\subset SL(2,\C)^2$, and since
$S=\bigl((\C^*)^2\times SL(2,\C)^2\bigr)/(\pm1,\pm1)$,
$$
G\backslash S\cong\bigl(E_\tau\times\mathcal C_\sigma\bigr)/(\Z/2)^2,
$$
where
$$
E_\tau=(\C^*)^2/\{(e^t,e^{\tau t}):t\in\C\},\qquad
\mathcal C_\sigma=G_{ss}\backslash SL(2,\C)^2.
$$
The first factor is an elliptic curve. Identifying
$N^+\backslash SL(2,\C)$ with $\C^2\setminus\{0\}$ through the second
row expresses the second factor as
$$
\mathcal C_\sigma=(\C^2\setminus\{0\})^2/\C,\qquad
t\cdot(w_1,w_2)=(e^{-t}w_1,e^{-\sigma t}w_2).
$$
This is the Calabi--Eckmann manifold $S^3\times S^3$ \cite{CE}.
Thus the fibres are, up to finite
quotients, products of an elliptic curve and a Calabi--Eckmann
manifold. If instead $\mathfrak a=\C\diag(\alpha_1,\beta_1)\oplus
\C\diag(\alpha_2,\beta_2)$ splits along the factors, the fibres are,
up to finite quotients, products of two Hopf surfaces.
\end{example}

\section{The irrational case}\label{sec:irrational}

In the LVM construction the rotation parts generate a possibly
non-closed subgroup of a compact torus. Closedness of its image
modulo scalar rotations is the classical condition (K), recalled in
Section~\ref{subsec:condition-K}; affine closedness is a stronger
requirement for a fixed configuration. The non-abelian analogue is
as follows.

Let $K'\subset U(n)$ be a compact connected Lie group, and let
$\mathfrak z\subset\mathfrak u(n)$ be a real space of diagonal
skew-Hermitian matrices commuting with $K'$, such that
$\mathfrak k=\mathfrak k'\oplus\mathfrak z$ has even dimension $2k$.
We do not assume that $\exp\mathfrak z$ is closed in $U(n)$. To avoid
topological difficulties we work with abstract groups: let
$$
\widetilde K=K'\times\mathfrak z,\qquad
\widetilde S=K'^{\C}\times\mathfrak z^{\C},
$$
where $\mathfrak z$ and $\mathfrak z^{\C}=\mathfrak z\oplus i\mathfrak z$
are regarded as additive groups, and let $\rho:\widetilde S\to GL(n,\C)$,
$\rho(s,w)=s\,e^{w}$, which is a holomorphic homomorphism because
$\mathfrak z$ commutes with $K'$. Then $\widetilde K$ is a closed
subgroup of $\widetilde S$, and the polar decomposition
$\widetilde S=\widetilde K\exp(i\mathfrak k)$ holds factorwise. Let
$\mathfrak t'$ be the Lie algebra of a maximal torus of $K'$, and let
$\mathfrak a\subset(\mathfrak t'\oplus\mathfrak z)^{\C}$ be a complex
subspace of half the real dimension of $\mathfrak t'\oplus\mathfrak z$
with $\mathfrak a\cap(\mathfrak t'\oplus\mathfrak z)=0$. The
\emph{Samelson group} is the connected subgroup $\widetilde G\subset
\widetilde S$ with Lie algebra $\mathfrak g=\mathfrak a\oplus
\mathfrak n^+$; it acts on $\C^n$ through $\rho$. We define $\mu_X$,
$X\in\mathfrak k$, and $H^*$ as before. ``Irrational'' means that the
image of $\exp\mathfrak z$ in the torus of $\PP^{n-1}_{\mathbb C}$, that is,
modulo the scalars, is not closed.

\begin{theorem}\label{thm:irrational}
Assume that $H^*\neq\varnothing$ and that the infinitesimal stabilizer
$$\mathfrak k_z=\{X\in\mathfrak k:\ Xz=0\}$$ vanishes for every $z\in H^*$.
Then the map $\widetilde G\times H^*\to\C^n$, $(g,z)\mapsto\rho(g)z$, is
a diffeomorphism onto an open set $\cS$; the group $\widetilde G$ acts
freely on $\cS$, every tangency of its orbits with the spheres is a
leafwise non-degenerate minimum, and every orbit meets $H^*$ exactly
once; and $N=H^*/\C^*=\PP(\cS)/\widetilde G$ is a compact complex
manifold of dimension $n-k-1$, the space of leaves of the
projectivized foliation, with its transverse complex structure.
\end{theorem}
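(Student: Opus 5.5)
The plan is to rerun the proofs of Lemma~\ref{lem:Samelson}, Proposition~\ref{prop:tangency}, Proposition~\ref{prop:once} and Theorem~\ref{thm:main} inside the abstract groups $\widetilde K\subset\widetilde S$, and to use the representation $\rho$ only at the last moment. The point of working upstairs is that $\widetilde K$ is a closed subgroup of $\widetilde S$ even when $\exp\mathfrak z$ is not closed in $U(n)$. So the group-theoretic facts hold exactly as in the compact case, and the only input about $\rho$ is that $\rho$ sends $\mathfrak k$ to skew-Hermitian and $i\mathfrak k$ to Hermitian matrices. I would also arrange the argument so that it never needs finiteness of stabilizers in $\widetilde K$, only the vanishing of $\mathfrak k_z$. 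This matters because $\rho(\widetilde K)$ may have non-discrete, non-closed stabilizers in $U(n)$.

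\emph{Step 1: the Samelson decomposition for $\widetilde S$.} I will show $\widetilde S=\widetilde G\widetilde K$ uniquely, that $\widetilde G$ is closed and simply connected, and that $\widetilde G\cap\widetilde K=\{e\}$. First, $\mathfrak g\cap\Ad(s)\mathfrak k=0$ for all $s\in\widetilde S$. Here $\mathfrak z^{\C}$ is central, so $\Ad(s)$ only moves the $\mathfrak k'^{\C}$-component. One can then argue in a faithful representation of $K'$ direct-summed with the identity on $\mathfrak z^{\C}$, where elements of $\Ad(s)\mathfrak k$ are conjugate to skew-Hermitian matrices. The eigenvalue argument of Lemma~\ref{lem:Samelson} then forces the $\mathfrak a$-part into $\mathfrak a\cap(\mathfrak t'\oplus\mathfrak z)=0$ and the nilpotent part to vanish. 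The orbit argument on $\widetilde S/\widetilde K\cong i\mathfrak k\cong\R^{2k}$ then goes through verbatim.

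\emph{Step 2: local analysis on $H^*$.} Part~(1) of Proposition~\ref{prop:tangency} uses only $\mathfrak g+\mathfrak k=\mathfrak s$ and carries over unchanged, and so does the Hessian identity~(2). For (3), the map $\xi\mapsto H_{\rho_*\xi}$ comes from the splitting $\mathfrak g\oplus\mathfrak k=\mathfrak k\oplus i\mathfrak k$. The hypothesis $\mathfrak k_z=0$ is exactly what gives $H_\xi z\neq0$ for $\xi\neq0$. It also gives independence of the differentials $2iXz$ of the $2k$ quadrics. Hence $H^*$ is smooth of codimension $2k$ and transverse to the orbits, and $\Phi(g,z)=\rho(g)z$ is a local diffeomorphism with open image $\cS$.

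\emph{Step 3: global injectivity via convexity.} Suppose $\rho(g)z=\rho(g')z'$ with $z,z'\in H^*$, and put $h=g'^{-1}g$. Write $h=u\,e^P$ with $u\in\widetilde K$ and $P\in i\mathfrak k$, so that $\rho_*P$ is Hermitian. Lemma~\ref{lem:convexity} applied to $f(t)=\|\rho(e^{tP})z\|^2$ gives two critical points, at $0$ and at $1$, because $\rho(e^{P})z=\rho(u)^{-1}z'\in H^*$. Hence $\rho_*(P)z=0$, and therefore $P=0$ since $\mathfrak k_z=0$. So $h=u\in\widetilde K\cap\widetilde G=\{e\}$. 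This yields injectivity and freeness together. It also gives uniqueness of the tangency on each orbit, and non-degeneracy as in Step~2.

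\emph{Step 4: projectivization and compactness.} The homotheties commute with $\rho(\widetilde G)$ and preserve $H^*$. If $\rho(g)z=\lambda z'$ with $z,z'\in H^*$, then $\lambda z'\in H^*$, so Step~3 gives $g=e$. Hence each projectivized leaf meets $H^*/\C^*$ once, and Lemma~\ref{lem:Haefliger} applies. Compactness follows because $H^*\cap S^{2n-1}$ is closed in the sphere: $0\notin$ the sphere, so no limit issue arises. The dimension count is as in Theorem~\ref{thm:main}.

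I expect Step~1 to be the main obstacle, namely making the eigenvalue argument honest for the abstract group $\widetilde S$, where $\rho$ is not faithful. It is tempting to argue inside $GL(n,\C)$, but there $\rho_*$ may fail to be injective on $\mathfrak g$ or on $\mathfrak z$. I would therefore use the auxiliary faithful representation described in Step~1 rather than $\rho$.
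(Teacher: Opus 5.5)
Your proposal is correct and follows the paper's proof. The paper likewise reruns Lemma~\ref{lem:Samelson}, Propositions~\ref{prop:tangency} and~\ref{prop:once} and Theorem~\ref{thm:main} for $\widetilde S,\widetilde K,\widetilde G$, with $\mathfrak k_z=0$ in place of finite stabilizers; your single convexity step in Step~3 is just Proposition~\ref{prop:once}(1) and~(3) combined. The auxiliary faithful representation in Step~1 is harmless but unnecessary: since $H^*\neq\varnothing$ and $\mathfrak k_z=0$, the kernel of $\rho_*$ on $\mathfrak k$ is trivial, and splitting into skew-Hermitian and Hermitian parts then makes $\rho_*$ injective on all of $\mathfrak s$, so the eigenvalue argument can be run directly in $GL(n,\C)$.
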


\begin{proof}
We check that each step of Sections~\ref{sec:tangency}
and~\ref{sec:main} goes through with $\widetilde S$, $\widetilde K$,
$\widetilde G$ in place of $S$, $K$, $G$. The Lie algebra $\mathfrak s=
\mathfrak k\oplus i\mathfrak k$ is still closed under adjoints, so
Proposition~\ref{prop:tangency} holds verbatim, with the hypothesis on
$K_z$ replaced by $\mathfrak k_z=0$. In the proof of
Lemma~\ref{lem:Samelson}, the eigenvalue argument uses only that
$\mathfrak a$ contains no infinitesimal rotation, and
$\widetilde S/\widetilde K\cong i\mathfrak k$ is still a vector space
on which $\widetilde G$ acts with open orbits; hence
$\widetilde S=\widetilde G\widetilde K$ with uniqueness, and
$\widetilde G\cap\widetilde K=\{e\}$. Lemma~\ref{lem:convexity} is
unchanged. In Proposition~\ref{prop:once}, statements (1) and (2) use
only the polar decomposition and the invariance of $H^*$ under
$\widetilde K$, and in (3) a stabilizing element $ue^{iX}$ gives
$Xz_0=0$, hence $X=0$ by $\mathfrak k_{z_0}=0$. The proof of
Theorem~\ref{thm:main} then applies word for word: injectivity of the
action map uses only these facts, the action of $\widetilde G$ on
$\cS$ is free, which also shows that $\rho$ is injective on
$\widetilde G$, and $H^*\cap S^{2n-1}$ is compact. Neither the
compactness of $\widetilde K$ nor the closedness of
$\rho(\widetilde G)$ is used.
\end{proof}

\begin{remark}[Principal bundles]\label{rem:principal-bundle}
The quotient maps $\cS\to\cS/\widetilde G$ and
$\PP(\cS)\to N$ are holomorphic principal $\widetilde G$-bundles.
Indeed, the global smooth product decomposition in
Theorem~\ref{thm:irrational} identifies the action with left
translation on the group factor, so it is proper as well as free.
Scalar multiplication acts only on the $H^*$ factor, by uniqueness
of the tangency representative; hence the projectivized action also
has a global smooth product decomposition. Local holomorphic
transverse charts and the holomorphic inverse function theorem give
local holomorphic sections and principal-bundle trivializations.
The same conclusion holds in Theorem~\ref{thm:main}.
\end{remark}

Since $\widetilde K$ is no longer compact, there is in general no
quotient of $N$ by it, and Theorem~\ref{thm:structure} has no direct
analogue. The question is what $N$ looks like.

\section{Binary forms with a Hopf part}\label{sec:binary-forms}

Let $V_d$ be the space of binary forms of odd degree $d\geq3$, with
the action of $SU(2)$ and its invariant Hermitian metric, let
$q\geq1$, and let $V=V_d\oplus\C^q$, with coordinates
$(f,c_1,\ldots,c_q)$. Let $K'=SU(2)$ act on $f$, and $\mathfrak z=\R\,
iY$ with
$$
Y=\diag(a_0I_{V_d},a_1,\ldots,a_q),\qquad a_0<0<a_1,\ldots,a_q .
$$
Then $\dim\mathfrak k=4$, $k=2$, and $\mathfrak t'\oplus\mathfrak z$ is
spanned by $iH$ and $iY$, where $H$ is the Cartan element of
$\mathfrak{sl}_2$ and $E$ the positive root vector. A Samelson
subalgebra is $\mathfrak g=\C(H+\tau Y)\oplus\C E$ with $\tau\notin\R$,
and we take $\operatorname{Im}\tau>0$. The image of $\exp\mathfrak z$
in the torus of $\PP(V)$ is the one-parameter group
$t\mapsto(e^{it(a_j-a_0)})_{j=1}^q$, which is not closed exactly when
some ratio $(a_i-a_0)/(a_j-a_0)$ is irrational; this requires
$q\geq2$.

\medskip\noindent\textbf{Tangency locus and admissibility.} Let $\mu'$
be the moment map of $SU(2)$ on $V_d$. Then
$$
H^*=\Bigl\{(f,c)\neq0:\ \mu'(f)=0,\ \
a_0\|f\|^2+\sum_ja_j|c_j|^2=0\Bigr\}.
$$
By the signs of the $a_j$, both $f$ and $c$ are non-zero on $H^*$. If
$(X,t)\in\mathfrak{su}(2)\oplus\R$ kills $(f,c)\in H^*$, then
$ta_jc_j=0$ for all $j$, so $t=0$, and $Xf=0$; a non-zero element of
$\mathfrak{su}(2)$ is conjugate to a multiple of $iH$, whose weights
on $V_d$ are the odd integers $d,d-2,\ldots,-d$, so $X=0$. Hence
Theorem~\ref{thm:irrational} applies, and
$$
N\cong(\Sigma_d\times S^{2q-1})/S^1,\qquad
\Sigma_d=\mu'^{-1}(0)\cap S^{2d+1},
$$
is a compact complex manifold of dimension $d+q-2$.

\medskip\noindent\textbf{The open set $\cS$.} We have
$\cS=V_d^s\times(\C^q\setminus\{0\})$, where $V_d^s$ is the set of
$SL_2$-stable binary forms; for odd $d$, stable and semistable forms
coincide \cite{MFK}. Indeed, by Kempf--Ness $SL_2$ moves a stable $f$ to
a form with $\mu'(f)=0$, and then the dilation $\exp(tY)$, which
commutes with $SL_2$, solves
$a_0e^{2a_0t}\|f\|^2+\sum_ja_je^{2a_jt}|c_j|^2=0$ in a unique way when
$c\neq0$, since the left side increases strictly from $-\infty$ to
$+\infty$. The converse inclusion is clear.

\medskip\noindent\textbf{The structure of $N$.} Let $B\subset SL_2$ be
the Borel subgroup of upper triangular matrices and
$$
Y_d=\PP(V_d^s)/B ,
$$
a compact complex orbifold of dimension $d-2$. It fibres over the
moduli space $\PP(V_d)/\!\!/SL_2$ of binary forms of degree $d$, of
dimension $d-3$, with fibre $SL_2/B=\PP^1_{\mathbb C}$ modulo finite groups. More explicitly,
it is the diagonal $SL_2$-quotient of
$\PP(V_d^s)\times\PP^1_{\mathbb C}$. With the linearization
$\mathcal O(a)\boxtimes\mathcal O(1)$ for sufficiently large $a$,
the stable and semistable loci are both
$\PP(V_d^s)\times\PP^1_{\mathbb C}$: the nonzero integral Hilbert--Mumford
weights of an odd-degree binary form dominate the bounded
contribution from $\PP^1_{\mathbb C}$. The resulting GIT quotient is projective
and has finite quotient singularities. This gives the projective
orbifold structure on $Y_d$.

\begin{proposition}\label{prop:example}
The map $[f:c]\mapsto B\cdot[f]$ induces a holomorphic map $N\to Y_d$,
which is a holomorphic fibre bundle in the orbifold sense. Its fibre
over a point with trivial stabilizer in $B/\{\pm I\}$ is the
diagonal Hopf manifold
$$
\mathcal H=(\C^q\setminus\{0\})/\langle\gamma\rangle,\qquad
\gamma=\diag\bigl(-e^{\pi i\tau(a_j-a_0)}\bigr)_{j=1}^q ,
$$
and over the other points a quotient of $\mathcal H$ by a finite
group. In local orbifold trivializations, the transition functions act on
$\mathcal H$ through the group generated by scalar homotheties and
$w\mapsto\diag(e^{w\tau(a_j-a_0)})_j$. Both actions commute with
$\gamma$.
\end{proposition}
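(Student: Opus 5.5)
We identify $N$ with the space of leaves $\PP(\cS)/\widetilde G$ (Theorem~\ref{thm:irrational}), where $\cS=V_d^s\times(\C^q\setminus\{0\})$. The map $[f:c]\mapsto B\cdot[f]$ is well defined on leaves for two reasons. The group $\widetilde G$ is generated by $\exp\C(H+\tau Y)$ and $\exp\C E$, and it acts on the $f$-component through $e^{wH}e^{bE}\in B$ together with a scalar $e^{w\tau a_0}$. Both of these become trivial in $\PP(V_d^s)/B$. The map is holomorphic because it is induced from the holomorphic map $\PP(\cS)\to\PP(V_d^s)\to Y_d$, which is constant on leaves; by Lemma~\ref{lem:Haefliger} and Remark~\ref{rem:principal-bundle}, holomorphic leafwise-constant maps descend holomorphically to $N$.

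To identify the fibre, fix a stable $f$ and work in $\cS$ rather than in $\PP(\cS)$, dividing by the group $\widehat G=\widetilde G\times\C^*$. The preimage of $B[f]$ is $(B\cdot\C^*f)\times(\C^q\setminus\{0\})$. Using $\widetilde G$, we first move the $f$-component to $f$ itself; by Proposition~\ref{prop:once}(3) the projective stabilizer of $[f]$ in $SL_2$ is finite. An element $(e^{wH}e^{bE},\lambda)$ of $\widehat G$ fixes $f$ projectively and, after composition with the scalar $\lambda$, exactly. For generic $f$ (trivial stabilizer in $B/\{\pm I\}$) this forces $e^{wH}e^{bE}=\pm I$, hence $b=0$ and $w\in\pi i\Z$, with $\lambda e^{w\tau a_0}=\pm1$ chosen to cancel the scalar on $f$. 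For $w=\pi i$ the element acts on $c_j$ by $\lambda e^{\pi i\tau a_j}$ with $\lambda=-e^{-\pi i\tau a_0}$ (since $d$ is odd, $e^{\pi i H}=-I$ on $V_d$). This gives $\gamma=\diag(-e^{\pi i\tau(a_j-a_0)})$. Since $\operatorname{Im}\tau>0$ and $a_j-a_0>0$, every entry of $\gamma$ has modulus less than $1$, so the quotient is a diagonal Hopf manifold $\mathcal H$. When the stabilizer of $[f]$ in $B/\{\pm I\}$ is a non-trivial finite group $\Gamma$, the residual subgroup is $\langle\gamma\rangle$ extended by $\Gamma$, and the fibre is $\mathcal H/\Gamma$.

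For local triviality, we take a local holomorphic slice $\Sigma\subset\PP(V_d^s)$ for the $B$-action, which is proper with finite stabilizers, through $[f]$. Near a generic point this gives $N|_U\cong\Sigma\times\mathcal H$, given by $(\sigma,c)\mapsto[\tilde\sigma:c]$ with $\tilde\sigma$ a holomorphic lift; in general we obtain an orbifold chart $(\Sigma\times\mathcal H)/\Gamma$. Changing the slice or the lift changes the identification by an element $(w,\lambda)$ depending holomorphically on the base point. This element acts on $c$ by $\lambda\cdot\diag(e^{w\tau(a_j-a_0)})$ after absorbing the $f$-scalar. The transition functions therefore lie in the group generated by homotheties and $\diag(e^{w\tau(a_j-a_0)})$. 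All of these are diagonal, so they commute with $\gamma$ and descend to $\mathcal H$.

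The main obstacle is the bookkeeping of stabilizers. We must check that the only elements of $\widehat G$ preserving the slice are of the form found above, and handle carefully the interplay of $\pm I$, the scalar $\C^*$, and the finite stabilizers of $B$ on $\PP(V_d^s)$, which is what yields the orbifold structure.
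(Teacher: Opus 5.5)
Your proposal is correct and follows essentially the same route as the paper. In both, the map descends because $\widetilde G$ acts on $[f]$ through $B$, and the fibre is the residual stabilizer of a fixed representative $f_0$, with $\gamma$ coming from $s=\pi i$ and the odd weights of $H$. Local triviality and the transition functions $c_j\mapsto\nu e^{s\tau(a_j-a_0)}c_j$ come from a local slice and changes of representative $f_\beta=\nu e^{sH}e^{bE}f_\alpha$.

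The one loose point is your citation of Proposition~\ref{prop:once}(3), which only gives finiteness of \emph{affine} stabilizers. Finiteness of the projective stabilizer of $[f]$ in $SL_2$ needs the small extra step used in the proof of Theorem~\ref{thm:structure}(1), or simply stability of odd-degree forms; the claim itself is true, and the paper also uses it implicitly.
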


\begin{proof}
The element $g(s,b)=\exp(s(H+\tau Y))\exp(bE)$ acts by
$$
(f,c)\longmapsto\bigl(e^{s\tau a_0}e^{sH}e^{bE}f,\
(e^{s\tau a_j}c_j)_j\bigr).
$$
On $[f]$ it acts through $e^{sH}e^{bE}$, and these elements fill $B$.
So $N=\PP(\cS)/\widetilde G\to\PP(V_d^s)/B$ is well defined and
holomorphic. Over the class of $f_0$, using $\widetilde G$ and the
homotheties we may assume $f=f_0$; the remaining freedom consists of
the pairs $(\lambda,g(s,b))$ with
$\lambda e^{s\tau a_0}e^{sH}e^{bE}f_0=f_0$. Then $e^{sH}e^{bE}$
stabilizes $[f_0]$. Suppose that the stabilizer of $[f_0]$ in $B$ is
$\{\pm I\}$, which acts trivially on $\PP(V_d)$. Then $b=0$ and
$s\in\pi i\Z$. Since the weights of $H$ on $V_d$ are odd, $e^{sH}$
acts on $V_d$ as the scalar $(-1)^{s/\pi i}$, and
$\lambda=(-1)^{s/\pi i}e^{-s\tau a_0}$. For $s=\pi i$ this element
acts on $c$ by $c_j\mapsto-e^{\pi i\tau(a_j-a_0)}c_j$, that is, by
$\gamma$. Hence the fibre is $(\C^q\setminus\{0\})/\langle\gamma\rangle$.
Since $a_j-a_0>0$ and $\operatorname{Im}\tau>0$, all eigenvalues of
$\gamma$ have modulus $<1$, so this is a Hopf manifold. A larger
(finite) stabilizer gives a finite quotient. Local triviality follows
from the same computation with $f_0$ varying in a local slice of
$\PP(V_d^s)\to Y_d$, and the transition functions also record changes of the representative
of $[f_0]$. Indeed, if two representatives are related by
$f_\beta=\nu e^{sH}e^{bE}f_\alpha$, the induced coordinate change is
$c_{\beta,j}=\nu e^{s\tau(a_j-a_0)}c_{\alpha,j}$.
The scalar factor $\nu$ cannot in general be omitted.
\end{proof}

Thus $N$ is a bundle of Hopf manifolds over a projective orbifold,
with transition functions generated by scalar and weighted diagonal
actions on the Hopf fibre. The irrational parameters enter the diagonal contraction defining the
Hopf fibre; this does not assert that every such contraction is realized
by rational weights. For $q=1$ the projected rotation group is a circle,
although its affine lift need not be closed unless the original weights
are commensurable. The fibre of $N\to Y_d$ is then an elliptic curve,
and the composite map to $\PP(V_d)/\!\!/SL_2$ has Hopf-surface fibres,
up to finite quotients. These examples therefore remain within the
class of fibrations over projective orbifolds considered here.

\section{Absorption, and where new manifolds may come from}
\label{sec:absorption}

The proof of Proposition~\ref{prop:example} uses only one feature:
\emph{on the part $V_d$ of the representation where $K'$ acts
non-trivially, the irrational torus acts by scalars}. After
projectivizing that part the torus disappears, $\widetilde G$ acts on
$\PP(V_d)$ through the Borel subgroup of $K'^{\C}$, and the torus
survives only on the part where $K'$ acts trivially, where it produces
an LVM-type manifold (here a Hopf manifold) as fibre.

\begin{remark}[Absorption]\label{rem:absorption}
The mechanism applies whenever projectivization removes the central
scalar action, the projected solvable group fills the relevant Borel
subgroup, and the projected points are stable with finite stabilizers.
Under these additional hypotheses the local-slice argument of
Proposition~\ref{prop:example} identifies the remaining fibres.
Scalar action on an isotypic component alone does not guarantee these
hypotheses, nor does it by itself prove local triviality of the quotient.
\end{remark}

This absorption argument need not apply when some $K'$-invariant component on which the
torus acts by a scalar can \emph{vanish} on $\cS$ while another one
does not. Then the corresponding projections are defined only on strata, so
the preceding argument does not produce a global bundle presentation.
This does not exclude a different absorption mechanism or another
fibration, as the Grassmannian construction below illustrates. This is
the non-abelian counterpart of the passage from Hopf and Calabi--Eckmann
manifolds to general LVM manifolds, where the combinatorics of a
polytope governs how the strata are glued.

\begin{example}[The smallest candidate]\label{ex:candidate}
Let $SU(2)$ act diagonally on two binary cubics, $V=V_3\oplus V_3\oplus
\C$, with coordinates $(f_1,f_2,c)$, and let $\mathfrak z=\R\,iY$,
$Y=\diag(a_1I,a_2I,a_3)$, with $a_3<0<a_1<a_2$ and
$(a_2-a_1)/(a_3-a_1)\notin\mathbb Q$. Then $k=2$,
$\mathfrak g=\C(H+\tau Y)\oplus\C E$, and
$$
H^*=\bigl\{\mu'(f_1)+\mu'(f_2)=0,\ \
a_1\|f_1\|^2+a_2\|f_2\|^2+a_3|c|^2=0\bigr\}\setminus\{0\}.
$$
On $H^*$ one has $c\neq0$ and $(f_1,f_2)\neq0$. If $(X,t)$ kills a
point of $H^*$ then $ta_3c=0$, so $t=0$, and $Xf_1=Xf_2=0$; since no
non-zero element of $\mathfrak{su}(2)$ kills a non-zero binary cubic,
$X=0$. By Theorem~\ref{thm:irrational}, $N$ is a compact complex
manifold of dimension $6$. The points with $f_2=0$, or with $f_1=0$,
occur on $H^*$ and form closed complex submanifolds of the type of
Section~\ref{sec:binary-forms} with $q=1$; and since $\mathfrak a$ mixes
the Cartan direction of $SU(2)$ with $Y$, which does not act by scalars
on $V_3\oplus V_3$, Remark~\ref{rem:absorption} does not apply. The
evident torus acting holomorphically on $N$ (the phases of
$f_1,f_2,c$ modulo scalars, and the maximal torus of $SU(2)$) has
dimension $3$, while $\dim_{\R}N=12$, so this action is not maximal in
the sense of Ishida \cite{Ishida}.
\end{example}

\begin{question}
Describe $N$ in Example~\ref{ex:candidate}: the open stratum
$\{f_1\neq0,\ f_2\neq0\}$, and the way the two closed strata are
glued to it. Does $N$ admit a holomorphic map with positive-dimensional
compact fibres onto a lower-dimensional space? What are its algebraic
dimension and its automorphism group? Does it admit a maximal torus
action?
\end{question}

If $N$ admits no such map and no maximal torus action, it is neither
an LVMB manifold \cite{Bosio,BM,Ishida}, nor a bundle with positive-dimensional Samelson
fibres, nor a bundle of the projective-base type considered above. This does
not exclude arbitrary presentations as quotients by abelian groups.
The methods of
the next section should apply to it. There we carry them out for a
different example, in which several copies of the same irreducible
representation of $SU(2)$ are separated by an irrational torus, and
compute invariants distinguishing it from the classes specified below.

\section{An explicit seven-dimensional example}\label{sec:new}

\subsection{The example}
Let $\zeta=e^{2\pi i/5}$ and $\lambda_j=(\cos(2\pi j/5),\sin(2\pi j/5))
\in\R^2$, $0\leq j\leq4$, the vertices of a regular pentagon. Let
$V=(\C^2)^5\oplus\C$, with coordinates $(v_0,\ldots,v_4,c)$, let
$K'=SU(2)$ act on each $v_j$ by its standard representation and
trivially on $c$, and let $\mathfrak z=\operatorname{span}_\R\{iY_0,
iY_1,iY_2\}$, where
$$
Y_0=\diag(I_2,\ldots,I_2,-1),\quad
Y_1=\diag\bigl((\operatorname{Re}\zeta^j)I_2;0\bigr),\quad
Y_2=\diag\bigl((\operatorname{Im}\zeta^j)I_2;0\bigr),
$$
the last entry being the action on $c$. Then $\dim\mathfrak k=6$,
$k=3$, $n=11$. With $H=\diag(1,-1)$ and
$E=\left(\begin{smallmatrix}0&1\\0&0\end{smallmatrix}\right)$ acting on
each $v_j$ and trivially on $c$, put
$$
D_0=H+iY_0,\qquad D_1=Y_1+iY_2,\qquad
\mathfrak g=\C D_0\oplus\C D_1\oplus\C E .
$$
Here $\mathfrak a=\C D_0\oplus\C D_1$; the Hermitian part of
$\alpha D_0+\beta D_1$ is $(\operatorname{Re}\alpha)H-(\operatorname{Im}
\alpha)Y_0+(\operatorname{Re}\beta)Y_1-(\operatorname{Im}\beta)Y_2$,
which vanishes only for $\alpha=\beta=0$, so
$\mathfrak a\cap(\mathfrak t'\oplus\mathfrak z)=0$ and $\mathfrak g$ is a
Samelson subalgebra in the sense of Section~\ref{sec:irrational}. The
brackets are $[D_0,E]=2E$ and $[D_0,D_1]=[D_1,E]=0$.

The integer vectors $a\in\Z^5$ with $\sum_ja_j\zeta^j=0$ are the
multiples of $(1,\ldots,1)$, since the minimal polynomial of $\zeta$ is
$1+x+x^2+x^3+x^4$. Hence the closure of the one-parameter groups
generated by $iY_1,iY_2$ in $(S^1)^5$ is $\{\prod_ju_j=1\}$, and the
image of $\exp\mathfrak z$ in the torus of $\PP(V)$ is not closed: the
example is irrational.

\begin{proposition}\label{prop:N5}
The tangency locus is
$$
H^*=\Bigl\{\sum_jv_jv_j^*=\tfrac12\Bigl(\sum_j\|v_j\|^2\Bigr)I_2,\ \
\sum_j\|v_j\|^2=|c|^2,\ \ \sum_j\lambda_j\|v_j\|^2=0\Bigr\}\setminus\{0\},
$$
the infinitesimal stabilizers vanish on it, and by
Theorem~\ref{thm:irrational}
$$
N=H^*/\C^*\cong\Bigl\{(v_j)\in(\C^2)^5:\ \sum_jv_jv_j^*=\tfrac12I_2,\ \
\sum_j\zeta^j\|v_j\|^2=0\Bigr\}
$$
is a compact complex manifold of dimension $7$.
\end{proposition}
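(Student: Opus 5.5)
I will compute the moment-map conditions $\mu_X=0$ one summand of $\mathfrak k=\mathfrak{su}(2)\oplus\mathfrak z$ at a time. For $X\in\mathfrak{su}(2)$, acting diagonally on the $v_j$, one has $\mu_X(v,c)=\sum_j\langle iXv_j,v_j\rangle=\operatorname{tr}\bigl(iX\sum_jv_jv_j^*\bigr)$. The trace-zero Hermitian matrices $iX$ pair non-degenerately with the traceless part of a Hermitian matrix, so these conditions together say that $\sum_jv_jv_j^*$ is a scalar matrix. Taking the trace, that scalar is $\tfrac12\sum_j\|v_j\|^2$. For $X=iY_0$ the condition reads $\sum_j\|v_j\|^2-|c|^2=0$. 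For $X=iY_1$ and $X=iY_2$ the conditions read $\sum_j\operatorname{Re}\zeta^j\|v_j\|^2=0$ and $\sum_j\operatorname{Im}\zeta^j\|v_j\|^2=0$, that is, $\sum_j\lambda_j\|v_j\|^2=0$. This gives the displayed description of $H^*$.

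For the stabilizers, let $z=(v,c)\in H^*$ and let $X+t_0iY_0+t_1iY_1+t_2iY_2$ kill $z$. On $H^*$ we have $c\ne0$, since otherwise all $v_j=0$ and $z=0$; the $c$-component then forces $t_0=0$. On each $v_j$ the element acts as $X+i(t_1\operatorname{Re}\zeta^j+t_2\operatorname{Im}\zeta^j)I$. Write $\theta_j=t_1\operatorname{Re}\zeta^j+t_2\operatorname{Im}\zeta^j$; then each nonzero $v_j$ is an eigenvector of $X$ with eigenvalue $-i\theta_j$. The key step is that the nonzero $v_j$ span $\C^2$, because $\sum_jv_jv_j^*$ is a positive multiple of $I_2$. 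So $X$ is diagonalizable in a basis drawn from the $v_j$, with eigenvalues among the $-i\theta_j$.

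I split into cases on $X$. If $X\neq0$, it has the two distinct eigenvalues $\pm i\alpha$ with $\alpha\ne0$. Every nonzero $v_j$ then lies in one of the two eigenlines, with $\theta_j=\mp\alpha$. By the first condition, $\sum_jv_jv_j^*=\tfrac12\bigl(\sum_j\|v_j\|^2\bigr)I_2$, the total squared norms on the two lines are equal. Let $J_\pm$ denote the sets of indices on the two lines. The condition $\sum_j\lambda_j\|v_j\|^2=0$ together with $\theta_j=\langle t,\lambda_j\rangle$ gives $0=\sum_j\theta_j\|v_j\|^2=-\alpha\sum_{J_+}\|v_j\|^2+\alpha\sum_{J_-}\|v_j\|^2$. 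This is automatically satisfied, so I need a finer argument, which is the main obstacle. Points $\lambda_j$ with $j\in J_+$ satisfy the affine equation $\langle t,\lambda\rangle=-\alpha$, and those in $J_-$ satisfy $\langle t,\lambda\rangle=\alpha$. These are two parallel lines in $\R^2$, each meeting the pentagon's vertices in at most two points. The weighted barycenters of the two groups are then $b_+$ and $b_-$ with $b_++b_-=0$, taking equal total weights. I would check that for a regular pentagon no such configuration exists. The pairs on parallel chords would have to be centrally symmetric, but the pentagon has no central symmetry and no two of its chords are related by $-1$. The case where a group is a single vertex is excluded similarly, since $-\lambda_j$ is never on a chord. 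This forces $X=0$. Then $\theta_j=0$ for every nonzero $v_j$, so the nonzero $\lambda_j$ lie on a line through the origin with $t$ orthogonal to it. But $0=\sum\lambda_j\|v_j\|^2$ with at least two nonzero $v_j$ would put $0$ in the convex hull of at most two antipodal vertices, which is impossible for the pentagon. Hence $t=0$.

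Finally, Theorem~\ref{thm:irrational} applies, with $n=11$, $k=3$ and dimension $11-3-1=7$. To obtain the model I normalize by $\C^*$, setting $|c|^2=1$ and then $c=1$ via the phase. This gives $\sum_j\|v_j\|^2=1$ and $\sum_jv_jv_j^*=\tfrac12I_2$, whose trace already encodes the norm condition, and the two real $\lambda$-conditions become the complex condition $\sum_j\zeta^j\|v_j\|^2=0$.
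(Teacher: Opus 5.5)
Your proposal is correct and follows the paper's proof. It uses the same computation of the quadrics $\mu_X$, the $c$-coordinate to get $t_0=0$, and the split into the cases $X\neq0$ and $X=0$, both settled by the facts that a line meets the unit circle at most twice and that no two pentagon vertices are antipodal. The only difference is that for $X\neq0$ the paper skips your barycenter analysis. Your two lines $\langle t,x\rangle=\mp\alpha$ are exchanged by $x\mapsto-x$, so the points $\lambda_j$ ($j\in J_+$) and $-\lambda_j$ ($j\in J_-$) are distinct points of the unit circle on the single line $\langle t,x\rangle=-\alpha$. Hence at most two $v_j$ are non-zero and $\sum_j\lambda_j\|v_j\|^2=0$ is impossible outright, which is precisely the justification your ``centrally symmetric'' step needs.
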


\begin{proof}
The equations are the vanishing of the quadrics $\mu_X$ for $X$ in
$\mathfrak{su}(2)$ and for $iY_0,iY_1,iY_2$. The second one gives
$c\neq0$, and normalizing $c=1$ gives the description of $N$. Suppose
that $X+i(t_0Y_0+t_1Y_1+t_2Y_2)$, $X\in\mathfrak{su}(2)$, kills
$z\in H^*$. The action on $c$ gives $t_0=0$; put $t=(t_1,t_2)$. If
$X\neq0$, after conjugation $X=i\theta H$ with $\theta\neq0$, every
non-zero $v_j$ lies in an eigenline of $H$, and
$\varepsilon_j\theta+t\cdot\lambda_j=0$ with $\varepsilon_j=\pm1$. Then
$t\neq0$, and the points $\varepsilon_j\lambda_j$, for the indices with
$v_j\neq0$, are distinct points of the unit circle (no two $\lambda_j$
are antipodal) lying on the line $t\cdot x=-\theta$; so there are at
most two of them. But the third equation expresses $0$ as a positive
combination of these $\lambda_j$, which is impossible for one or two
vertices of the pentagon. If $X=0$ and $t\neq0$, all the $\lambda_j$
with $v_j\neq0$ lie on the line $t\cdot x=0$, which contains at most
one vertex, and the same contradiction applies.
\end{proof}

\subsection{The affine model}
In the chart $c=1$ write $(v_0,\ldots,v_4)$ as a $2\times5$ matrix $M$
with columns $v_j$. The induced linear vector fields are
$$
D_0:M\mapsto(H+2iI)M,\qquad E:M\mapsto EM,\qquad
D_1:M\mapsto M\Omega,\quad \Omega=\diag(\zeta^0,\ldots,\zeta^4).
$$
Thus $\widetilde G$ acts on $\C^{2\times5}$ as $B_\tau\times C_w$, where
$B_\tau=\exp(\C(H+2iI)\oplus\C E)\subset GL(2,\C)$ (the subscript
anticipates the parameter $\tau$ of Section~\ref{sec:families}; here
$\tau=i$) acts on the left
and $C_w=\{e^{w\Omega}:w\in\C\}$ on the right. Let $U\subset\C^{2\times5}$
be the image of $\cS$; then $N=U/\widetilde G$, and $U\to N$ is a
holomorphic principal $\widetilde G$-bundle by
Theorem~\ref{thm:irrational} and Remark~\ref{rem:principal-bundle}.

For a matrix $M$ of rank $2$ let $W\in\Gr(2,5)$ be its row space, with
Pl\"ucker coordinates $p_{ij}$, let
$\Poly(W)=\operatorname{Conv}\{e_i+e_j:\ p_{ij}(W)\neq0\}\subset\R^5$,
and let $\lambda:\R^5\to\R^2$, $x\mapsto\sum_jx_j\lambda_j$.

\begin{lemma}\label{lem:U}
$U=\{M:\ \operatorname{rank}M=2,\ 0\in\operatorname{int}\lambda(\Poly(W))\}$.
In particular $U$ is invariant under $GL(2,\C)$ acting on the left and
under the diagonal torus $(\C^*)^5$ acting on the right. Moreover the
set $Z_5=\{W\in\Gr(2,5):\ \sum_j\zeta^j(\Pi_W)_{jj}=0\}$, where $\Pi_W$
is the orthogonal projection onto $W$, is compact, and the map
$C_w\times Z_5\to\Gr(2,5)^s=\{W:0\in\operatorname{int}\lambda(\Poly(W))\}$,
$(w,W)\mapsto W e^{w\Omega}$, is a diffeomorphism.
\end{lemma}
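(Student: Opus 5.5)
The plan is to reduce everything to two steps: first the $GL(2,\C)$ part of the group, then the torus part. In the chart $c=1$, a point $M$ lies in $U$ exactly when some element of $\widetilde S$, acting as $M\mapsto gMe^{w\Omega}$ times scalars, carries it to the normalized tangency locus $\{MM^*=\tfrac12\operatorname{tr}(MM^*)I_2,\ \sum_j\zeta^j\|v_j\|^2=0\}$. The $c$-equation only fixes the overall scale, since $Y_0$ acts on $c$ with weight $-1$ and on the $v_j$ with weight $1$. Rank $2$ is necessary, because $MM^*$ must be a positive multiple of $I_2$ and $M\ne0$. Conversely, for rank $2$ we use Gram--Schmidt, as in Example~\ref{ex:U2}: left multiplication by $GL(2,\C)$ makes the rows orthonormal up to scale. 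This does not change the row space $W$. For orthonormal rows, the column norms satisfy $\|v_j\|^2=(\Pi_W)_{jj}$. The remaining condition $\sum_j\zeta^j\|v_j\|^2=0$ can therefore be restated as $W\cdot e^{w\Omega}\in Z_5$ for some $w$. So $U$ is the preimage, under $M\mapsto W$, of $C_w\cdot Z_5$. This already gives the invariance under left $GL(2,\C)$. Right invariance under $(\C^*)^5$ will follow once $C_w\cdot Z_5$ is identified with the set $\Gr(2,5)^s$ described by the polytope condition, because that condition depends only on which Plücker coordinates vanish.

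The core of the proof is the torus step, which is a convexity argument. Since $\Omega$ is diagonal, we write $w=x+iy$. The imaginary part $e^{iy\Omega}$ is diagonal unitary, so it preserves the diagonal of $\Pi_W$ up to a reparametrization of $x$. The real action by $e^{x\Omega}$ multiplies column $j$ by $e^{x\operatorname{Re}(\zeta^j)-y\operatorname{Im}(\zeta^j)}$. This reduces the problem to the two-real-parameter group $\exp(s\cdot\lambda)$, with $s\in\R^2$, acting by column scalings $e^{s\cdot\lambda_j}$. Here $\lambda_j=(\operatorname{Re}\zeta^j,\operatorname{Im}\zeta^j)$ and $\sum_j\zeta^j(\Pi_W)_{jj}=0$ means $\sum_j\lambda_j(\Pi_W)_{jj}=0$.

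Next consider
$$F_W(s)=\log\sum_{i<j}|p_{ij}(W)|^2e^{2s\cdot(\lambda_i+\lambda_j)}$$
(for a fixed Plücker lift). Its gradient is $2\sum_j\lambda_j(\Pi_{W_s})_{jj}$, where $W_s=We^{s\cdot\lambda}$, by the identity $(\Pi_W)_{jj}=\sum_{i}|p_{ij}|^2/\sum|p|^2$; this is the standard moment map of the torus on $\Gr(2,5)$. The function $F_W$ is convex, and it is strictly convex if its exponents $\lambda(e_i+e_j)$ with $p_{ij}\ne0$ affinely span $\R^2$. It attains a critical point if and only if $0\in\operatorname{int}\lambda(\Poly(W))$. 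These are the standard facts about log-sum-exp: properness holds iff $0$ is interior to the Newton polytope. In the degenerate case where the exponents lie on a line, properness fails, and this is consistent with the interior condition. Hence the torus orbit meets $Z_5$ iff $W\in\Gr(2,5)^s$, and then it meets $Z_5$ in exactly one point modulo the unitary part. The unitary part acts trivially on the diagonal of $\Pi$ but moves $W$. So the map $(w,W)\mapsto We^{w\Omega}$ is a bijection $C_w\times Z_5\to\Gr(2,5)^s$, once we note that $w\mapsto e^{w\Omega}$ is injective on the relevant orbits. This injectivity follows from the freeness in Theorem~\ref{thm:irrational}.

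To get a diffeomorphism, rather than only a bijection, I would verify transversality. This is the nondegeneracy of the Hessian of $F_W$ at its minimum, which is strict convexity, together with a check that the imaginary directions are independent. Both should follow by the argument of Proposition~\ref{prop:tangency}(3), which gives a local diffeomorphism; bijectivity then finishes. Compactness of $Z_5$ holds because it is a closed subset of the compact $\Gr(2,5)$; its points are automatically stable, as the proof of Proposition~\ref{prop:N5} shows.

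The main obstacle is pinning down the exact polytope condition. One must show that existence of a critical point of $F_W$ is equivalent to $0$ lying in the interior of $\lambda(\Poly(W))$, not merely in $\lambda(\Poly(W))$. For this I would use the stabilizer analysis of Proposition~\ref{prop:N5}: if $0$ is on the boundary, the minimizing support lies on a line through the origin, and this produces a nontrivial infinitesimal stabilizer, which was excluded there.
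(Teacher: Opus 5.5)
Your proposal is essentially the paper's proof. It has the same three steps: the Gram--Schmidt reduction of $U$ to the preimage of $C_w\cdot Z_5$; the convex potential $F_W$, which is proper with a non-degenerate minimum when $0\in\operatorname{int}\lambda(\Poly(W))$; and, for the converse, a critical point whose support lies on a line $h\cdot x=0$, giving the infinitesimal stabilizer $A+i(h_1Y_1+h_2Y_2)$ forbidden by Proposition~\ref{prop:N5}.

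Two small corrections. First, the general log-sum-exp fact only gives a critical point iff $0$ lies in the \emph{relative} interior, so the stabilizer step, where $\operatorname{tr}A=0$ comes precisely from $W\in Z_5$, is genuinely needed, as you note at the end. Second, $e^{iy\Omega}$ is not unitary: it is the phase factor $\diag(e^{i\operatorname{Im}(w\zeta^j)})$ of $e^{w\Omega}$ that preserves $Z_5$, while the modulus $\diag(e^{\operatorname{Re}(w\zeta^j)})$ gives your $s=(x,-y)$.
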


\begin{proof}
By Gram--Schmidt, every $M$ of rank two is in the $GL(2,\C)$-orbit of
a matrix with $MM^*=\tfrac12I$, whose row space is the same; and for
such $M$, $\|v_j\|^2=\tfrac12(\Pi_W)_{jj}$. Since
$\widetilde S=\widetilde G\widetilde K$ and $\widetilde K$ preserves
$H^*$, $\cS=\widetilde G\cdot H^*=\widetilde S\cdot H^*$, so $U$ is the
image of $\widetilde S\cdot H^*$. In the chart, $\widetilde S$ acts by
all left multiplications by $GL(2,\C)$ and on the right by diagonal
matrices whose absolute values are those of the elements of $C_w$;
the remaining unitary diagonal factors preserve the diagonal of
$\Pi_W$. So $M\in U$ if and only if
$\operatorname{rank}M=2$ and the $C_w$-orbit of $W$ meets $Z_5$. For
$w=a+ib$ the absolute value of $e^{w\Omega}$ is $\diag(e^{\xi\cdot\lambda_j})$
with $\xi=(a,-b)$, and $w\mapsto\xi$ is a real isomorphism. The
function
$$
F_W(\xi)=\log\sum_{i<j}|p_{ij}(W)|^2e^{2\xi\cdot\lambda(e_i+e_j)}
$$
is convex, and its critical points are the $\xi$ with
$W\diag(e^{\xi\cdot\lambda_j})\in Z_5$. If $0$ lies in the interior
of $\lambda(\Poly(W))$, the support affinely spans $\R^2$; the
Hessian is a positive definite covariance form, and $F_W$ is proper.
It therefore has a unique critical point, a non-degenerate minimum.

Conversely, at a critical point the gradient equation expresses $0$
as a strictly positive convex combination of every projected weight
in the Pl\"ucker support. Thus $0$ lies in the relative interior of
the projected polytope. Suppose its affine span were proper. Since
it contains $0$, there would be $h\in\R^2\setminus\{0\}$ such that
$h\cdot(\lambda_i+\lambda_j)=0$ for every nonzero minor. At the
critical row space $W'\in Z_5$, put
$D=\diag(h\cdot\lambda_j)$. The unitary flow $e^{itD}$ fixes $W'$,
as follows from its action on the Pl\"ucker coordinates. Choose a
frame $M$ for $W'$ with $MM^*=\tfrac12I_2$. There is then a
skew-Hermitian matrix $A$ with $AM+MiD=0$. Moreover
$$
\tfrac12\operatorname{tr}A
=-i\operatorname{tr}(MDM^*)
=-i\sum_j(h\cdot\lambda_j)\|v_j\|^2=0.
$$
Hence $A\in\mathfrak{su}(2)$, and
$A+i(h_1Y_1+h_2Y_2)$ is a nonzero infinitesimal stabilizer of
$(M,1)\in H^*$, contrary to Proposition~\ref{prop:N5}.
Thus the projected polytope has full dimension and $0$ lies in its
ordinary interior. This also proves uniqueness and non-degeneracy
whenever the orbit meets $Z_5$.
The unitary part of
$e^{w\Omega}$ preserves $Z_5$. This proves the description of $U$ and the
bijectivity of the map; non-degeneracy of the minimum gives the local
diffeomorphism property, and $Z_5$ is closed in $\Gr(2,5)$. The
invariance statements hold because $\Poly(W)$ depends only on which
Pl\"ucker coordinates vanish.
\end{proof}

\begin{lemma}\label{lem:codim}
The complement of $U$ in $\C^{2\times5}$ is a closed algebraic subset
of complex codimension at least $3$.
\end{lemma}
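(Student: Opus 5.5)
The plan is to stratify $\C^{2\times5}$ by the \emph{matroid} of $M$, meaning the set of Pl\"ucker coordinates $p_{ij}$ that do not vanish. By Lemma~\ref{lem:U}, membership in $U$ depends only on this matroid, so the complement of $U$ is a finite union of matroid strata. We then show that every stratum of codimension at most $2$ lies in $U$.

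\emph{Closedness and algebraicity.} Closedness is immediate, since $U$ is open: it is the image of $\cS$, and Lemma~\ref{lem:U} exhibits it through a diffeomorphism. Each matroid stratum is constructible, being defined by the vanishing of some $p_{ij}$ and the non-vanishing of others. The complement of $U$ is therefore a closed constructible set, hence a closed algebraic subset. It then suffices to bound the dimension of each stratum it contains.

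\emph{Codimension of the strata.} First, the matrices of rank $\le1$ form a cone of dimension $2+5-1=6$, so they have codimension $4$. For rank $2$, a stratum is described by a set $L$ of zero columns (loops) together with a partition of the remaining columns into parallel classes $P_1,\dots,P_r$, with $r\ge2$. A column forced to vanish costs $2$ conditions, and a column forced to be proportional to a given nonzero column costs $1$. Hence the stratum has codimension
$$
2|L|+\sum_a(|P_a|-1),
$$
and $p_{ij}\neq0$ exactly when $i,j\notin L$ lie in different classes. The strata of codimension $\le2$ are therefore the following:
\begin{enumerate}
\item the generic stratum, with all $p_{ij}\neq0$;
\item one loop $j$;
\item one parallel pair $\{a,b\}$;
\item two disjoint parallel pairs;
\item one parallel triple.
\end{enumerate}

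\emph{Each of these strata lies in $U$.} For every such stratum we must show that $0\in\operatorname{int}\lambda(\Poly(W))$. Equivalently, for every $h\in\R^2\setminus\{0\}$ there is a supported pair $\{i,j\}$ with $h\cdot(\lambda_i+\lambda_j)>0$.

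Put $t_j=h\cdot\lambda_j$ and order them as $t_{(1)}\ge\dots\ge t_{(5)}$. Since $\sum_jt_j=0$ and not all $t_j$ vanish, $t_{(1)}>0$. The vertices of the pentagon have angular gaps of $72^\circ$, so every open half-plane through the origin contains two or three vertices and every line through the origin contains at most one. This gives $t_{(1)},t_{(2)}>0$, and moreover $t_{(1)}+t_{(3)}>0$ and $t_{(2)}+t_{(3)}>0$, so all three pairs among the top three indices have positive sum. I would check these inequalities using the explicit values $\cos(2\pi k/5)$ and the dihedral symmetry.

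The cases then go as follows:
\begin{enumerate}
\item In the generic and one-parallel-pair strata, at most one pair is excluded, and three positive pairs are available.
\item In the two-pairs and triple strata, the excluded pairs are pairwise disjoint or form a triangle. In every configuration except a parallel triple equal to the top three indices, some positive pair among the top three survives. In that remaining configuration I would use a pair $\{(1),(4)\}$ instead, whose sum is positive by the same angular estimate.
\item In the loop stratum, removing index $j$ leaves four vertices. They still contain $0$ in the interior of their convex hull, and the argument is rerun with the top two remaining values, which are both positive.
\end{enumerate}

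The main obstacle is this last finite verification. The strata with a parallel triple or with a loop involve sums in which only marginal angular estimates are available, for instance $t_{(1)}+t_{(4)}$. My first approach is to reduce, by the dihedral symmetry, to $h$ ranging over a fundamental arc of angle $36^\circ$. On that arc the ordering of the $t_j$ is fixed, and each inequality can be checked at the two endpoints, since every $t_j$ is a sinusoid in the angle of $h$.
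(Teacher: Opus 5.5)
Your overall strategy works, and it is essentially the contrapositive of the paper's argument. The paper bounds the codimension of unstable strata directly. It uses the criterion that $W$ is unstable if and only if, for some unit $u$, the two largest class maxima of $u\cdot\lambda_j$ have nonpositive sum, together with an angular argument. You instead list the strata of codimension at most $2$ and verify that each is stable. Your list agrees with the paper's, and so does your codimension $2|L|+\sum_a(|P_a|-1)=5-r+|L|$.

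The inequalities you defer do hold. Let $\delta\in[0,\pi/5]$ be the angle from $h$ to the nearest vertex. Then the ordered values are $\cos\delta$, $\cos(\frac{2\pi}{5}-\delta)$, $\cos(\frac{2\pi}{5}+\delta)$, $\cos(\frac{4\pi}{5}-\delta)$, $\cos(\frac{4\pi}{5}+\delta)$. Hence $t_{(1)}+t_{(3)}=2\cos\frac{\pi}{5}\cos(\frac{\pi}{5}+\delta)$, $t_{(2)}+t_{(3)}=2\cos\frac{2\pi}{5}\cos\delta$ and $t_{(1)}+t_{(4)}=2\cos\frac{2\pi}{5}\cos(\frac{2\pi}{5}-\delta)$ are all positive. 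So the generic, parallel-pair, two-pairs and triple cases are correct as you describe them.

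The loop case, however, rests on a false assertion: the two largest remaining values need not both be positive. An open half-plane may contain only two vertices, and one of them may be the deleted column. For example, let $h$ bisect the edge from $\lambda_0$ to $\lambda_1$ and let the zero column be $j=0$. The remaining values are $\cos\frac{\pi}{5}$, $\cos\frac{3\pi}{5}$, $-1$, $\cos\frac{3\pi}{5}$, and the second largest is negative. Nor does $0\in\operatorname{int}\operatorname{Conv}$ of the four remaining vertices give what you need, since the condition concerns pairwise sums.

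The repair uses only what you already have. In this stratum every pair of remaining indices is supported. The four remaining indices always include at least two of $(1),(2),(3)$, and every pair among these has positive sum. Concretely, use $t_{(2)}+t_{(3)}$ if $j=(1)$, $t_{(1)}+t_{(3)}$ if $j=(2)$, and $t_{(1)}+t_{(2)}$ otherwise. With this replacement the proof is complete.
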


\begin{proof}
The matrices of rank at most one form a subset of codimension $4$.
The rest of the complement is the preimage, under the submersion
$M\mapsto W$, of $\Gr(2,5)\setminus\Gr(2,5)^s$, which is closed and a
union of matroid strata since instability is preserved under
specialization. A stratum is given by the set $L$ of zero columns and
the partition of the other indices into $k\geq2$ classes of
proportional columns; it has dimension $k+(5-|L|)-4$, hence codimension
$5-k+|L|$ in $\Gr(2,5)$. For such $W$,
$\Poly(W)=\{x\in[0,1]^5:\sum x_j=2,\ x_L=0,\ \sum_{j\in A}x_j\leq1
\text{ for each class }A\}$, and the maximum of $u\cdot\lambda(x)$ on it
is $v_{(1)}(u)+v_{(2)}(u)$, the sum of the two largest class values
$v_A(u)=\max_{j\in A}u\cdot\lambda_j$. So $W$ is unstable if and only if
$v_{(1)}(u)+v_{(2)}(u)\leq0$ for some unit vector $u$.

Suppose $L=\varnothing$, and let $X$ be the class containing an index
$j_*$ maximizing $u\cdot\lambda_j$; the angle $\delta$ between $u$ and
$\lambda_{j_*}$ is at most $\pi/5$. Every $j\notin X$ satisfies
$u\cdot\lambda_j\leq-\cos\delta$, that is, $\lambda_j$ is within angle
$\delta$ of $-u$. Two vertices of the regular pentagon are within
$\pi/5$ of $-u$ only if $-u$ bisects an edge; then $u$ points to the
opposite vertex, $\delta=0$, and $u\cdot\lambda_j\leq-1$ is impossible.
So $|X|\geq4$, $k\leq2$, and the codimension is at least $3$. If
$|L|=1$, the four remaining vertices leave a gap of angle $4\pi/5$, so
$\delta\leq2\pi/5$; if all classes were singletons, the three indices
outside $X$ would lie within angle $\delta$ of $-u$, which forces $-u$
to be a vertex, and then $\delta\leq\pi/5$, a contradiction. So some
class has two elements, $k\leq3$, and the codimension is at least $3$.
If $|L|\geq2$ it is at least $4$.
\end{proof}

\subsection{Invariants}

\begin{theorem}\label{thm:N5}
The compact complex manifold $N$ of dimension $7$ has the following
properties.
\begin{enumerate}
\item $N$ is $4$-connected: $\pi_j(N)=0$ for $1\leq j\leq4$.
\item $N$ contains Hopf surfaces; in particular it is not K\"ahler.
\item Every holomorphic map from $N$ to a projective space is
constant.
\item The Lie algebra of holomorphic vector fields on $N$ is abelian
of complex dimension $4$, and the maximal compact subgroup of the
identity component $\operatorname{Aut}^0(N)$ is a torus of real
dimension $6$.
\end{enumerate}
\end{theorem}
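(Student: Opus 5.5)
Everything will be read off the principal bundle $U\to N=U/\widetilde G$ of Remark~\ref{rem:principal-bundle}, with $\widetilde G\cong\C^3$ contractible (Lemma~\ref{lem:Samelson} in the form of Theorem~\ref{thm:irrational}), and from the codimension estimate of Lemma~\ref{lem:codim}.

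\emph{Part (1).} The fibre $\widetilde G$ is contractible, so $N$ is homotopy equivalent to $U$. Since $\C^{2\times5}\setminus U$ is a closed algebraic set of complex codimension at least $3$, hence real codimension at least $6$, general position gives $\pi_j(U)=\pi_j(\C^{10})=0$ for $j\le 4$. So $N$ is $4$-connected.

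\emph{Part (2).} I will restrict to a stratum where the column polytope is degenerate in a controlled way. Fix $W\in Z_5$ and consider the matrices with row space $W$. This set is $GL(2,\C)\cdot M_0$, and $B_\tau\backslash GL(2,\C)$ is a homogeneous Hopf surface, as in Example~\ref{ex:U2}. The subgroup of $C_w$ preserving $W$ is trivial for generic $W$ by Lemma~\ref{lem:U}. Hence $\{[M]:\text{row space }M=W\}$ is a compact complex submanifold isomorphic to $B_\tau\backslash GL(2,\C)$, possibly up to a finite quotient, and this is a Hopf surface. The map $N\to Z_5\cong\Gr(2,5)^s/C_w$ exhibits it as a fibre. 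A Hopf surface has $b_1=1$, so it is not K\"ahler, and neither is $N$.

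\emph{Part (3).} A holomorphic map $N\to\PP^r$ pulls back to $U$. Since $\operatorname{Pic}(U)=\operatorname{Pic}(\C^{10})=0$ by codimension $\ge2$, the map is given by $r+1$ holomorphic functions on $U$ without common zero. By Hartogs they extend to polynomials, or entire functions, on $\C^{2\times5}$. They are $\widetilde G$-equivariant up to a common character $\chi$, and each extension is a $\chi$-eigenfunction for the linear $\widetilde G$-action. I then use the generator $D_0$: it acts by $(H+2iI)M$ on the left, with eigenvalues $1+2i$ and $-1+2i$. More precisely, I take the combination $-iD_0$, whose eigenvalues $2\mp i$ all have positive real part. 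An entire eigenfunction of a linear flow all of whose eigenvalues lie in an open half-plane is a polynomial whose monomials have weights in that half-plane. Combining this with the eigenvalue constraints from $D_1$, via $\Omega$, and from $E$ should force all the $f_i$ to be proportional on $U$. I expect this step to be the main obstacle: ratios of eigenfunctions must be $\widetilde G$-invariant, and I would try to show, by Poincaré--Dulac type resonance counting, that the only invariant meromorphic ratios come from functions vanishing on the whole stratum, so that the map is constant.

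\emph{Part (4).} Holomorphic vector fields on $N$ lift to $\widetilde G$-invariant vector fields on $U$ modulo $\mathfrak g$. Using Remark~\ref{rem:principal-bundle} and Scheja extension across codimension $\ge3$ (which kills $H^1$ of the trivial bundle), they come from holomorphic vector fields on $\C^{10}$ normalized by $\mathfrak g$. The same half-plane argument, applied to the adjoint action of $D_0$, forces them to be linear. They must then commute with $D_1$ modulo $\mathfrak g$, which I expect makes them diagonal on the right; and they must commute with $B_\tau$ modulo $\mathfrak g$ on the left. The resulting algebra is $\C I_2\oplus\mathfrak d_5$, the centralizers, modulo $\mathfrak g$. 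This gives $1+5-2=4$ abelian dimensions. Its maximal compact subgroup is generated by the left unitary diagonal torus and the right $(S^1)^5$ modulo scalars, giving real dimension $1+5=6$ after quotienting by the image of $\widetilde G$.
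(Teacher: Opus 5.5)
Parts (1) and (2), and the lifting and linearization steps of (4), follow the paper. One justification is wrong: $\operatorname{Pic}(U)=0$ needs the codimension $\geq3$ bound, through Scheja's $H^1(U,\mathcal O)=0$. Codimension two is not enough: $\C^2\setminus\{0\}$ has an infinite-dimensional Picard group. Also, Lemma~\ref{lem:U} makes $C_w$ act freely on all of $\Gr(2,5)^s$, so no finite quotient occurs in (2).

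The genuine gap is part (3). You leave it open, and the route you propose cannot close it.

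First, in $s_i(gM)=\chi(g,M)s_i(M)$ the multiplier is a cocycle depending on $M$, not a character. So the extended $s_i$ are not eigenfunctions a priori. The paper handles this as follows:
\begin{itemize}
\item It sets $\kappa_0=(D_0s_i)/s_i$. This is independent of $i$, holomorphic on $U$ because the $s_i$ have no common zero, and entire by Hartogs.
\item It uses the half-plane property in its essential form: $D_0$ multiplies a monomial of degree $|\alpha|$ by a number of modulus $\geq2|\alpha|$. Hence $D_0\psi=\kappa_0-\kappa_0(0)$ has an entire solution.
\item Replacing $s_i$ by $e^{-\psi}s_i$ makes all $s_i$ eigenfunctions of $D_0$ with one eigenvalue, hence homogeneous polynomials of one row bidegree.
\end{itemize}

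Second, trying to show that invariant ratios are trivial is hopeless. $N$ has algebraic dimension $3$ (Proposition~\ref{prop:function-field}). For example, $t_j=p_{0j}y_1/(p_{01}y_j)$ is a nonconstant invariant ratio of two polynomials with the same row bidegree and the same column multidegree.

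What rules out holomorphic maps is that the $s_i$ have no common zero on $U$. The missing steps are these:
\begin{itemize}
\item Since $[D_0,D_1]=0$, $\kappa_1$ is constant. So all monomials of all $s_i$ share the $D_1$-eigenvalue $\sum_j n_j\zeta^j$ and the total degree.
\item Integer relations $\sum_j a_j\zeta^j=0$ are multiples of $(1,\ldots,1)$. Together with the fixed total degree, this gives one common column multidegree $n$ for all the $s_i$.
\item For each $j$, $U$ contains a matrix whose $j$-th column is zero: rotate the balanced triangles of Proposition~\ref{prop:surjective-N5}. Some $s_i$ is nonzero there, which forces $n_j=0$.
\end{itemize}
Hence $n=0$ and every $s_i$ is constant. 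Your phrase about functions ``vanishing on the whole stratum'' points in this direction, but neither input is identified.

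In (4) your count of $4$ is right. Modulo $\mathfrak g$ one has $H\otimes I\equiv-2i\,I\otimes I$ and $E\equiv0$, so the normalizer is spanned by $\mathfrak g$ and the column-diagonal matrices. The compact-rank claim, however, is only asserted. Quotienting $T_c=(\C^*)^6$ by the diagonal part $\exp L$, with $L=\C D_0\oplus\C D_1$, could shrink the compact part, or fail to be Hausdorff, unless $\exp L$ is closed and meets the compact torus trivially. This is where the pentagon enters. Vanishing logarithmic absolute values give $\operatorname{Re}s=0$ and $\operatorname{Re}(2is+w\zeta^j)=0$ for all $j$, and affine spanning of the weights forces $s=w=0$. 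You also need two further facts: the induced action on $N$ has no additional kernel (the generic-stabilizer argument), and this group maps onto $\operatorname{Aut}^0(N)$, which is a complex Lie group by Bochner--Montgomery.
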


\begin{proof}
Let $\Delta=\C^{2\times5}\setminus U$, of codimension $\geq3$ by
Lemma~\ref{lem:codim}. Then holomorphic functions on $U$ extend to
$\C^{2\times5}$ (Hartogs), $H^1(U,\mathcal O)=0$ by Scheja's theorem
\cite{Scheja}, and $H^k(U;\Z)=0$ for $1\leq k\leq4$, since $\Delta$ has real
codimension at least $6$; hence $\operatorname{Pic}(U)=0$.

(1) A Whitney stratification of $\Delta$ has real codimension at least
$6$. For $j\leq4$, any map $S^j\to U$ extends to a map of a ball
into $\C^{10}$, which can be perturbed relative to its boundary to
avoid all strata, since $j+1<6$. Thus $U$ is $4$-connected.
The principal bundle $U\to N$ has contractible fibre
$\widetilde G\cong\C^3$, so the same holds for $N$.

(2) Choose $M_0\in U$ and let $Q=\{M\in U:\ W(M)\in C_w\cdot W(M_0)\}$.
By Lemma~\ref{lem:U}, $C_w\cdot W(M_0)$ is closed in $\Gr(2,5)^s$, so $Q$
is a closed complex submanifold of $U$; it is the orbit of
$GL(2,\C)\times C_w$, which acts on it freely, again by
Lemma~\ref{lem:U}. Its image in $N$ is the compact complex surface
$Q/\widetilde G\cong B_\tau\backslash GL(2,\C)$, which is $U(2)$ with a
Samelson structure, a Hopf surface (Lemma~\ref{lem:Samelson} and the
remark following it). A complex submanifold of a K\"ahler manifold is
K\"ahler, and Hopf surfaces are not.

(3) Let $f:N\to\PP^r_{\mathbb C}$ be holomorphic, and $F=f\circ p:U\to\PP^r_{\mathbb C}$.
Since $\operatorname{Pic}(U)=0$, $F=[s_0:\cdots:s_r]$ with $s_i$
holomorphic on $U$, hence entire, without common zeros on $U$, and
the $\widetilde G$-invariance of $F$ gives
$s_i(gM)=\chi(g,M)s_i(M)$ with $\chi$ holomorphic and non-vanishing,
the same for all $i$. Differentiating along the flows of $D_0$, $D_1$
and $E$ we get $D_0s_i=\kappa_0s_i$, $D_1s_i=\kappa_1s_i$,
$Es_i=\kappa_Es_i$. For each generator $D$, define $\kappa_D$ on
$\{s_i\neq0\}\cap U$ as $(Ds_i)/s_i$. Invariance of the projective
map makes these definitions agree on overlaps. Since the $s_i$ have
no common zero on $U$, they define a holomorphic function on $U$;
Hartogs extends it to an entire function. Thus the $\kappa_\bullet$
are entire and independent of $i$. The field $D_0$ acts on a monomial in the entries of $M$, of
degrees $a$ and $b$ in the two rows, by the eigenvalue
$(1+2i)a+(-1+2i)b$, whose imaginary part is $2(a+b)$. Hence $D_0$ is
invertible, with inverse of norm at most $1/2$ on each homogeneous
component, on entire functions without constant term, and there is an
entire $\psi$ with $D_0\psi=\kappa_0-\kappa_0(0)$. Explicitly, if
$\kappa_0-\kappa_0(0)=\sum_{|\alpha|>0}c_\alpha M^\alpha$,
divide each coefficient by its monomial eigenvalue $\lambda_\alpha$.
Since $|\lambda_\alpha|\geq2|\alpha|$, the resulting power series
converges absolutely on every polydisc on which the original series
does. Replacing $s_i$ by
$s_ie^{-\psi}$, which does not change $F$, we may assume that $\kappa_0$
is constant. Then each $s_i$ is a $D_0$-eigenvector, hence a
polynomial, homogeneous of a fixed bidegree $(a,b)$ in the two rows,
since $(a,b)\mapsto(1+2i)a+(-1+2i)b$ is injective. Since $D_0$
commutes with $D_1$, $D_0\kappa_1=0$, so $\kappa_1$ is constant; and
since $[D_0,E]=-2E$ as derivations, $\kappa_E$ is a $D_0$-eigenfunction of eigenvalue
$-2$, which does not occur, so $\kappa_E=0$. The field $D_1$
multiplies a monomial whose degrees in the five columns are
$(n_0,\ldots,n_4)$ by $\sum_jn_j\zeta^j$; since $\sum n_j=a+b$ is fixed
and $5$ is prime, all the $s_i$ involve only monomials of one and the
same column multidegree $n$.

For every column index $j$, $U$ contains a matrix with that column
zero: rotate either of the stable triangular supports in
Proposition~\ref{prop:surjective-N5}. At this point some $s_i$ must
be nonzero, which forces the common column degree $n_j$ to vanish.
Thus every column degree is zero and all the $s_i$ are constant.
Consequently $F$, and hence $f$, is constant.

(4) Let $\xi$ be a holomorphic vector field on $N$. The fundamental
fields of $D_0,D_1,E$ trivialize the vertical tangent bundle of
$U\to N$, and the obstruction to lifting $\xi$ to a holomorphic field on
$U$ lies in $H^1(U,\mathcal O)^3=0$. A lift $X$ extends to an entire
vector field on $\C^{2\times5}$, and since $X$ is $p$-related to $\xi$,
$[X,D_0]=f_0D_0+f_1D_1+f_EE$ on $U$. The coefficients are
holomorphic there because the fundamental fields form a global
holomorphic frame of the vertical bundle; they extend to entire
functions by Hartogs. Decompose $X$ and
the $f_\bullet$ into homogeneous components. The operator
$\operatorname{ad}D_0$ acts on monomial vector fields of degree $d$ with
eigenvalues of imaginary part $2(d-1)$, and $\operatorname{ad}D_0(hY)
=(D_0h)Y+h[D_0,Y]$ with $[D_0,Y]\in\{0,-2E\}$ for $Y\in\{D_0,D_1,E\}$.
Hence, for $d\neq1$, the degree-$d$ component $X_d$ is obtained from the
vertical field $f_0D_0+f_1D_1+f_EE$ by applying $-(\operatorname{ad}
D_0)^{-1}$, which preserves vertical fields. More explicitly, its inverse divides
coefficients of monomial vector fields of degree $d\ne1$ by
eigenvalues of modulus at least $2|d-1|$. In the vertical frame it
acts coefficientwise through $D_0$ and $D_0-2$, with the same
bound on coefficients of degree $d-1>0$. Thus the sums of the
vertical corrections converge on every polydisc. The degree-zero
part of $X$ vanishes, since the bracket on the right has no
degree-zero term. It follows that $X-X_1$ is a vertical entire
field, and $X_1$ is a \emph{linear} lift of $\xi$.

It remains to identify which linear fields descend. A linear field vertical on $U$ has unique holomorphic coefficients
in the fundamental frame $D_0,D_1,E$. These extend by Hartogs; invariance
under scalar dilation makes them homogeneous of degree zero, hence
constant. Therefore this field belongs to $\mathfrak g$, so $[X_1,\mathfrak g]\subset\mathfrak g$. Conversely, a
linear field normalizing $\mathfrak g$ preserves $U$ (it generates left
multiplications by $GL(2,\C)$ and right multiplications by $(\C^*)^5$,
see below) and descends to $N$. Therefore the Lie algebra of
holomorphic vector fields is $\mathfrak n/\mathfrak g$, where
$\mathfrak n\subset\mathfrak{gl}(10)$ is the normalizer of $\mathfrak g$.

We compute $\mathfrak n$. Write $\C^{2\times5}=\C^2\otimes\C^5$. If
$[X,D_1]\in\mathfrak g$, then, since $\mathfrak g$ lies in the
centralizer of $D_1=I\otimes\Omega$ and $[X,D_1]$ lies in the sum of the
non-zero eigenspaces of $\operatorname{ad}D_1$, $[X,D_1]=0$; as the
$\zeta^j$ are distinct, $X=\oplus_jX_j$ acts column by column. Then
$[X,D_0]=\oplus_j[X_j,H]$ has zero diagonal, while an element
$\alpha D_0+\beta D_1+\gamma E$ has diagonal entries $\alpha(\pm1+2i)+
\beta\zeta^j$; so $\alpha=\beta=0$ and $[X_j,H]=\gamma E$ for all $j$,
which forces $X_j$ to be upper triangular with the same $(1,2)$
entry for all $j$. Finally $[X_j,E]=(p_j-s_j)E$, where $p_j,s_j$ are
the diagonal entries of $X_j$, must be the same for all $j$. Hence
$$
\mathfrak n=\{B\otimes I+I\otimes P:\ B\in\mathfrak b,\ P\text{ diagonal}\},
$$
with $\mathfrak b$ the upper triangular matrices, of dimension $3+5-1=7$.
Since $[\mathfrak n,\mathfrak n]=[\mathfrak b,\mathfrak b]\otimes I=\C E
\subset\mathfrak g$, the quotient $\mathfrak n/\mathfrak g$ is abelian,
of dimension $4$.

For completeness, the global compact-rank calculation can be made
without assuming a priori that a quotient lattice is discrete.
After dividing out the common unipotent subgroup, the effective
normalizer is the diagonal torus $T_c=(\C^*)^6$ (the two row scalars
and five column scalars, modulo their common scalar kernel).
The diagonal part of $\widetilde G$ is the exponential of a complex
plane $L$ generated by $D_0,D_1$. If its logarithmic absolute values
vanish, the difference of the two row values gives
$\operatorname{Re}s=0$, and the remaining equations are
$\operatorname{Re}(2is+w\zeta^j)=0$ for all $j$. Since the pentagon
spans an affine plane, these force $w=s=0$. Thus logarithmic absolute
values embed $L$ as a closed real four-plane. In particular $\exp L$
is closed and contractible and intersects the compact torus of $T_c$
trivially. The quotient $T_c/\exp L$ has universal cover $\C^4$ and a
discrete period group of rank $6$, so its maximal compact subgroup has
real dimension $6$. This connected group maps onto
$\operatorname{Aut}^0(N)$, which is a complex Lie group by the theorem
of Bochner and Montgomery \cite{BoMo}: its differential is the Lie-algebra
isomorphism just computed, and an open subgroup of a connected group
is the whole group. A generic matrix with at least three distinct
column directions has stabilizer in the upper Borel times the column
torus equal to the common scalar kernel. Hence the induced quotient
action has trivial generic stabilizer and no further kernel.
\end{proof}

The stable triangular supports used in (3) exist by the following fact.

\begin{proposition}\label{prop:surjective-N5}
For every $r\in\R^5_{\geq0}$ with $\sum_jr_j=1$ and $\sum_j\lambda_jr_j=0$
there is a point of $N$ with $\|v_j\|^2=r_j$ for all $j$. In
particular $N$ contains points whose only non-zero columns are
$v_0,v_2,v_3$, and points whose only non-zero columns are
$v_1,v_3,v_4$.
\end{proposition}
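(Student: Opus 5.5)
The proof is a direct construction; the point is that the condition $\sum_j\lambda_jr_j=0$ is exactly what makes a simple explicit frame work. Identify $\R^2$ with $\C$, so that $\lambda_j=\zeta^j$ and the hypothesis reads $\sum_j r_j\zeta^j=0$. By Proposition~\ref{prop:N5}, it suffices to produce $(v_0,\ldots,v_4)\in(\C^2)^5$ with
$$
\sum_jv_jv_j^*=\tfrac12I_2,\qquad \sum_j\zeta^j\|v_j\|^2=0,\qquad \|v_j\|^2=r_j .
$$
The trace of the first equation is then automatically $\sum_j r_j=1$.

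I will take the ansatz
$$
v_j=\sqrt{r_j/2}\,\bigl(1,\ \zeta^j\bigr)^{T}.
$$
Then $\|v_j\|^2=\tfrac{r_j}{2}(1+|\zeta^j|^2)=r_j$, which gives the prescribed norms. The second equation is the hypothesis itself. For the first, a one-line computation gives
$$
\sum_jv_jv_j^*=\tfrac12\begin{pmatrix}\sum_jr_j & \sum_jr_j\overline{\zeta^j}\\ \sum_jr_j\zeta^j & \sum_jr_j\end{pmatrix}.
$$
The diagonal entries equal $\tfrac12\sum_j r_j=\tfrac12$. The off-diagonal entries vanish: the lower one is the hypothesis $\sum_j r_j\zeta^j=0$, and the upper one is its complex conjugate. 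Hence $(v_j)$ is a point of $N$ with $\|v_j\|^2=r_j$.

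It remains to exhibit admissible $r$ with the two prescribed supports.

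\emph{Support $\{0,2,3\}$.} Take $r_2=r_3=a>0$ and $r_0=b>0$. The vectors $\zeta^2$ and $\zeta^3$ are complex conjugates with real part $\cos(4\pi/5)<0$, so $a\zeta^2+a\zeta^3=2a\cos(4\pi/5)$ is a negative real number. The condition $\sum_jr_j\zeta^j=0$ therefore becomes $b+2a\cos(4\pi/5)=0$, i.e. $b=-2a\cos(4\pi/5)>0$. We then normalize $a$ so that $b+2a=1$.

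\emph{Support $\{1,3,4\}$.} This is the previous support shifted by one, so we multiply by $\zeta$. Set $r'_{j+1}=r_j$ with indices mod $5$. Then $\sum_j r'_j\zeta^j=\zeta\sum_j r_j\zeta^j=0$, and $r'$ is supported on $\{1,3,4\}$ with total mass $1$.

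I do not expect a real obstacle. The only idea needed is the ansatz with second coordinate $\zeta^j$, which turns the off-diagonal condition into the defining linear equation on $r$. An alternative would be the Schur--Horn theorem for rank-two projections, which needs $r_j\le\tfrac12$; this bound does follow from the hypothesis, by pairing with $\lambda_{j}$: if $r_j>\tfrac12$ then $\lambda_j\cdot\sum_i r_i\lambda_i\ge r_j-(1-r_j)>0$. The explicit frame avoids this detour.
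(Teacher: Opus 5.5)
Your proof is correct. It is the paper's construction with one step removed, and that step is worth naming. The paper first derives $r_j\le\tfrac12$ from the balancing condition. It then uses the fact that lengths $r_j\le\tfrac12$ with sum $1$ are the side lengths of a closed polygon in $\R^3$, i.e.\ there are unit vectors $u_j$ with $\sum_j r_ju_j=0$. Finally it sets $v_jv_j^*=\tfrac{r_j}{2}(I+u_j\cdot\sigma)$ with the Pauli matrices $\sigma$.

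Your frame is exactly this construction for the planar choice $u_j=(\lambda_j,0)$. With the standard Pauli matrices, $\tfrac{r_j}{2}\bigl(I+\cos(2\pi j/5)\sigma_1+\sin(2\pi j/5)\sigma_2\bigr)$ is precisely your $v_jv_j^*$. You noticed that the hypothesis $\sum_j r_j\lambda_j=0$ already supplies the closed polygon. So you need neither the bound $r_j\le\tfrac12$ nor the polygon-closing lemma, and you get an explicit formula.

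In exchange, the paper's route proves a little more. Every $r\ge0$ with $\sum_j r_j=1$ and all $r_j\le\tfrac12$ is the vector of column norms of some $M$ with $MM^*=\tfrac12I_2$, whether or not $r$ is balanced. Balancing is then used only for the second defining equation of $N$. Your explicit weights for the support $\{0,2,3\}$, and the shift by $\zeta$ giving $\{1,3,4\}$, are also correct. They replace the paper's remark that these triangles contain the centre of the pentagon in their interior.
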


\begin{proof}
Taking norms in $r_j\lambda_j=-\sum_{k\neq j}r_k\lambda_k$ gives
$r_j\leq\tfrac12$. Lengths $r_j\leq\tfrac12$ with sum $1$ are the edge
lengths of a closed polygon in $\R^3$: there are unit vectors $u_j$ with
$\sum_jr_ju_j=0$. With the Pauli matrices $\sigma$, the matrices
$\tfrac{r_j}2(I+u_j\cdot\sigma)$ are positive semidefinite of rank
$\leq1$ and trace $r_j$; write them as $v_jv_j^*$. Then
$\sum_jv_jv_j^*=\tfrac12I$. The triangles $\{0,2,3\}$ and $\{1,3,4\}$
contain the centre of the pentagon in their interior, so there are
such $r$ supported on them.
\end{proof}

\begin{corollary}\label{cor:new}
The compact complex manifold $N$ of Proposition~\ref{prop:N5} is not
biholomorphic to:
\begin{enumerate}
\item any compact complex manifold with a maximal torus action in the
sense of Ishida \cite{Ishida}; in particular to no LVM or LVMB
manifold \cite{LdMV,M,Bosio,BM}, no moment-angle manifold with its
standard complex quotient structure, no classical Calabi--Eckmann
manifold, and no compact toric variety;
\item any holomorphic bundle, in the orbifold sense, with fibre a
compact Lie group with a Samelson structure over a projective orbifold, nor any principal torus bundle
over a positive-dimensional projective variety;
\item any compact complex homogeneous manifold, and in particular any
compact Lie group with a left-invariant complex structure;
\item any compact complex manifold with infinite fundamental group,
such as the Hopf, Inoue, Oeljeklaus--Toma or Vaisman manifolds.
\end{enumerate}
\end{corollary}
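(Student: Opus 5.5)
Each item will be derived from an invariant computed in Theorem~\ref{thm:N5}, comparing it with the corresponding invariant of the excluded classes.

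For (1), a maximal torus action in Ishida's sense on a compact complex manifold of complex dimension $7$ is an effective action of a compact torus of real dimension $7$ (half the real dimension $14$). Such an action is by biholomorphisms and connected, so the torus lies in a maximal compact subgroup of $\operatorname{Aut}^0(N)$. By Theorem~\ref{thm:N5}(4) every such subgroup is a torus of real dimension $6<7$, and an effective torus action cannot exceed this rank. The listed classes (LVM, LVMB, moment-angle manifolds with their standard structure, Calabi--Eckmann manifolds, compact toric varieties) all carry maximal torus actions, by Ishida's results or directly. Toric varieties are in addition excluded because they are projective or Moishezon, whereas every map from $N$ to projective space is constant.

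For (2), let $\pi:N\to X$ be such a bundle over a projective orbifold. If $\dim X>0$, then composing $\pi$ with a projective embedding of the coarse space gives a nonconstant holomorphic map $N\to\PP^r_{\mathbb C}$, which contradicts Theorem~\ref{thm:N5}(3). The one subtle point is that $\pi$ is holomorphic onto the coarse space. If $\dim X=0$, then $N$ is a finite quotient of a Samelson group, hence homogeneous, and it has infinite fundamental group or non-trivial $\pi_1,\pi_3$ (a compact Lie group of positive dimension has $\pi_1\neq0$ or $\pi_3\neq0$). This contradicts the $4$-connectedness in Theorem~\ref{thm:N5}(1). The same argument treats principal torus bundles over a positive-dimensional projective variety.

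For (3), if $N$ were homogeneous, $\operatorname{Aut}^0(N)$ would act transitively. By Theorem~\ref{thm:N5}(4) it is an abelian complex Lie group of dimension $4<7$, so its orbits have dimension at most $4$, and transitivity is impossible. For (4), Theorem~\ref{thm:N5}(1) gives $\pi_1(N)=0$. Hopf, Inoue, Oeljeklaus--Toma and Vaisman manifolds all have infinite $\pi_1$, the Vaisman ones by the structure of their Lee field and the irrationality of their first Betti number. The main obstacle is the dimension-zero and orbifold bookkeeping in (2); the other items are immediate comparisons of invariants.
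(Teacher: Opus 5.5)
Your proposal is correct and follows the paper's proof item by item. For (1) you set the compact-torus bound of Theorem~\ref{thm:N5}(4) against the requirement $\dim T\geq7$; for (2) you use the constant-map property of Theorem~\ref{thm:N5}(3) together with the $\pi_1/\pi_3$ argument for a point base; for (3) you use $\dim\operatorname{Aut}^0(N)=4<7$; and for (4) you use simple connectivity.

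A few small slips need correcting.
\begin{enumerate}
\item Ishida's maximality condition ($\dim T+\dim T_x=14$ for some $x$) forces $\dim T\geq7$, not $\dim T=7$. Your bound $\dim T\leq6$ still excludes all such tori.
\item The side remark on toric varieties is unsound. Being Moishezon gives meromorphic maps to projective space, not holomorphic ones. The remark is redundant anyway, and can be replaced by $a(N)=3<7$.
\item For Vaisman manifolds the relevant fact is that $b_1$ is odd, not ``irrational''.
\end{enumerate}
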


\begin{proof}
(1) A compact torus acting holomorphically and effectively lies in
$\operatorname{Aut}^0(N)$, so its dimension is at most $6$ by
Theorem~\ref{thm:N5}(4). A maximal action on a manifold of complex
dimension $7$ needs a torus $T$ with $\dim T+\dim T_x=14$ for some $x$,
hence $\dim T\geq7$. The listed manifolds have maximal torus actions
\cite{Ishida}.
(2) Such a manifold admits a holomorphic map onto a projective
orbifold, hence a non-constant map to a projective space if the base
has positive dimension, contradicting Theorem~\ref{thm:N5}(3); if the
base is a point, the total space is a compact Lie group with a Samelson
structure or a finite free quotient of one. Neither can be $4$-connected. Indeed,
a compact connected Lie group with nontrivial semisimple part has
nonzero $\pi_3$, which is unchanged by a finite covering; a
positive-dimensional compact connected Lie group with trivial
semisimple part is a torus, and any finite free quotient of it has
infinite fundamental group. This contradicts
Theorem~\ref{thm:N5}(1).
(3) $\operatorname{Aut}^0(N)$ has dimension $4<7$, so it has no open
orbit.
(4) Theorem~\ref{thm:N5}(1).
\end{proof}

These are exclusions of complex structures, not of underlying
diffeomorphism types. In particular, the moment-angle exclusion
concerns the standard complex structures admitting maximal torus
actions. All Hopf manifolds are excluded by their fundamental groups,
without any assertion that every Hopf manifold admits such an action.

\begin{remark}
The proof of Theorem~\ref{thm:N5}(2) shows how much of the
non-abelian part is absorbed here: the map $M\mapsto W$ induces a
holomorphic fibration of $N$ over the compact complex manifold
$Z_5=\Gr(2,5)^s/C_w$, of dimension $5$, whose fibres are the Hopf
surfaces $B_\tau\backslash GL(2,\C)$.
In the notation of Section~\ref{sec:families}, where these objects
are defined for general weights $\omega$ (here $\omega_j=\zeta^j$ and
$\tau=i$), $P_\omega=U_\omega/C_\omega\to Z_\omega$ is a holomorphic
principal $GL(2,\C)$-bundle (using inversion to switch between left
and right principal actions if desired), and
$N_{\omega,\tau}=B_\tau\backslash P_\omega$. Thus these manifolds
fit the general construction of \cite[Theorem~3.3]{PT}, with a
non-projective base. The preceding corollary excludes a projective
orbifold base, not this broader construction. The base $Z_5$ is an LVM-type
manifold built on the Grassmannian instead of a projective space: the
irrational torus acts on the multiplicity space $\C^5$ of the
isotypic component $\C^2\otimes\C^5$, its columns can vanish
independently, and its stratification is governed by the pentagon
$\{r\geq0:\sum r_j=1,\ \sum\lambda_jr_j=0\}$. The absence of
non-constant holomorphic maps from $Z_5$ to projective space follows by
composition with the surjection $N\to Z_5$. Theorem~\ref{thm:Z}
below shows that $Z_5$, and more generally the bases $Z_\omega$ of the
families of Section~\ref{sec:families}, also admit no maximal torus action, are not homogeneous, and have
no Samelson-fibre bundle presentation over a projective orbifold.
\end{remark}

\section{Families, topology and algebraic reduction}\label{sec:families}

The pentagon is a seed rather than an isolated example. We give a
criterion separating the geometric conditions needed for smoothness
from the arithmetic condition needed to control meromorphic functions.
Throughout this section $m\geq5$ and $\omega_0,\ldots,\omega_{m-1}$ are
distinct points of the unit circle, no two of which are antipodal.
We identify $\C$ with $\R^2$ and write $\lambda_j=\omega_j$.
The example of Section~\ref{sec:new} is the case $m=5$,
$\omega_j=\zeta^j$ and $\tau=i$ of the construction below; in
particular the field $D_0=H+iY_0$ used there is $H+\tau Y_0$ with
$\tau=i$.
For a rank-two matrix $M$ with row space $W$, put
$$
P_\omega(W)=\operatorname{Conv}\{\omega_i+\omega_j:p_{ij}(W)\neq0\},
\qquad
U_\omega=\{M:\operatorname{rank}M=2,\ 0\in\operatorname{int}P_\omega(W)\}.
$$
We use the following conditions:
\begin{enumerate}
\item[(G)] $\C^{2\times m}\setminus U_\omega$ has complex codimension
at least three.
\item[(R)] If $a\in\Z^m$, $\sum_ja_j=0$ and
$\sum_ja_j\omega_j=0$, then $a=0$.
\end{enumerate}
Condition (R) is an affine integral non-resonance condition; it is
stronger than mere non-closedness of the rotation group.
We shall use the following consequence of (G), denoted by (D): for
every $j$ there is a matrix in $U_\omega$ whose $j$-th column is zero.
Indeed, the linear subspace $L_j=\{v_j=0\}$ has complex codimension
two and cannot be contained in the complement of $U_\omega$, whose
codimension is at least three. Thus (D) is not an additional hypothesis.

\begin{theorem}\label{thm:families}
Suppose (G) holds, and let $\operatorname{Im}\tau>0$.
On $(\C^2)^m\oplus\C$ use the same representation as in
Section~\ref{sec:new}, with $Y_1+iY_2$ acting by $\omega_j$ on the
$j$-th column, and take
$$
\mathfrak g_{\omega,\tau}
=\C(H+\tau Y_0)\oplus\C(Y_1+iY_2)\oplus\C E.
$$
It defines a compact complex manifold $N_{\omega,\tau}$ with
properties \textup{(1)--(3)} and \textup{(5)} below. If in addition
\textup{(R)} holds, then \textup{(4)} and \textup{(6)} hold as well:
\begin{enumerate}
\item Its complex dimension is $2m-3$, and its underlying smooth
manifold is
$$
\left\{M:\ MM^*=\tfrac12I_2,\quad
\sum_j\omega_j\|v_j\|^2=0\right\}.
$$
\item It is $4$-connected and admits no symplectic form.
\item It is a holomorphic bundle of Hopf surfaces over
$Z_\omega=\Gr(2,m)^s/C_\omega$, of complex dimension $2m-5$, where
$C_\omega=\{\operatorname{diag}(e^{w\omega_j}):w\in\C\}$.
\item Every holomorphic map $N_{\omega,\tau}\to\PP^r_{\mathbb C}$ is constant.
\item Its holomorphic vector-field algebra is abelian of dimension
$m-1$, and the maximal compact subgroup of its connected automorphism
group has real dimension $m+1$.
\item Its meromorphic function field is purely transcendental of
degree $m-2$. In particular, $a(N_{\omega,\tau})=m-2$.
\end{enumerate}
Under (G) alone it has no maximal torus action and is not a compact
complex homogeneous manifold. Under (G) and (R) it is also not a
Samelson-fibre bundle over a projective orbifold, including the
possibility of finite free quotients of the model fibre.
\end{theorem}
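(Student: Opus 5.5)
The plan is to transcribe the pentagon argument of Section~\ref{sec:new} to general weights $\omega$, isolating which step uses which hypothesis. The first step is the analogue of Proposition~\ref{prop:N5}. The tangency equations, normalized at $c=1$, give the model in (1). For vanishing infinitesimal stabilizers, I would argue as before: non-antipodality implies that a line meets at most two of the points $\pm\omega_j$. The relation $\sum_j\omega_j\|v_j\|^2=0$ then needs at least three non-collinear supporting points. This forces the support to have at least three columns, and that is guaranteed because (D) produces stable supports, together with the convexity argument of Lemma~\ref{lem:U}. Theorem~\ref{thm:irrational} then applies, and $\dim_\C N=2m+1-3-1=2m-3$. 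Lemma~\ref{lem:U} should go through verbatim with $\lambda_j=\omega_j$: the convex function $F_W$, the stabilizer contradiction, and the diffeomorphism $C_\omega\times Z_\omega\to\Gr(2,m)^s$. This gives (3), namely the Hopf fibres $B_\tau\backslash GL(2,\C)$ over $Z_\omega$, of dimension $2(m-2)-1=2m-5$. I would also redo Theorem~\ref{thm:N5}(2) for the fibres.

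For (2) I would use (G) in place of Lemma~\ref{lem:codim}. Real codimension at least $6$ makes $U_\omega$ $4$-connected, and the fibre $\C^3$ is contractible, so $N$ is $4$-connected as well. A symplectic form $\sigma$ on a compact manifold of real dimension $4m-6$ would satisfy $\sigma^{2m-3}\neq0$, hence $[\sigma]\neq0$ in $H^2(N;\R)$. But $H^2=0$ by Hurewicz, a contradiction. For (5) I would repeat Theorem~\ref{thm:N5}(4) word for word, using only the codimension of the complement, Scheja and Hartogs, and the fact that $\operatorname{ad}(H+\tau Y_0)$ has eigenvalues with imaginary part proportional to $\operatorname{Im}\tau\,(d-1)$. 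The normalizer is again $\mathfrak b\otimes I+I\otimes\{\text{diagonal}\}$ because the $\omega_j$ are distinct. This gives dimension $3+m-1-3=m-1$, and the torus $T_c=(\C^*)^{m+1}$ modulo the closed four-plane $\exp L$ has compact part of real dimension $m+1$. For (4), the Section~\ref{sec:new} argument reduces each $s_i$ to a polynomial of fixed bidegree. The place where $5$ being prime was used, namely uniqueness of column multidegree from $D_1$-eigenvalues, is replaced by (R): two multidegrees $n,n'$ with equal total degree and equal $\sum n_j\omega_j$ must coincide. Then (D) forces every column degree to vanish.

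The main obstacle is (6). I would study meromorphic functions by lifting to $U_\omega$: since $\operatorname{Pic}(U_\omega)=0$, each is a quotient $s/t$ of entire functions with the same $\widetilde G$-character. The $D_0$-normalization above makes $s$ and $t$ polynomials of the same bidegree, and (R) makes their column multidegrees equal. $E$-invariance together with left Borel invariance of the bidegree-equal ratio should force $s/t$ to be a ratio of $SL_2$-semi-invariants, that is, of Plücker polynomials, of the same column multidegree. So the field is the field of torus-invariant rational functions on $\Gr(2,m)$, that is, $\C(\Gr(2,m))^{(\C^*)^m}$, generated by cross-ratios and purely transcendental of degree $2(m-2)-(m-1)=m-3$. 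This count is off by one from $m-2$. The additional generator must come from the fibre direction, most likely the ratio of the two Borel row characters, a function like $p/q$ of unequal bidegree compensated through $\tau$. I would compute the character lattice carefully to locate it, and expect this bookkeeping to be the hardest part.

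The closing assertions then follow as in Corollary~\ref{cor:new}. A maximal torus action would need torus dimension at least $2m-3>m+1$, and homogeneity is excluded since $m-1<2m-3$. A Samelson-fibre bundle over a positive-dimensional projective orbifold would give a nonconstant map to projective space, contradicting (4), while a zero-dimensional base is excluded by $4$-connectedness.
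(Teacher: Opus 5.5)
Your treatment of (1)--(5) and of the closing exclusions is the paper's own. You transcribe Proposition~\ref{prop:N5}, Lemma~\ref{lem:U} and Theorem~\ref{thm:N5} to general weights, with (R) replacing the primality of $5$ in (4), (D) killing the column degrees, and distinctness and affine spanning of the $\omega_j$ giving the normalizer and the closedness of $\exp L$ in (5). One small correction: at every point of $H^*$ the balancing relation by itself forces at least three supporting columns, so (D) and the convex minimization are needed only for $H^*\neq\varnothing$.

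The genuine gap is in (6), and it is not bookkeeping. You reach the point where $f=s/t$ is a ratio of polynomials of equal row bidegree and equal column multidegree, invariant under $E$; there you agree with Proposition~\ref{prop:function-field}. So $f$ is invariant under the upper Borel $B$ on the left and the full column torus $(\C^*)^m$ on the right. Your next step is false: $B$-invariance does not make $f$ a ratio of $SL_2$-semi-invariants. Since $\left(\begin{smallmatrix}a&b\\0&d\end{smallmatrix}\right)$ sends the second row $y=(y_0,\ldots,y_{m-1})$ to $dy$, the $y_j$ are $B$-semi-invariants besides the $p_{ij}$. For instance, $y_1/y_j$ is a $B$-invariant of equal bidegree that is not a function of the Pl\"ucker coordinates. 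The field you obtain, $\C(\Gr(2,m))^{(\C^*)^m}$ of transcendence degree $m-3$, is the function field of the base $Z_\omega$ (compare Theorem~\ref{thm:Z}(4)). You have thus implicitly assumed that every meromorphic function on $N_{\omega,\tau}$ is pulled back from $Z_\omega$.

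The correct field is that of rational $B\times(\C^*)^m$-invariants on $\C^{2\times m}$. On the column directions $z_j=x_j/y_j$, $B$ acts by the affine maps $z\mapsto(az+b)/d$, since it fixes $\infty=[1:0]\in\PP^1$. On the dense open set where all $y_j\neq0$ and $z_0\neq z_1$, first normalize $y_j=1$ by the column torus, then $z_0=0$, $z_1=1$ by the affine group. The residual group is the common scalar kernel, and the remaining coordinates are $t_j=(z_0-z_j)/(z_0-z_1)=p_{0j}y_1/(p_{01}y_j)$ for $2\le j\le m-1$. This rational slice gives $\C(t_2,\ldots,t_{m-1})$, purely transcendental of degree $m-2=2m-(m+3)+1$. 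The converse, that the $t_j$ descend, follows from local holomorphic sections of $U_\omega\to N_{\omega,\tau}$. The extra generator relative to $Z_\omega$ is a cross-ratio with the Borel-fixed point $\infty$; on each Hopf fibre it restricts to a coordinate of $B\backslash GL(2,\C)\cong\PP^1$. It is not a quotient of unequal bidegrees ``compensated through $\tau$''. Your own observation that $(a,b)\mapsto(a-b)+2\tau(a+b)$ is injective for $\operatorname{Im}\tau>0$ already excludes such functions.
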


\begin{proof}[Proof of (1)--(5) and the exclusions]
The proof of Proposition~\ref{prop:N5} applies to any distinct unit
weights without antipodal pairs. Indeed, a nonzero stabilizing
$X\in\mathfrak{su}(2)$ would force the signed weights
$\varepsilon_j\omega_j$ on the support to lie on a line. These signed
weights are distinct points of a circle, so there are at most two.
The balancing equation cannot hold on one or two non-antipodal
weights. If $X=0$, a nonzero remaining stabilizer would force the
support onto a line through the origin, again impossible.
Nonemptiness follows from (G) and the convex minimization in
Lemma~\ref{lem:U}. Thus Theorem~\ref{thm:irrational} applies.

In the chart $c=1$ the three fundamental fields are
$$
D_0M=(H+2\tau I)M,\qquad D_1M=M\operatorname{diag}(\omega_j),
\qquad D_E(M)=EM.
$$
The proof of Lemma~\ref{lem:U} is unchanged, and
$N_{\omega,\tau}=U_\omega/(B_\tau\times C_\omega)$, where
$B_\tau=\exp(\C(H+2\tau I)\oplus\C E)$.
The $C_\omega$-action on the stable Grassmannian is free and proper:
its strictly convex potential gives a global smooth slice $Z_\omega$.
The map from the frame bundle to the Grassmannian therefore descends
to a holomorphic bundle with fibre $B_\tau\backslash GL(2,\C)$,
a Hopf surface. This also verifies (3), including local triviality.

Condition (G) and relative transversality to a Whitney stratification
show that $U_\omega$ is $4$-connected. The acting group is
contractible, so $N_{\omega,\tau}$ is $4$-connected as well. In
particular $H^2(N_{\omega,\tau};\R)=0$. On a compact manifold of
positive even dimension an exact two-form cannot be symplectic,
since its top power has integral zero by Stokes' theorem. This proves
(2), strengthening the non-K\"ahler conclusion.

For (4), assume also (R). Hartogs, Scheja and the exponential sequence give
$H^1(U_\omega,\mathcal O)=0$ and $\operatorname{Pic}(U_\omega)=0$.
Lift a projective map to entire functions $s_0,\ldots,s_r$ without
common zeros on $U_\omega$. As in Theorem~\ref{thm:N5}, an entire
change of trivialization makes them $D_0$-eigenfunctions. Here the
monomial eigenvalues are
$$
(a-b)+2\tau(a+b),\qquad a,b\geq0.
$$
Their imaginary parts are $2\operatorname{Im}\tau(a+b)$, so the
inverse on positive-degree terms preserves convergence, and the
$s_i$ are polynomials of one common row bidegree. Commutation with
$D_1$ makes them eigenfunctions of $D_1$ with one common eigenvalue.
Condition (R), together with their common total degree, then gives
one common column multidegree. Condition (D) forces every component
of that multidegree to be zero. Thus all $s_i$ are constant.

For (5), the lifting argument in Theorem~\ref{thm:N5} again reduces
all holomorphic vector fields to linear normalizers: off degree one,
the relevant eigenvalues have imaginary parts
$2\operatorname{Im}\tau(d-1)$. Distinctness of the $\omega_j$ gives
$$
\mathfrak n
=\{B\otimes I+I\otimes P:B\in\mathfrak b, P\text{ diagonal}\},
\qquad \dim_\C\mathfrak n=m+2.
$$
Its derived algebra is $\C E\subset\mathfrak g_{\omega,\tau}$, so
$\mathfrak n/\mathfrak g_{\omega,\tau}$ is abelian of dimension $m-1$.
After removing the unipotent part and the common scalar kernel, its
group is $(\C^*)^{m+1}/\exp L$, where $L$ is the complex plane
spanned by the two diagonal generators. The logarithmic-absolute-value
map is injective on $L$: its kernel would give
$\operatorname{Re}s=0$ and
$\operatorname{Re}(2\tau s+w\omega_j)=0$ for every $j$; affine spanning
of the weights gives $w=s=0$. It follows that $\exp L$ is closed,
contractible and transverse to the compact torus. The quotient has
universal cover $\C^{m-1}$ and period lattice of rank $m+1$.
As in Section~\ref{sec:new}, a generic matrix with three distinct
column directions rules out any additional kernel. This calculation
uses distinctness and affine spanning of the weights, but not (R).

Finally $m+1<2m-3$ for $m\geq5$, excluding a maximal torus action;
$m-1<2m-3$ excludes homogeneity. If (R) also holds, a
positive-dimensional projective orbifold base would yield a
nonconstant holomorphic map to projective space, contrary to (4).
A zero-dimensional base would make the total space a compact Lie
group with a Samelson structure or a finite free quotient of one;
$4$-connectedness excludes both by the argument in
Corollary~\ref{cor:new}(2).
\end{proof}

\subsection{Explicit weight families}

\begin{proposition}[A pentagon and additional weights]\label{prop:seed}
For every $m\geq5$, take
$\omega_j=e^{2\pi i j/5}$ for $0\leq j\leq4$ and choose the remaining
$m-5$ weights on the unit circle, distinct and without antipodal pairs.
Then (G), and hence (D), hold. Outside a countable union of proper real-analytic
subsets of the additional angle parameters, (R) holds as well.
Consequently Theorem~\ref{thm:families} produces examples in every
odd complex dimension at least seven.
\end{proposition}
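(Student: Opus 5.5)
The plan is to verify (G) by the matroid-stratum count of Lemma~\ref{lem:codim}, using only the five pentagon weights to control instability. Condition (R) will follow from a countability argument in the extra angles, and the dimension count from Theorem~\ref{thm:families}(1).

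\emph{Condition (G).} The complement of $U_\omega$ in $\C^{2\times m}$ has two parts.

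The first part is the matrices of rank at most one. They have codimension $m-1\geq4$.

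The second part is the preimage of the unstable locus of $\Gr(2,m)$ under the submersion $M\mapsto W$. As in Lemma~\ref{lem:codim}, this locus is a union of matroid strata. Such a stratum is given by a set $L$ of zero columns and a partition of the remaining $n'=m-|L|$ indices into $k$ classes of proportional columns. The stratum has codimension $(n'-k)+2|L|$ in $\Gr(2,m)$.

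The description of $P_\omega(W)$ from Lemma~\ref{lem:codim} carries over verbatim. So $W$ is unstable if and only if some unit vector $u$ satisfies $v_{(1)}(u)+v_{(2)}(u)\leq0$, where $v_A(u)=\max_{j\in A}u\cdot\omega_j$. Let $X$ be a class attaining $v_{(1)}(u)$. Then every index outside $X\cup L$ satisfies $u\cdot\omega_j\leq-v_X(u)$.

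Now assume $L$ contains no pentagon index. Some pentagon vertex $\zeta^j$ has $u\cdot\zeta^j\geq\cos(\pi/5)$, so $v_X(u)\geq\cos(\pi/5)$. Hence every index outside $X$ has its weight within angle $\pi/5$ of $-u$. At most one pentagon vertex can lie there. Two would require $-u$ to bisect an edge, and then $v_X(u)=1$ makes $u\cdot\omega_j\leq-1$ impossible. So $X$ contains at least four pentagon indices, $n'-k\geq3$, and the codimension is at least $3$.

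This argument ignores the extra weights entirely. They can only sit inside classes or inside $L$, and either way they do not lower the codimension.

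If $L$ meets the pentagon in exactly one index, I redo the gap argument of Lemma~\ref{lem:codim}. The four remaining vertices give $v_X(u)\geq\cos(2\pi/5)$. If all classes meeting the pentagon were singletons, three pentagon indices outside $X$ would lie within angle $\delta\leq2\pi/5$ of $-u$. That forces $-u$ to be a vertex, so $\delta\leq\pi/5$, a contradiction. Hence some class is not a singleton, so $n'-k\geq1$ and the codimension is $\geq1+2=3$. If $|L|\geq2$, the codimension is already $\geq4$.

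I expect this case analysis to be the only delicate point. The key remark that makes it work is that instability depends only on the position of the top class relative to the pentagon.

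\emph{Condition (R).} The parameters are the $m-5$ extra angles, ranging over the open set where all weights are distinct and no two are antipodal. For each nonzero $a\in\Z^m$ with $\sum_ja_j=0$, consider the real-analytic set
\[
\Bigl\{\textstyle\sum_ja_j\omega_j=0\Bigr\}.
\]

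If $a$ is supported on the pentagon indices, this set is empty. Indeed, the integer relations $\sum a_j\zeta^j=0$ are the multiples of $(1,\ldots,1)$, and these have nonzero sum unless $a=0$.

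Otherwise some extra coefficient $a_j\neq0$. The derivative of the relation with respect to $\theta_j$ is $ia_j\omega_j\neq0$, so the set is a proper real-analytic subset. There are countably many such $a$, which gives the exceptional countable union.

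\emph{Conclusion.} The complement of a countable union of proper real-analytic subsets is nonempty. So for every $m\geq5$ there are admissible weights satisfying (G) and (R), and (D) follows from (G) as noted before Theorem~\ref{thm:families}. That theorem then yields manifolds of complex dimension $2m-3$, which runs over all odd integers $\geq7$.
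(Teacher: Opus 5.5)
Your proof is correct. For (R) it is the paper's countability argument. Your observation that $\partial_{\theta_j}\sum_i a_i\omega_i=ia_j\omega_j\neq0$ makes explicit why each relation involving an additional angle cuts out a nowhere dense set.

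For (G) you take a different route. The paper never reruns the angular estimates. It takes a rank-two stratum of codimension at most two in $\C^{2\times m}$ and restricts it to the five pentagon columns. Deleting a nonzero column either removes its class or lowers $m-k+|L|$ by one, and deleting a zero column lowers it by two. The restriction also cannot have rank at most one, a condition of codimension four. So it is a rank-two stratum of codimension at most two in $\C^{2\times5}$, hence stable by Lemma~\ref{lem:codim}. Since its Pl\"ucker support is contained in that of the full matrix, the full polygon $P_\omega(W)$ contains the restricted one, and the full matrix is stable as well. That argument uses Lemma~\ref{lem:codim} as a black box and proves more: (G) for any seed configuration is inherited by every enlargement of it.

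You instead repeat the support-function and arc-counting analysis of Lemma~\ref{lem:codim} with $m$ columns. You note that the bounds $v_{(1)}(u)\geq\cos(\pi/5)$ (or $\cos(2\pi/5)$ when one pentagon column vanishes) and the count of vertices near $-u$ involve only pentagon indices. Meanwhile $\sum_A(|A|-1)+2|L|$ can only grow when extra indices join classes or $L$. This is self-contained and keeps the geometry visible, but it is specific to the pentagon seed.

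The one point the paper records that you leave implicit is that the unstable locus is closed, since stability persists when the Pl\"ucker support grows. The later Hartogs and Scheja arguments use this.
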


\begin{proof}
Rank-two column configurations are stratified by their $l$ zero
columns and by the partition of the others into $k$ proportionality
classes. A stratum has codimension $m-k+l$ in the full matrix space:
choose $k$ lines in $\C^2$, and a nonzero vector on its assigned line
for each of the $m-l$ nonzero columns. Restricting to the first five
columns cannot increase this codimension, as deleting a nonzero
column either removes its class or decreases the codimension by one,
and deleting a zero column decreases it by two. Moreover a stratum
of codimension at most two cannot restrict to rank at most one on
these five columns, since that determinantal condition has codimension
four. The restricted rank-two stratum therefore has codimension at
most two, and is stable by Lemma~\ref{lem:codim}. Its Pl\"ucker
support is contained in that of the full matrix, so the full matrix
is stable. Rank at most one has codimension $m-1\geq4$.
This proves (G). The unstable set is closed algebraic because it is
a finite union of closed conditions specifying vanishing minors;
stability is preserved when the support increases.

For (D), each pentagon index can be omitted by choosing a rotated
balanced triangle in the other four vertices; all additional columns
may be set to zero. For an additional index, any balanced triangle
in the pentagon suffices. The realization argument of
Proposition~\ref{prop:surjective-N5} supplies the corresponding
rank-two matrices.

The five pentagon weights already satisfy (R): an integral polynomial
of degree at most four vanishing at $e^{2\pi i/5}$ is a multiple of
$1+x+\cdots+x^4$, and its coefficients cannot sum to zero unless
that multiple is zero. Each new possible affine integral relation
involves at least one additional angle with nonzero coefficient,
and is not identically zero as that angle varies. There are countably
many relations, so their zero sets have empty interior and measure
zero. Their complement is dense and of full measure.
\end{proof}

\begin{example}[An explicit sequence in every odd dimension]
\label{ex:explicit-seed}
The generic choices in Proposition~\ref{prop:seed} can be replaced by
$$
\omega_j=e^{2\pi i j/5}\quad(0\leq j\leq4),\qquad
\omega_{4+\ell}=e^{i\ell}\quad(1\leq\ell\leq m-5).
$$
By the Hermite--Lindemann theorem, $t=e^i$ is transcendental
\cite{Baker}. An integral affine relation among these weights gives
a polynomial in $t$ with algebraic coefficients, whose constant
coefficient is a combination of fifth roots of unity. Transcendence
forces every positive-degree coefficient to vanish. The remaining
relation on the pentagon, together with the zero coefficient sum,
is trivial. Thus (R) holds. The same argument shows that these
weights are distinct and have no antipodal pairs: such an equality
would make a positive power of $t$ algebraic.
Taking $\tau=i$ yields explicit examples with invariants
$$
\begin{array}{c|c|c|c}
m&\dim_\C N&a(N)&\dim_\R T_{\max}\ \text{in }\operatorname{Aut}^0(N)\\\hline
5&7&3&6\\
6&9&4&7\\
7&11&5&8\\
8&13&6&9\\
m&2m-3&m-2&m+1
\end{array}
$$
All of them are $4$-connected, and all holomorphic maps from them to
projective spaces are constant.
\end{example}

\begin{example}[Prime polygons]\label{ex:prime-polygons}
There is also an explicit cyclotomic series without generic choices:
for every odd prime $p\geq5$ take $m=p$ and
$\omega_j=e^{2\pi i j/p}$. Then (G) and (R), and hence (D), hold.
The resulting dimensions are $7,11,19,23,31,\ldots$.
To verify (G), it suffices to consider rank-two strata of codimension
at most two. If no column is zero and the stratum is unstable,
choose a unit direction $u$ such that the two largest class maxima
of $u\cdot\omega_j$ have nonpositive sum. Let $X$ be a class containing
a vertex closest to $u$, at angular distance $\delta\leq\pi/p$.
Every vertex outside $X$ must lie within angle $\delta$ of $-u$.
This arc contains at most one vertex: two would force
$\delta=\pi/p$ and $-u$ to bisect consecutive vertices, whereas for
odd $p$ this makes $u$ itself a vertex and $\delta=0$.
Thus $|X|\geq p-1$, and the codimension is at least $p-2\geq3$,
a contradiction. If there is one zero column, codimension at most
two forces all other classes to be singletons. For $p\geq7$, any
open semicircle contains at least $(p-1)/2$ vertices before deletion,
hence at least two afterwards; the sum of the two largest projections
is strictly positive. The case $p=5$ is Lemma~\ref{lem:codim}.
Two zero columns already have codimension at least four.
Condition (D) follows by taking an enclosing triangle avoiding any
prescribed vertex: after deleting one vertex the largest angular gap
is $4\pi/p<\pi$, so the origin is interior to the remaining convex
hull; absence of antipodal pairs gives such a triangle.
Finally (R) follows from the prime cyclotomic polynomial.
The same geometric arguments apply to any regular odd polygon, but
for composite orders (R) fails and the meromorphic-function conclusion
requires a different calculation.
\end{example}

\begin{remark}[Parameters and nested examples]
The geometric property (G), and consequently (D), persists under sufficiently small
perturbations of the weights on the circle. Indeed there are finitely
many rank-two support types of codimension at most two; for each of
them the origin is strictly interior to the corresponding polygon.
The finitely many witnesses for (D) also retain their strict
stability under small perturbations; this separately verifies the
persistence of (D).
The stabilizer argument persists as long as weights remain distinct
and non-antipodal. The smooth compact level sets therefore form a
locally trivial smooth family; $\tau$ also varies freely in the upper
half-plane. Condition (R) is imposed on a dense full-measure subset,
not on an open set. We do not assert that different parameters always
give non-biholomorphic manifolds.

For the pentagon-seeded family, setting the last columns to zero
gives closed holomorphic embeddings of the smaller examples into
the larger ones, with the same $\tau$. In particular each such example
contains the original seven-dimensional manifold. Thus the family is
obtained by enlarging a representation, not by taking products.
\end{remark}

\subsection{Meromorphic functions and algebraic dimension}

We finish the proof of Theorem~\ref{thm:families} by computing the
function field. Write $v_j=(x_j,y_j)^t$ and
$p_{ij}=x_i y_j-x_j y_i$. The following functions are invariant under
all three generators and hence descend meromorphically:
$$
 t_j=\frac{p_{0j}y_1}{p_{01}y_j},\qquad 2\leq j\leq m-1.
$$
On the open set where $y_j\neq0$ and $p_{01}\neq0$, they are
$(z_0-z_j)/(z_0-z_1)$, with $z_j=x_j/y_j$.

\begin{proposition}\label{prop:function-field}
Under (G) and (R), with $\operatorname{Im}\tau>0$,
$$
\mathcal M(N_{\omega,\tau})=\C(t_2,\ldots,t_{m-1}).
$$
These $m-2$ functions are algebraically independent. In particular the
original pentagon example has algebraic dimension three. The
meromorphic map to $(\PP^1_{\mathbb C})^{m-2}$ given by the $t_j$ realizes its
algebraic reduction at the level of function fields; it cannot be
replaced by a nonconstant everywhere holomorphic map to a projective
variety.
\end{proposition}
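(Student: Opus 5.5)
The plan has three steps: lift meromorphic functions on $N_{\omega,\tau}$ to the affine model $U_\omega$, reduce them to rational functions that are invariant under $B_\tau\times C_\omega$, and then compute that field of invariants.

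\emph{Step 1: lifting.} Let $h$ be a meromorphic function on $N_{\omega,\tau}$ and $\tilde h=h\circ p$ its pullback to $U_\omega$. Since $\operatorname{Pic}(U_\omega)=0$ and the complement has codimension $\geq3$ by (G), I will write $\tilde h=s_1/s_2$. Here $s_1,s_2$ are entire on $\C^{2\times m}$ and coprime in the factorial ring of entire functions; Hartogs extends them and the divisor of $\tilde h$, and the trivial Picard group turns the polar divisor into a principal one. Invariance of $\tilde h$ under $\widetilde G$ together with coprimality gives $s_i(gM)=\chi(g,M)s_i(M)$ with a common non-vanishing multiplier. I then repeat the argument of Theorem~\ref{thm:N5}(3) word for word. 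Solve $D_0\psi=\kappa_0-\kappa_0(0)$ using the eigenvalue bound $|\operatorname{Im}|\geq2\operatorname{Im}\tau(a+b)$. After the change $s_i\mapsto s_ie^{-\psi}$, both $s_i$ become polynomials of one common row bidegree $(a,b)$. As there, $\kappa_E=0$ and $\kappa_1$ is constant, and (R) forces one common column multidegree $n=(n_0,\dots,n_{m-1})$. So $\tilde h$ is a quotient of two polynomials that are semi-invariant with the same weights under the row torus and the column torus, and $E$-invariant. Condition (D) is no longer needed here, because a quotient is allowed to have poles.

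\emph{Step 2: reduction to a classical invariant field.} Combining these, $\tilde h$ is a rational function that is invariant under left multiplication by the upper-triangular unipotent group and homogeneous of degree $0$ in each column and in each row. Equivalently, it is a rational function on $(\PP^1)^m$ (column directions) that is invariant under the Borel subgroup $\{z\mapsto\alpha z+\beta\}$ acting on the ratios $z_j=x_j/y_j$. The row scalings act through $z\mapsto \alpha z$ and $E$ acts by translations. The affine group acts on an open set of $\C^m$ with $z_1\neq z_0$, and the slice $z_0=0,\ z_1=1$ meets each generic orbit exactly once. Hence the invariant field is $\C(z_2,\dots,z_{m-1})$ restricted to the slice, which is exactly $\C(t_2,\dots,t_{m-1})$, since $t_j=(z_0-z_j)/(z_0-z_1)$. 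On the slice the $t_j$ are the coordinates $z_j$, so they are algebraically independent.

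\emph{Step 3: consequences and the main obstacle.} With the field computed, the algebraic dimension is $m-2$, equal to $3$ for the pentagon. The statement that the map to $(\PP^1)^{m-2}$ cannot be made holomorphic follows from Theorem~\ref{thm:families}(4). The step I expect to be the main obstacle is Step 1: making the quotient representation canonical. I have to check that coprimality survives the exponential gauge change, and that the semi-invariance of $s_1$ and $s_2$ yields the same characters rather than merely characters with the same ratio. The point is that only the ratio matters, so I will apply the eigenfunction normalization to $s_1$ and $s_2$ simultaneously using the common $\kappa$. I would also verify that the $t_j$ are genuinely invariant under $D_1$: the column weights of $p_{0j}y_1$ and of $p_{01}y_j$ both equal $e_0+e_1+e_j$, so they match.
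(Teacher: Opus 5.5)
Your proposal is correct and follows essentially the paper's route: the Picard-trivial lift to $U_\omega$, the gauge change by the $D_0$-eigenvalue half-plane to get polynomials of one row bidegree, (R) for a common column multidegree, and the slice $z_0=0$, $z_1=1$ for the invariant field of the upper Borel times the column torus; the only variation is that you run the gauge change globally on entire functions, as in Theorem~\ref{thm:N5}(3), whereas the paper works with a coprime power-series representation at the origin. Two small repairs are needed: the ring of entire functions on $\C^{2m}$ is not factorial, so you should say instead that $s_1,s_2$ are coprime in every local ring $\mathcal O_{M}$, $M\in U_\omega$, which your principal-polar-divisor construction gives and which is all the common-multiplier step uses; and for the reverse inclusion you should state that the invariant functions $t_j$ actually descend to meromorphic functions on $N_{\omega,\tau}$, via local holomorphic sections of the principal bundle $U_\omega\to N_{\omega,\tau}$.
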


\begin{proof}
A meromorphic function on $N_{\omega,\tau}$ pulls back to an invariant
meromorphic function $f$ on $U_\omega$. Since
$\operatorname{Pic}(U_\omega)=0$, its effective polar divisor is the
divisor of a holomorphic function $B_0$ on $U_\omega$.
Then $A_0=fB_0$ is holomorphic there. Hartogs extends both functions
to $\C^{2m}$, extending $f$ meromorphically as $A_0/B_0$.
Near the origin write $f=A/B$ with coprime convergent
power series. The identity $D_0f=0$ and factoriality of the local
analytic ring imply
$$
D_0 A=hA,\qquad D_0 B=hB
$$
for one holomorphic germ $h$. On a monomial of positive total degree
$d$, $D_0$ has eigenvalue of modulus at least
$2\operatorname{Im}\tau\,d$. Therefore
$D_0\psi=h-h(0)$ has a convergent solution with zero constant term:
in its power series, division of a degree-$d$ coefficient by the
corresponding eigenvalue multiplies its absolute value by at most
$(2\operatorname{Im}\tau\,d)^{-1}$. Absolute convergence on a
smaller polydisc is therefore preserved.
Replacing $A,B$ by $e^{-\psi}A,e^{-\psi}B$ gives two eigenfunctions
with the same eigenvalue. The eigenvalue formula forces both to be
polynomials of a common row bidegree. Hence $f$ is rational near the
origin and, by analytic continuation, on the whole affine space.
Cancel any common polynomial factors and continue to denote the
coprime numerator and denominator by $A,B$.

For any diagonal degree-preserving derivation $D$ with $Df=0$,
coprimality implies $DA=h_DA$, $DB=h_DB$ with polynomial $h_D$;
degree comparison makes $h_D$ constant. Applying this to $D_0,D_1$
shows that $A,B$ have the same row bidegree and, by (R), the same
column multidegree. Thus $f$ is invariant under all row-diagonal
and column-diagonal transformations. It is also invariant under
$E$, since it came from the quotient. Consequently it is invariant
under the upper Borel on the left and the full column torus on the
right.

The rational invariant field of this action is elementary. On the
dense open set where every $y_j$ is nonzero, column scalings normalize
$y_j=1$. The remaining effective Borel action on $z_j=x_j/y_j$ is
the common affine transformation $z_j\mapsto az_j+b$. For
$z_0\neq z_1$, normalize these two coordinates to $0$ and $1$.
The remaining normalized coordinates are exactly $t_2,\ldots,t_{m-1}$,
and are free coordinates on a rational quotient. This proves both
field generation and algebraic independence. Conversely all these
rational invariants descend to meromorphic functions because the
quotient has local holomorphic sections. Theorem~\ref{thm:families}(6) follows.

The resulting meromorphic map is dominant and induces an isomorphism
of function fields, which is the asserted algebraic reduction.
The impossibility of a nonconstant holomorphic projective map is
Theorem~\ref{thm:families}(4), so indeterminacy is essential.
\end{proof}

\begin{remark}[Holomorphic maps versus meromorphic maps]
There is no contradiction between parts (4) and (6): the functions
$t_j$ generally have indeterminacy where numerator and denominator
vanish. In particular the absence of holomorphic maps to projective
space must not be interpreted as algebraic dimension zero.
\end{remark}

\begin{remark}[Products and even dimensions]\label{rem:products}
The manifolds of Theorem~\ref{thm:families} have odd dimension. The
properties above pass formally to products, which gives manifolds
with the same properties in even dimensions. Indeed, let $N_1,\ldots,
N_r$ be manifolds as in Theorem~\ref{thm:families} whose weights satisfy (G) and (R). Their product is
$4$-connected, so it admits no symplectic form; a holomorphic map from
it to a projective space is constant on each factor slice, hence
constant; and, since compact connected complex manifolds have only
constant holomorphic functions, its Lie algebra of holomorphic vector
fields is the direct sum of those of the factors, so the maximal
compact subgroup of the identity component of its automorphism group
has dimension $\sum_i(m_i+1)<\sum_i(2m_i-3)$, and there is no maximal
torus action. Taking a factor of dimension seven and another of any
odd dimension at least seven gives every even dimension at least
fourteen, hence, together with the families themselves, every complex
dimension at least thirteen. We record this only to cover even
dimensions; the products are of course not new in any further sense.
\end{remark}

\subsection{The bases \texorpdfstring{$Z_\omega$}{Z omega}}

The bases of the Hopf-surface fibrations of
Theorem~\ref{thm:families}(3) satisfy analogous exclusions from the
specific classes considered above.

\begin{theorem}[Grassmannian quotient manifolds]\label{thm:Z}
Assume (G), and let $Z_\omega=\Gr(2,m)^s/C_\omega$, a
compact complex manifold of dimension $2m-5$. Properties
\textup{(2)} and \textup{(3)} below hold. If in addition (R) holds,
then \textup{(1)} and \textup{(4)} hold as well:
\begin{enumerate}
\item every holomorphic map from $Z_\omega$ to a projective space is
constant;
\item the Lie algebra of holomorphic vector fields on $Z_\omega$ is
abelian of dimension $m-2$, and the maximal compact subgroup of
$\operatorname{Aut}^0(Z_\omega)$ is a torus of real dimension $m-1$;
\item its first homotopy groups are
$$
\pi_1(Z_\omega)=\pi_3(Z_\omega)=0,\qquad
\pi_2(Z_\omega)\cong\pi_4(Z_\omega)\cong\Z;
$$
\item its meromorphic function field is
$$
\mathcal M(Z_\omega)=\C(q_3,\ldots,q_{m-1}),\qquad
q_j=\frac{p_{0j}p_{12}}{p_{02}p_{1j}},
$$
where the $q_j$ are algebraically independent. In particular,
$a(Z_\omega)=m-3$.
\end{enumerate}
Under (G) alone, $Z_\omega$ admits no maximal torus action and is not
homogeneous. Under (G) and (R), it is also not a Samelson-fibre bundle
over a projective orbifold, even allowing finite free quotients of
the model fibre.
\end{theorem}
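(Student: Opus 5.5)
We will work throughout with the presentation $Z_\omega=U_\omega/(GL(2,\C)\times C_\omega)$, since $\Gr(2,m)^s=U_\omega/GL(2,\C)$. By Lemma~\ref{lem:U} (its proof holds verbatim for general $\omega$), $C_\omega\times Z_\omega\to\Gr(2,m)^s$ is a diffeomorphism, where $Z_\omega$ is viewed as the compact slice $\{W:\sum_j\omega_j(\Pi_W)_{jj}=0\}$. Hence $Z_\omega$ is a compact complex manifold of dimension $2(m-2)-1=2m-5$, and $U_\omega\to Z_\omega$ is a holomorphic principal bundle for the group $G_Z=GL(2,\C)\times C_\omega$ modulo the common scalar kernel. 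In contrast with $\widetilde G$, this group is not contractible, so the topology and the lifting arguments must be adapted.

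\textbf{Homotopy groups (3).} Rather than using $U_\omega\to Z_\omega$ directly, we use the Hopf-surface bundle $N_{\omega,\tau}\to Z_\omega$ of Theorem~\ref{thm:families}(3). The fibre $B_\tau\backslash GL(2,\C)$ is diffeomorphic to $U(2)\cong S^1\times S^3$. The total space is $4$-connected by Theorem~\ref{thm:families}(2), so the long exact sequence gives $\pi_j(Z_\omega)\cong\pi_{j-1}(S^1\times S^3)$ for $j\le4$, and also $\pi_1(Z_\omega)=0$ because the fibre is connected. This yields $\pi_2=\Z$, $\pi_3=\pi_2(S^1\times S^3)=0$ and $\pi_4=\pi_3(S^3)=\Z$.

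\textbf{Vector fields (2).} Here we repeat the lifting argument of Theorem~\ref{thm:N5}(4) with vertical frame $\mathfrak{gl}(2)\oplus\C D_1$. Hartogs' theorem and $H^1(U_\omega,\mathcal O)=0$ give an entire lift $X$ of any holomorphic field. Its bracket with the Euler field $I\otimes I$ (the scalar in $\mathfrak{gl}_2$) is vertical, so the same degree-by-degree inversion of $\operatorname{ad}$ as before applies; the eigenvalues $d-1$ are now real but nonzero for $d\ne1$, with the same convergence estimates. This reduces $X$ to a linear lift. Next we compute the normalizer of $\mathfrak{gl}(2)\otimes I\oplus\C(I\otimes\Omega)$. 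Commuting with $I\otimes\Omega$, where the $\omega_j$ are distinct, forces a column-diagonal form, and normalizing $\mathfrak{gl}(2)\otimes I$ then forces $\mathfrak{gl}(2)\otimes I+I\otimes P$ with $P$ diagonal. The quotient is $\C^m/\C(1,\dots,1)/\C\omega$, which is abelian of dimension $m-2$. For the compact rank, the group is $(\C^*)^m/(\C^*\cdot\exp\C\omega)$, i.e.\ $(\C^*)^{m-1}$ modulo a complex line. That line injects under logarithmic absolute values because the $\omega_j$ together with $1$ affinely span, as in the proof of Theorem~\ref{thm:families}(5). The quotient therefore has universal cover $\C^{m-2}$ and a period lattice of rank $m-1$. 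As before, generic points with three column directions exclude any extra kernel.

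\textbf{Holomorphic maps (1) and function field (4).} Both follow the pattern of Theorem~\ref{thm:families}(4) and Proposition~\ref{prop:function-field}. We use $\operatorname{Pic}(U_\omega)=0$ to lift the map or function to entire sections. Invariance under the scalar $\C^*$ and under $SL(2)$ replaces the $D_0$-normalization: we obtain $\kappa$'s for the Euler field and $D_1$, and we remove the Euler one by solving $\mathcal E\psi=\kappa-\kappa(0)$, where the eigenvalues are the degrees $d$. The sections then become homogeneous polynomials. Condition (R) gives them a common column multidegree, and (D) forces every column degree to vanish, so they are constant. For (4) we obtain rational invariants of $GL(2)$ times the full column torus. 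These are cross-ratio functions of the $m$ points $z_j=x_j/y_j\in\PP^1$ under $PGL_2$, namely the field generated by $q_3,\dots,q_{m-1}$ after normalizing $z_0,z_1,z_2$ to $0,\infty,1$. These are algebraically independent, giving transcendence degree $m-3$.

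\textbf{Exclusions.} These follow exactly as in Theorem~\ref{thm:families}. For $m\ge5$ we have $m-1<2m-5$ and $m-2<2m-5$, which excludes a maximal torus action and homogeneity. A projective base is excluded by (1). A point base is excluded because $\pi_3(Z_\omega)=0$, $\pi_1=0$ and $\pi_2\ne0$ are incompatible with a Lie group or its finite quotients, since these have $\pi_2=0$. The main obstacle will be the lifting of vector fields: the inversion now uses the real Euler eigenvalues, and the vertical frame is no longer closed under $\operatorname{ad}\mathcal E$ in the naive way, so the degree-zero and degree-one bookkeeping will need care.
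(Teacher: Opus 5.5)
Your proof is correct. For (3) and for the exclusions it is the paper's argument; for (1), (2) and (4) you take a genuinely different and more uniform route.

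\emph{The paper's route.} It lifts only through the $C_\omega$-bundle $\Gr(2,m)^s\to Z_\omega$.
\begin{itemize}
\item For (2), a lifted field extends over $\Gr(2,m)$ by Hartogs. It is therefore automatically in $\mathfrak{sl}(m)$, because $\operatorname{Aut}^0(\Gr(2,m))=PGL(m,\C)$, and one only has to solve $[X,\Omega_\omega]=\alpha\Omega_\omega+\beta I$.
\item For (1), it uses three facts: line bundles on $\Gr(2,m)^s$ are restrictions of $\mathcal O(k)$; two $C_\omega$-linearizations differ by a character; and sections are Pl\"ucker polynomials with a single $C_\omega$-weight. Conditions (R) and (D) then kill that weight.
\item For (4), it pulls back to $N_{\omega,\tau}$, quotes Proposition~\ref{prop:function-field}, and adds $GL(2,\C)$-invariance.
\end{itemize}

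\emph{Your route.} You lift all the way to $U_\omega$, with structure group $GL(2,\C)\times C_\omega$. You then use the Euler field $\mathcal E$ in place of $D_0$ for the linearization of vector fields, the $\kappa$-normalization, and the rationality argument.

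\emph{Trade-off.} Your approach is self-contained: it needs neither the automorphism group of the Grassmannian nor linearization theory. The cost is a normalizer computation in $\mathfrak{gl}(2m)$, which you carry out correctly. You need, as in Theorem~\ref{thm:N5}(4), the preliminary remark that $[X,I\otimes\Omega]$ must vanish, because the vertical algebra lies in the centralizer of $I\otimes\Omega$. The paper's version of (2) is shorter, and its version of (4) reuses an earlier result rather than redoing it.

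\emph{Minor remarks.}
\begin{itemize}
\item $C_\omega$ meets the scalars trivially, since three distinct points of the circle are not collinear. So there is no common scalar kernel, and $GL(2,\C)\times C_\omega$ simply acts freely on $U_\omega$.
\item The obstacle you anticipate is not there. Since $\mathcal E$ commutes with every linear field, $[\mathcal E,hV]=(\mathcal E h)V$ for $V$ in the vertical frame, so verticality is preserved and $[X_d,\mathcal E]=(1-d)X_d$. The degree-zero part vanishes because $[X_0,\mathcal E]=X_0$ must equal the degree-zero part of a vertical field, which is zero. The Euler version is in fact easier than the $D_0$ version.
\item In (1) you use only $\mathcal E$ and $D_1$; the $SL(2)$-invariance plays no role.
\item For the point-base exclusion, $\pi_2(Z_\omega)\neq0$ alone suffices.
\end{itemize}
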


\begin{proof}
Write $\Gr=\Gr(2,m)$ and $\Gr^s=\Gr(2,m)^s$. Condition (G) implies that
$\Gr\setminus\Gr^s$ has complex codimension at least three, since its
preimage under the submersion $M\mapsto W$ lies in the complement of
$U_\omega$. Hence holomorphic functions, sections of line bundles and
vector fields on $\Gr^s$ extend to $\Gr$ (Hartogs), $H^1(\Gr^s,\mathcal
O)=H^1(\Gr,\mathcal O)=0$ (Scheja \cite{Scheja}), and $H^2(\Gr^s;\Z)=
H^2(\Gr;\Z)=\Z$. By the exponential sequence every holomorphic line
bundle on $\Gr^s$ is the restriction of some $\mathcal O(k)$, and every
holomorphic function on $\Gr^s$ is constant. By the proof of
Theorem~\ref{thm:families}(3), $C_\omega\cong\C$ acts freely and
properly on $\Gr^s$ with quotient $Z_\omega$.

(1) Assume (R). A holomorphic line bundle on $Z_\omega$ pulls back to a
$C_\omega$-linearized line bundle on $\Gr^s$, that is, to some $\mathcal
O(k)$ with a lift of the action. Two lifts differ by a holomorphic
cocycle $C_\omega\times\Gr^s\to\C^*$; since holomorphic functions on
$\Gr^s$ are constant, the cocycle is a character $w\mapsto e^{\kappa w}$.
Starting from the linearization induced by the diagonal torus of
$GL(m,\C)$, the sections of the line bundle on $Z_\omega$ are therefore
the sections $s\in H^0(\Gr,\mathcal O(k))$ with
$s\circ e^{w\Omega_\omega}=e^{\kappa w}s$, where $\Omega_\omega=
\diag(\omega_0,\ldots,\omega_{m-1})$. The space $H^0(\Gr,\mathcal
O(k))$ is spanned by polynomials of degree $k$ in the Pl\"ucker
coordinates, which are weight vectors of the diagonal torus; a
monomial $\prod p_{ij}^{n_{ij}}$ has weight $a\in\Z^m_{\geq0}$ with
$\sum_ja_j=2k$, and $e^{w\Omega_\omega}$ multiplies it by
$e^{w\sum_ja_j\omega_j}$. By (R), the conditions $\sum_ja_j=2k$ and
$\sum_ja_j\omega_j=\kappa$ determine $a$; so all sections have one
common weight $a$. By (D), for each $j$ there is a point of $Z_\omega$
whose row space is contained in $\{x_j=0\}$, at which all the
Pl\"ucker coordinates $p_{ij}$ vanish. If $f:Z_\omega\to\PP^r_{\mathbb C}$ is
holomorphic and $L=f^*\mathcal O(1)$, some section of $L$ is non-zero at
that point, so $a_j=0$. Hence $a=0$, $k=0$, the sections of $L$ are
constant, and $f$ is constant.

(2) Let $\xi$ be a holomorphic vector field on $Z_\omega$. The obstruction
to lifting it to $\Gr^s$ lies in $H^1(\Gr^s,\mathcal O)=0$, and the lift
extends to a holomorphic vector field on $\Gr$, that is, an element
$X\in\mathfrak{sl}(m)$, since $\operatorname{Aut}^0(\Gr(2,m))=PGL(m,\C)$.
As $X$ is related to $\xi$, $[X,\Omega_\omega]$ is tangent to
the orbits, $[X,\Omega_\omega]=f\Omega_\omega$ on $\Gr^s$ with $f$ holomorphic,
hence constant; in $\mathfrak{gl}(m)$ this reads
$[X,\Omega_\omega]=\alpha \Omega_\omega+\beta I$. The left side has zero diagonal,
since $\Omega_\omega$ is diagonal, so $\alpha\omega_j+\beta=0$ for all $j$;
as the $\omega_j$ are distinct, $\alpha=\beta=0$. Thus $X$ commutes with
$\Omega_\omega$, whose eigenvalues are distinct, and $X$ is diagonal.
Conversely diagonal fields commute with $\Omega_\omega$ and descend. Hence
the Lie algebra of holomorphic vector fields is the quotient of the
diagonal matrices by $\C I+\C \Omega_\omega$, abelian of dimension $m-2$.

The identity component $\operatorname{Aut}^0(Z_\omega)$ is the image of
the diagonal torus $(\C^*)^m$, and the kernel of its action is
$\C^*\cdot C_\omega$: the stabilizer in $(\C^*)^m$ of a plane with all
Pl\"ucker coordinates non-zero consists of the scalars, since
$d_id_j$ must be independent of $i<j$. Write $(\C^*)^m=\C^m/2\pi i\Z^m$;
then $\operatorname{Aut}^0(Z_\omega)\cong\C^{m-2}/\Gamma$, where $\Gamma$ is
the image of $2\pi i\Z^m$ in $\C^m/(\C\mathbf 1+\C\omega)$. If
$a\in\Z^m$ lies in $\C\mathbf 1+\C\omega$, say $a=x\mathbf 1+y\omega$,
then $a_j-a_0=y(\omega_j-\omega_0)$ for all $j$. If $y\neq0$,
all the $\omega_j$ would lie on the affine real line
$\omega_0+y^{-1}\R$, which meets the unit circle in at most two
points. This contradicts distinctness of the $m\geq5$ weights.
Thus $y=0$ and $a\in\Z\mathbf 1$; (R) is not needed here. Hence $\Gamma$ has
rank $m-1$. To check discreteness directly, work first in
$T_c=(\C^*)^m/\C^*$ and consider the image of the line $\C\omega$.
If its logarithmic absolute values vanish modulo real scalars, then
$\operatorname{Re}(w\omega_j)$ is independent of $j$. The distinct
unit weights span an affine real plane, so $w=0$. The
logarithmic-absolute-value map is therefore injective on this complex
line, considered as a real plane. Its exponential is closed and
contractible in $T_c$, since bounded logarithmic absolute values
force bounded parameters. The quotient is consequently a Hausdorff
complex Lie group with universal cover $\C^{m-2}$ and discrete period
group $\Gamma$ of rank $m-1$. Its maximal compact subgroup is
$\R\Gamma/\Gamma$, of real dimension $m-1$.

(3) Choose any $\tau$ with $\operatorname{Im}\tau>0$. By
Theorem~\ref{thm:families}, there is a locally trivial holomorphic
bundle $F_\tau\to N_{\omega,\tau}\to Z_\omega$, with
$F_\tau=B_\tau\backslash GL(2,\C)$ diffeomorphic to $U(2)$.
Its homotopy groups satisfy
$$
\pi_1(F_\tau)\cong\Z,\qquad \pi_2(F_\tau)=0,\qquad
\pi_3(F_\tau)\cong\Z.
$$
Since $F_\tau$ is connected and $N_{\omega,\tau}$ is $4$-connected,
the homotopy exact sequence gives $\pi_1(Z_\omega)=0$ and
isomorphisms $\pi_j(Z_\omega)\cong\pi_{j-1}(F_\tau)$ for
$2\leq j\leq4$. This proves (3).

(4) Assume (R). Pullback by $N_{\omega,\tau}\to Z_\omega$ embeds
$\mathcal M(Z_\omega)$ in $\mathcal M(N_{\omega,\tau})$.
By Proposition~\ref{prop:function-field}, every such pullback is a
rational function of the matrix entries, invariant under the upper
Borel and the full column torus. Because it comes from $Z_\omega$,
it is also invariant under all left multiplications by $GL(2,\C)$:
these preserve the row space and therefore its image in $Z_\omega$.
Conversely, every rational function invariant under $GL(2,\C)$ and
the column torus descends meromorphically to $Z_\omega$. Indeed it
first descends along the frame bundle to $\Gr(2,m)^s$, and then along
the principal $C_\omega$-bundle to $Z_\omega$; local holomorphic
sections give the descent in both steps.

The resulting rational invariant field is the field of ordered
$m$-tuples of points of $\PP^1_{\mathbb C}$ modulo $PGL(2,\C)$. On the dense
open set where the first three column directions are distinct,
there is a unique projective transformation sending them to
$0,1,\infty$. The remaining $m-3$ coordinates freely parametrize
this rational quotient. Equivalently its field is generated by the
cross-ratios
$$
q_j=\frac{(z_0-z_j)(z_1-z_2)}{(z_0-z_2)(z_1-z_j)}
    =\frac{p_{0j}p_{12}}{p_{02}p_{1j}},\qquad 3\leq j\leq m-1,
$$
where $z_j=x_j/y_j$ when $y_j\neq0$. These are algebraically
independent, and the preceding descent argument proves the asserted
identity of fields. Thus the meromorphic map defined by the $q_j$
realizes the algebraic reduction at the level of function fields.

Finally, a compact torus acting holomorphically and effectively lies
in $\operatorname{Aut}^0(Z_\omega)$, and $m-1<2m-5$ for $m\geq5$,
so no such action is maximal; $m-2<2m-5$ excludes homogeneity.
Under (R), a positive-dimensional projective orbifold base would
give a nonconstant holomorphic map to projective space, contradicting
(1). If the base were a point, $Z_\omega$ would be a compact Lie
group with a Samelson structure or a finite free quotient of one.
Every such space has $\pi_2=0$, contradicting (3).
\end{proof}

\section*{Declaration on the use of AI}
AI tools were used to assist with the writing of this paper and
the development of Sections~\ref{sec:new} and~\ref{sec:families}.
The author takes full responsibility for the mathematical content
and the final text.

\end{document}